        \crefname{subsection}{Subsection}{Subsections}
        \crefname{subsection}{Subsection}{Subsections}
\tikzset{every picture/.style=semithick}
    \tikzset{->-/.style={decoration={
        markings,
        mark=at position #1 with {\arrow{angle 60}}},postaction={decorate}}}
    \theoremstyle{plain}
        \newtheorem{theorem}{Theorem}[section]
        \newtheorem{corollary}[theorem]{Corollary}
        \newtheorem{lemma}[theorem]{Lemma}
        \newtheorem{proposition}[theorem]{Proposition}
    \theoremstyle{definition}
        \newtheorem{definition}[theorem]{Definition}
    \theoremstyle{remark}
        \newtheorem{remark}[theorem]{Remark}
    \title{Conjugacy in Rearrangement Groups of Fractals}
    \author{Matteo Tarocchi}
    \date{}
    \thanks
    {The author would like to thank James Belk, Miguel Del R\'io, Francesco Matucci and Davide Perego for useful discussions and advice and Matthew Zaremsky for conversations on Houghton groups.
    The author is also grateful to James Belk and Bradley Forrest for kindly providing the images of the Airplane, Basilica, Vicsek and Bubble Bath limit spaces depicted in \cref{fig_A_limit_space} and \cref{fig_limit_spaces} from their work \cite{belk2016rearrangement}.
    The image of the Airplane limit space was also colored by the author to produce \cref{fig_cells_A} and \cref{fig_action_alpha}.
    The author is a member of the Gruppo Nazionale per le Strutture Algebriche, Geometriche e le loro Applicazioni (GNSAGA) of the Istituto Nazionale di Alta Matematica (INdAM)}
    \address{Dipartimento di Matematica e Applicazioni, Universit\`a degli Studi di Milano-Bicocca, Ed. U5, Via R.Cozzi 55, 20125 Milano, Italy, EU}
    \email{\href{mailto:matteo.tarocchi.math@gmail.com}{matteo.tarocchi.math@gmail.com}}
\begin{document}

\begin{abstract}
    We describe a method for solving the conjugacy problem in a vast class of rearrangement groups of fractals, a family of Thompson-like groups introduced in 2019 by Belk and Forrest.
    We generalize the methods of Belk and Matucci for the solution of the conjugacy problem in Thompson groups $F$, $T$ and $V$ via strand diagrams.
    In particular, we solve the conjugacy problem for the Basilica, the Airplane, the Vicsek and the Bubble Bath rearrangement groups and for the groups $QV$ (also known as $QAut(\mathcal{T}_{2,c})$), $\tilde{Q}V$, $QT$, $\tilde{Q}T$ and $QF$, and we provide a new solution to the conjugacy problem for the Houghton groups and for the Higman-Thompson groups, where conjugacy was already known to be solvable.
    Our methods involve two distinct rewriting systems, one of which is an instance of a graph rewriting system, whose confluence in general is of interest in computer science.
\end{abstract}

\maketitle

\section*{Introduction}

The groups $F$, $T$ and $V$ were first introduced in 1965 in handwritten notes by Richard Thompson.
Soon $T$ and $V$ caught the attention of group theorists for being the first examples of infinite simple finitely presented groups (in truth, it was later shown that they are $F_\infty$ \cite{BROWN198745});
while the smaller sibling $F$ is not simple, it shows many other interesting properties of its own and is probably most known for the half-century old question regarding its amenability, which is still unresolved.
In general, there are countless topics where these groups show up: from analysis and dynamical systems to logic and cryptography.
A standard introduction to Thompson groups can be found in \cite{cfp}.

Thompson groups $F$, $T$ and $V$ can be described as groups of certain piecewise-linear homeomorphisms of the unit interval, the unit circle and the Cantor space, respectively.
In \cite{belk2016rearrangement} Belk and Forrest introduced Thompson-like groups that act by piecewise-linear homeomorphisms on limit spaces of certain sequences of graphs, which they called \textit{rearrangement groups of fractals}.
The first of these groups to have been studied were the Basilica rearrangement group $T_B$ in \cite{Belk_2015} and the Airplane rearrangement group $T_A$ in \cite{Airplane} (both names descend from the fractals on which the groups act: the Basilica and Airplane Julia sets).

The \textbf{conjugacy problem} is the decision problem of determining whether two given elements of a group $G$ are conjugate in $G$.
The \textbf{conjugacy search problem} is the problem of producing a conjugating element between two given elements of a group $G$ that are conjugate in $G$.
Both problems have been shown to be solvable in the three original Thompson groups:
Guba and Sapir solved it for the whole class of diagram groups, which includes $F$ \cite{guba1997diagram}; it was solved for $V$ by Higman \cite{higman1974finitely} and more generally for Higman-Thompson groups in \cite{HigmanConj}.
In \cite{Belk2007ConjugacyAD} Belk and Matucci produced a unified solution for the conjugacy problem in Thompson groups $F$, $T$ and $V$ with a technique that involves the use of special graphs called \textit{strand diagrams} as a way to represent the elements of the groups and, after being ``closed'', their conjugacy classes.
Generalizations of strand diagrams were also used to study conjugacy in almost automorphism groups of trees by Goffer and Lederle \cite{Goffer2019ConjugacyAD} and to solve the conjugacy problem in symmetric Thompson groups (a generalization of $F$, $T$ and $V$) by Aroca \cite{Aroca2018TheCP}.
Diagrams of this kind have also been used by Bux and Santos Rego to solve the conjugacy problem in the braided Thompson's group $V_{br}$ in a forthcoming work \cite{brVconj}, which has been announced in \cite{braidedreport} and is currently being written up.

Mainly inspired by \cite{Belk2007ConjugacyAD}, the aim of this paper is to introduce a version of strand diagrams that represent the elements of a rearrangement group, and to use them to solve the conjugacy problem under a certain condition on the replacement system that generates the fractal on which the group acts.
More precisely, we prove the following, which is a collection of \cref{THM conjugate,THM known groups}:

\addvspace{\topsep}
\noindent
\textbf{Main Theorem.}
\emph{Given an expanding replacement system whose replacement rules are reduction-confluent, the conjugacy problem and the conjugacy search problem are solvable in the associated rearrangement group. In particular, this method can be used to solve the conjugacy and the conjugacy search problems in the following groups: 
\begin{itemize}
    \item the Higman-Thompson groups $V_{n,r}$,
    \item the Basilica rearrangement group $T_B$, 
    \item the Airplane rearrangement group $T_A$ (through an adapted method, see below),
    \item the other rearrangement groups mentioned in \cite{belk2016rearrangement}, i.e., the Vicsek, the Bubble Bath and the Rabbit rearrangement groups,
    \item the Houghton groups $H_n$ from \cite{Houghton1978TheFC},
    \item the groups $QV$, $\tilde{Q}V$, $QT$, $\tilde{Q}T$ and $QF$ from \cite{QV1} ($QV$ was introduced in \cite{QV} under the name $QAut(\mathcal{T}_{2,c})$ and was also studied in \cite{QV1}),
    \item certain topological full groups of (one-sided) subshifts of finite type.
\end{itemize}
In the end, this solves the conjugacy problem for all rearrangement groups that have been studied so far.}
\par\addvspace{\topsep}

We recall that, for the Houghton groups, the conjugacy problem was already solved with different methods in \cite{HoughtonConjugacy}, and even the twisted conjugacy problem was solved in \cite{COX2017390}.
However, the observation that such groups can be realized as rearrangement groups is new and leads to an independent solution to the conjugacy problem.
Moreover, the solution to the conjugacy problem for the groups $QV$, $\tilde{Q}V$, $QT$, $\tilde{Q}T$ and $QF$ and for the rearrangement groups of the Airplane, the Basilica, the Vicsek, the Bubble Bath and the Rabbit are new, as far as the author knows.
We also remark here that, despite $QV$ being an extension of $V$, Theorem 3.1 from \cite{10.2307/40590898} does not apply to solve the conjugacy problem in $QV$ because the third condition is not met.
Finally, it is interesting to note that $T_B$ is a finitely generated group with solvable conjugacy problem that is not finitely presented (which was proved in \cite{TBnotfinpres}).

We employ strand diagrams, but unlike the solution of the conjugacy problem for $F,T,V$ in \cite{Belk2007ConjugacyAD} we need to introduce two different yet related rewriting systems, one of which is an instance of the so-called \textit{graph rewriting systems}.
We then need to find conditions to find uniquely reduced diagrams, for which we introduce the \textit{reduction-confluence} condition to achieve confluence in our instance of graph rewriting system.
We remark that establishing whether a graph rewriting system is confluent is in general an unsolvable problem and the search for sufficient conditions is actively studied in computer science (see \cite{Plump2005}).

All the groups in the Main Theorem above satisfy the reduction-confluence hypothesis with the exception of $T_A$, but this issue can be circumvented by adding one ``virtual reduction'' to the Airplane replacement system, as described in \cref{SUB non conf}.
More generally, as discussed at the end of this work, it seems that strand diagrams can be used to solve the conjugacy problem with this method whenever the reduction system can be made confluent by adding finitely many reduction rules that preserve termination of the rewriting system and the equivalence generated by the rewriting.
Determining when this can be achieved is beyond the scope of this paper, since it is related to problems in the more general setting of graph rewriting systems, as briefly discussed in \cref{SUB DPO}.

Before delving into rearrangement groups, note that many generalizations of the conjugacy problem have been studied for Thompson groups:
for example, the simultaneous conjugacy problem for $F$ is solved in \cite{simultaneous} and the twisted conjugacy problem and property $R_\infty$ have been studied in $F$ and $T$ in \cite{Bleak2007TwistedCC, Burillo2013TheCP, Gonalves2018TwistedCI}.
Also, \cite{Belk2021ConjugatorLI} makes use of the technology of strand diagrams to study the conjugator lengths in Thompson groups.
All of these questions can be asked about rearrangement groups as well, and maybe they can be tackled with the aid of strand diagrams.
Additionally, rearrangement groups can be seen as a special case of the recently defined \emph{graph diagram groups} \cite{graphdiagramgroups} and it would be interesting to develop a concept of strand diagram in that setting and investigate conjugacy.

Finally, we observe that when strand diagrams were introduced in \cite{Belk2007ConjugacyAD}, they were also used to study dynamics in Thompson groups.
In a similar fashion, a first version of the forest pair diagrams for rearrangements that is fully developed here was previously used to understand some of the dynamics of rearrangements in \cite{IG} where a generalization of Brin's \textit{revealing pairs} from \cite{Brin2004HigherDT} was introduced.
The strand diagrams for rearrangement groups introduced in this work are likely usable to shed further light on the dynamics of elements as done in \cite{Belk2007ConjugacyAD} for $F$, $T$ and $V$, but we leave such questions for future study.

\subsection*{Organization of the paper}

\cref{SEC rearrangement groups} provides all of the background that is needed on replacement systems and rearrangement groups, and also introduces forest pair diagrams;
\cref{SEC strand diagrams} is about strand diagrams, which is an alternative way to represent rearrangements;
\cref{SEC closed strand diagrams} explains what closed strand diagrams are and how to reduce them;
finally, \cref{SEC conjugacy problem} shows how equivalence classes of closed strand diagrams represent conjugacy classes and features a pseudo-algorithm that solves the conjugacy problem under some conditions on the replacement system.

\section{Rearrangement Groups}
\label{SEC rearrangement groups}

In this Section we briefly recall the basics of \textit{replacement systems}, their \textit{limit spaces} and their \textit{rearrangement groups} (Subsections \ref{SUB replacement systems}, \ref{SUB limit spaces}, \ref{SUB rearrangements}, respectively);
for an exhaustive introduction to the topic, we refer to the paper \cite{belk2016rearrangement} that introduced these groups, which goes into much more details than we will get to do.
We then describe many examples of replacement systems and their rearrangement groups (\cref{SUB examples}).
We also introduce \textit{forest pair diagrams} (\cref{SUB FPDs}), which is a new way of describing rearrangements and a natural generalization of \textit{tree pair diagrams} commonly used for Thompson groups.

\subsection{Replacement Systems}
\label{SUB replacement systems}

\begin{definition}\label{DEF replacement}
If $\mathrm{C}$ is a finite set of colors and $R = \{ X_i \mid i \in \mathrm{C} \}$ is a set of graphs colored by $\mathrm{C}$ and each equipped with a single initial and a single terminal vertices, we say that the pair $(R, \mathrm{C})$ is a set of \textbf{replacement rules} and each of the $X_i$'s is a \textbf{replacement graph}.

A \textbf{replacement system} is a set of replacement rules $(R, \mathrm{C})$ along with a graph $X_0$ colored by $\mathrm{C}$, called the \textbf{base graph}.
\end{definition}

For example, \cref{fig_replacement_A} depicts the so called Airplane.
We denote it by $\mathcal{A}$ and we will use it as a guiding example going forward.
For many more examples of replacement systems, limit spaces and rearrangement groups, we refer the reader to \cref{SUB examples}

We can expand the base graph $\Gamma$ by replacing one of its edges $e$ with the replacement graph $X_c$ indexed by the color $c$ of $e$, as in \cref{fig_exp_A}.
The graph resulting from this process of replacing one edge with the appropriate replacement graph is called a \textbf{simple graph expansion}.
Simple graph expansions can be iterated any finite amount of times, which generate the so-called \textbf{graph expansions} of the replacement system, such as the one in \cref{fig_exp_A_generic}.

\begin{figure}\centering
\begin{subfigure}{.375\textwidth}
\centering
\begin{tikzpicture}[scale=.85]
    \draw[->-=.5,blue] (-0.5,0) -- node[above]{L} (-2,0) node[black,circle,fill,inner sep=1.25]{}; \draw[->-=.5,blue] (0.5,0) -- node[above]{R} (2,0) node[black,circle,fill,inner sep=1.25]{};
    \draw[->-=.5,red] (0.5,0) node[circle,fill,inner sep=1.25]{} to[out=90,in=90,looseness=1.7] node[above]{T} (-0.5,0);
    \draw[->-=.5,red] (-0.5,0) node[black,circle,fill,inner sep=1.25]{} to[out=270,in=270,looseness=1.7] node[below]{B} (0.5,0) node[black,circle,fill,inner sep=1.25]{};
\end{tikzpicture}
\caption{The base graph.}
\label{fig_A_base}
\end{subfigure}
\begin{subfigure}{.6\textwidth}
    \centering
    \begin{subfigure}{.45\textwidth}
    \centering
    \begin{tikzpicture}[scale=1]
        \draw[->-=.5,red] (0,1) node[black,circle,fill,inner sep=1.25]{} node[black,above]{$\iota$} -- node[below]{a} (1.05,1);
        \draw[->-=.5,red] (1.05,1) -- node[below]{c} (2.1,1) node[black,circle,fill,inner sep=1.25]{} node[black,above]{$\tau$};
        \draw[->-=.5,blue] (1.05,1) node[black,circle,fill,inner sep=1.25]{} -- node[right]{b} (1.05,2) node[black,circle,fill,inner sep=1.25]{};
    \end{tikzpicture}
    \end{subfigure}
    \begin{subfigure}{.45\textwidth}
    \centering
    \begin{tikzpicture}[scale=1]
        \draw[->-=.5,blue] (0.7,1.35) -- node[below]{e} (0,1.35) node[black,circle,fill,inner sep=1.25]{} node[black,above]{$\iota$};
        \draw[->-=.5,blue] (1.4,1.35) -- node[below]{h} (2.1,1.35) node[black,circle,fill,inner sep=1.25]{} node[black,above]{$\tau$};
        \draw[->-=.5,red] (1.4,1.35) to[out=90,in=90,looseness=1.7] node[above]{f} (0.7,1.35);
        \draw[->-=.5,red] (0.7,1.35) node[black,circle,fill,inner sep=1.25]{} to[out=270,in=270,looseness=1.7] node[below]{g} (1.4,1.35) node[black,circle,fill,inner sep=1.25]{};
    \end{tikzpicture}
    \end{subfigure}
    \caption{The \textcolor{red}{red} and \textcolor{blue}{blue} replacement graphs, respectively.}
    \label{fig_A_replacement_rule}
\end{subfigure}
\caption{The Airplane replacement system $\mathcal{A}$.}
\label{fig_replacement_A}
\end{figure}

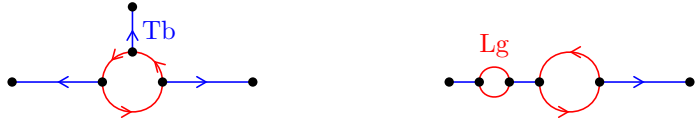
\begin{figure}\centering
\begin{subfigure}{.45\textwidth}\centering
\begin{tikzpicture}[scale=.8]
    \draw[->-=.5,blue] (-0.5,0) -- (-2,0) node[black,circle,fill,inner sep=1.25]{}; \draw[->-=.5,blue] (0.5,0) -- (2,0) node[black,circle,fill,inner sep=1.25]{};
    \draw[->-=.5,red] (0.5,0) to[out=90,in=0] (0,0.5);
    \draw[->-=.5,blue] (0,0.5) -- node[right]{Tb} (0,1.25) node[black,circle,fill,inner sep=1.25]{};
    \draw[->-=.5,red] (0,0.5) node[black,circle,fill,inner sep=1.25]{} to[out=180,in=90] (-0.5,0);
    \draw[->-=.5,red] (-0.5,0) node[black,circle,fill,inner sep=1.25]{} to[out=270,in=270,looseness=1.7] (0.5,0) node[black,circle,fill,inner sep=1.25]{};
\end{tikzpicture}
\end{subfigure}
\begin{subfigure}{.45\textwidth}
\centering
\begin{tikzpicture}[scale=.8]
    \draw[blue] (-0.5,0) -- (-1,0); \draw[blue] (-2,0) node[black,circle,fill,inner sep=1.25]{} -- (-1.5,0);
    \draw[red] (-1,0) to[out=90,in=90,looseness=1.7] node[above]{Lg} (-1.5,0);
    \draw[red] (-1.5,0) node[black,circle,fill,inner sep=1.25]{} to[out=270,in=270,looseness=1.7] (-1,0) node[black,circle,fill,inner sep=1.25]{};
    \draw[->-=.5,blue] (0.5,0) -- (2,0) node[black,circle,fill,inner sep=1.25]{};
    \draw[->-=.5,red] (0.5,0) to[out=90,in=90,looseness=1.7] (-0.5,0);
    \draw[->-=.5,red] (-0.5,0) node[black,circle,fill,inner sep=1.25]{} to[out=270,in=270,looseness=1.7] (0.5,0) node[black,circle,fill,inner sep=1.25]{};
    \node at (0,1.25) {};
\end{tikzpicture}
\end{subfigure}
\caption{Two simple graph expansions of the base graph of the Airplane replacement system $\mathcal{A}$.}
\label{fig_exp_A}
\end{figure}

\begin{figure}\centering
\begin{tikzpicture}[scale=1.5]
\draw[blue] (-0.5,0) -- (-2,0);
\draw[blue] (0.5,0) -- (2,0);
\draw[blue] (0,-0.5) -- (0,-1.25);
\draw[blue] (0.35,-0.35) -- (0.75,-0.75);
\draw[blue] (0.55,-0.55) -- (0.7,-0.4);
\draw[blue] (1.25,0) -- (1.25,0.75);
\draw[blue] (1.25,0) -- (1.625,0.375);
\draw[blue] (-1.25,0) -- (-1.25,0.75);
\draw[blue] (-1.25,0.5) -- (-1.05,0.5);
\draw[blue] (-1.75,0) -- (-1.75,-0.2);
\draw[red] (0,0) circle (0.5);
\draw[red,fill=white] (-1.25,0) circle (0.25);
\draw[red,fill=white] (1.25,0) circle (0.25);
\draw[red,fill=white] (0.55,-0.55) circle (0.0625);
\draw[red,fill=white] (-1.25,0.5) circle (0.0625);
\draw[red,fill=white] (-1.11875,0.5) circle (0.025);
\draw[red,fill=white] (-1.75,0) circle (0.0625);
\end{tikzpicture}
\caption{A generic graph expansion of the Airplane replacement system $\mathcal{A}$ (directions omitted for the sake of clarity).}
\label{fig_exp_A_generic}
\end{figure}
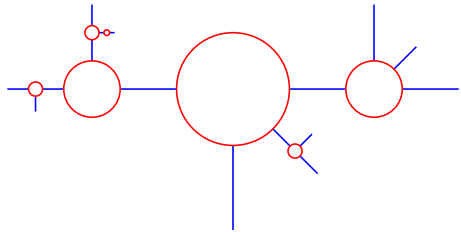

\label{DEF expanding}
A replacement system is said to be \textbf{expanding} if:
\begin{itemize}
    \item neither the base graph nor any replacement graph contains any isolated vertex (i.e., a vertex that is not initial nor terminal for any edge);
    \item in each replacement graph, the initial and terminal vertices are not connected by an edge;
    \item each replacement graph has at least three vertices and two edges.
\end{itemize}
From now on, $\mathcal{X} = (X_0, R, \mathrm{C})$ will always be an expanding replacement system whose colors are $\mathrm{C} = \{1, \dots, k \}$ and whose replacement graphs by $R= \{X_i \mid i \in \mathrm{C} \}$.

Being expanding is a condition that guarantees the existence of the limit space, and every rearrangement group studied so far can be realized with an expanding replacement system.
However, it is worth noting that there are non expanding replacement systems whose limit spaces can be defined in a natural way that allows the construction of their rearrangement groups, but we will not discuss this here.

\subsection{Limit Spaces and Cells}
\label{SUB limit spaces}

Consider the \textbf{full expansion sequence} of $\mathcal{X}$, which is the sequence of graphs obtained by replacing, at each step, every edge with the appropriate replacement graph, starting from the base graph $X_0$.

Whenever a replacement system is expanding, we can define its \textbf{limit space}, which is essentially the limit of the full expansion sequence;
the complete and detailed construction (which consists of taking a quotient of the Cantor space under a so-called \textit{gluing relation} induced by edge adjacency in the graph expansions) is featured in Subsection 1.2 of \cite{belk2016rearrangement}.
Continuing with our example, \cref{fig_limit_spaces} portrays the limit spaces of the Airplane replacement system.
More examples will be portrayed in \cref{SUB examples}.
Limit spaces have nice topological properties, such as being compact and metrizable (Theorem 1.25 of \cite{belk2016rearrangement}).

\begin{remark}\label{TXT gluing relation}
    Each edge of a graph expansion of the base graph can be seen as a finite \textit{address} (a sequence of symbols), as in \cref{fig_exp_A}.
    Each bit of the address corresponds to an edge of a replacement graph, except for the first one which is instead an edge of the base graph.
    Points of the limit space are infinite addresses modulo a \textit{gluing relation}, which is given by edge adjacency.
    Intuitively, if two such addresses approach the same point in the graph expansions, then they are equivalent under the gluing relation, thus they represent the same point in the limit space.
\end{remark}

\begin{figure}\centering
    \includegraphics[width=.425\textwidth]{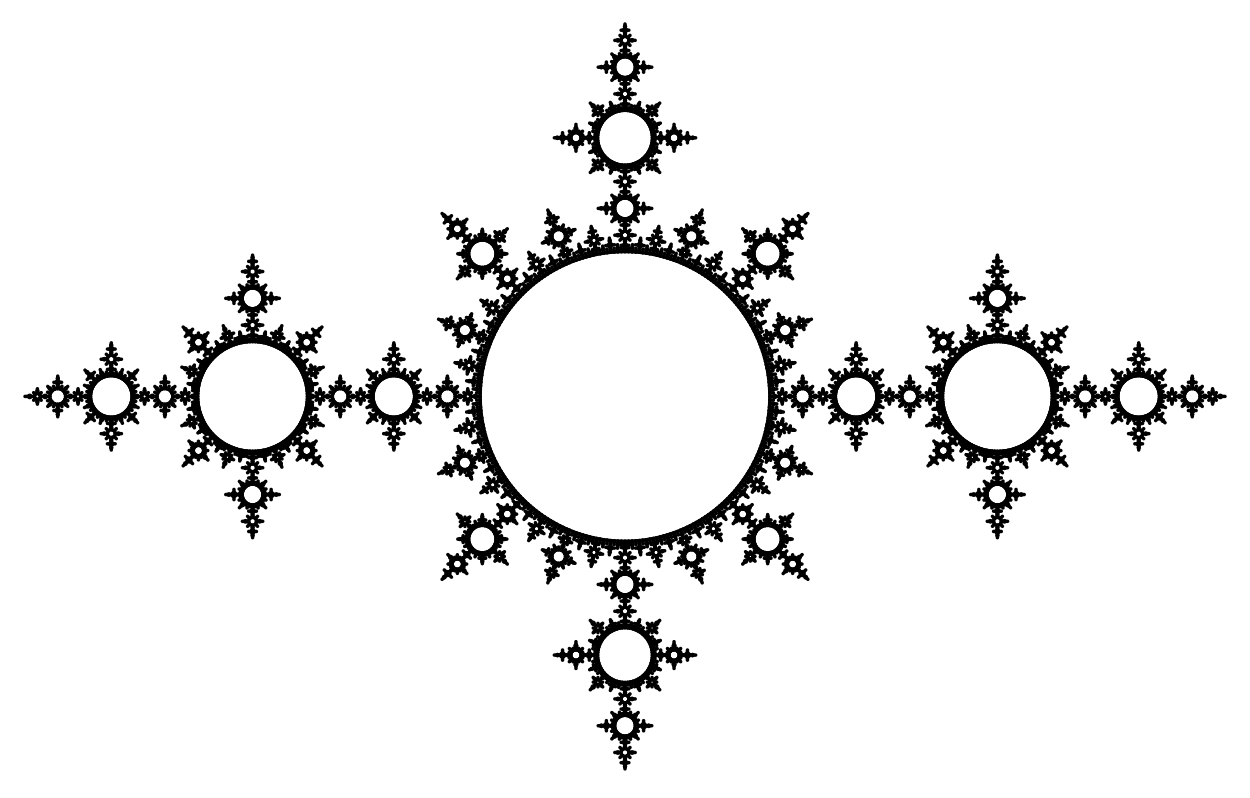}
    \caption{The Airplane limit space. Image from \cite{belk2016rearrangement}.}
    \label{fig_A_limit_space}
\end{figure}

Each limit space is naturally composed of self-similar pieces called \textbf{cells}.
Intuitively, a cell $C(e)$ is the subset of the limit space corresponding to the edge $e$ of some graph expansion, which consists of everything that appears from that edge in subsequent graph expansions.
\cref{fig_cells_A} shows two examples of cells of $\mathcal{A}$.

\begin{figure}\centering
\includegraphics[width=.425\textwidth]{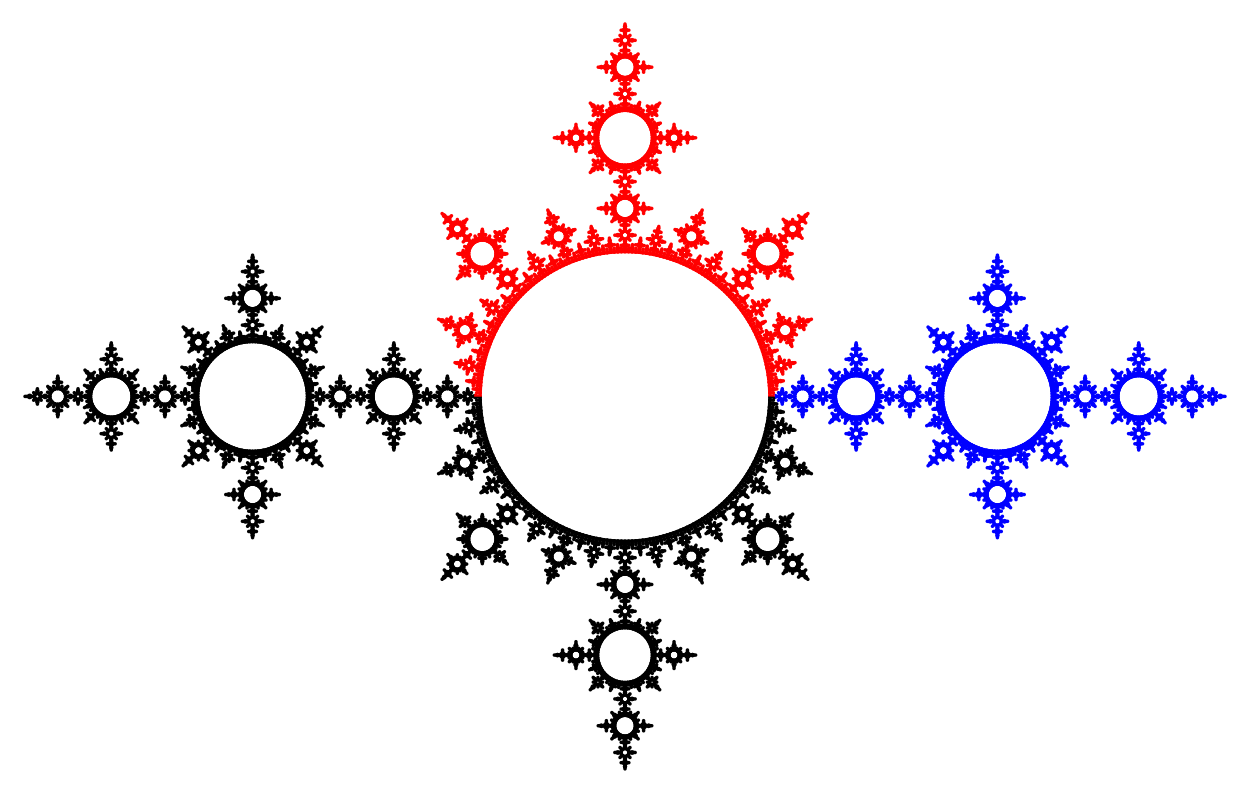}
\caption{The two types of cells in $\mathcal{A}$, distinguished by the color of the generating edge.}
\label{fig_cells_A}
\end{figure}

\subsection{Rearrangements of Limit Spaces}
\label{SUB rearrangements}

There are different \textit{types} of cells $C(e)$, distinguished by two aspects of the generating edge $e$: its color and whether or not it is a loop.
It is not hard to see that there is a \textbf{canonical homeomorphism} between any two cells of the same type.
A canonical homeomorphism between two cells can essentially be thought as a transformation that maps the first cell ``linearly'' to the second, or equivalently as a prefix exchange of addresses (in the sense discussed in \cref{TXT gluing relation}).
The much more detailed definition can be found in \cite{belk2016rearrangement}.

\begin{definition}
\label{DEF cellular partition}
A \textbf{cellular partition} of the limit space $X$ is a cover of $X$ by finitely many cells whose interiors are disjoint.
\end{definition}

Note that there is a natural bijection between the set of graph expansions of a replacement system and the set of cellular partitions.
The bijection is given by mapping each graph expansion to the set consisting of the cells $C(e)$ for every edge $e$ of the graph expansion.

\begin{definition}
\label{DEF rearrangement}
A homeomorphism $f: X \to X$ is called a \textbf{rearrangement} of $X$ if there exists a cellular partition $P$ of $X$ such that $f$ restricts to a canonical homeomorphism on each cell of $P$.
\end{definition}

It can be proved that the rearrangements of a limit space $X$ form a group under composition, called the \textbf{rearrangement group} of $X$.
Despite what may appear from this name, the rearrangement group is not uniquely determined by the limit space $X$, but instead depends on the replacement system $\mathcal{X}$:
distinct replacement systems can produce the same limit space but yield distinct rearrangement groups.

\begin{figure}\centering
\begin{subfigure}[b]{.45\textwidth}
\centering
\begin{tikzpicture}
    \draw[->-=.5] (0,0) -- (4,0);
    \node at (0,0) [circle,fill,inner sep=1.5]{};
    \node at (4,0) [circle,fill,inner sep=1.5]{};
    \draw[white] (0,-1);
\end{tikzpicture}
\caption{The base graph $\Gamma$.}
\end{subfigure}
\begin{subfigure}[b]{.45\textwidth}
\centering
\begin{tikzpicture}
\draw[-angle 60] (0,0) node[circle,fill,inner sep=1.25]{} node[left]{$\iota$} -- (0,1); \draw (0,1) -- (0,2) node[circle,fill,inner sep=1.25]{} node[left]{$\tau$};

\draw[-stealth] (0.4,1) -- (0.8,1);

\draw[-angle 60] (1.2,0) node[circle,fill,inner sep=1.25]{} node[left]{$\iota$} -- (1.2,0.5); \draw (1.2,0.5) -- (1.2,1) node[circle,fill,inner sep=1.25]{}; \draw[-stealth] (1.2,1) -- (1.2,1.5); \draw (1.2,1.5) -- (1.2,2) node[circle,fill,inner sep=1.25]{} node[left]{$\tau$};
\end{tikzpicture}
\caption{The replacement rule $e \to R$.}
\end{subfigure}

\caption{The replacement system for the dyadic subdivision of the unit interval $[0,1]$, whose rearrangement group is Thompson's group $F$.}
\label{fig_replacement_interval}
\end{figure}
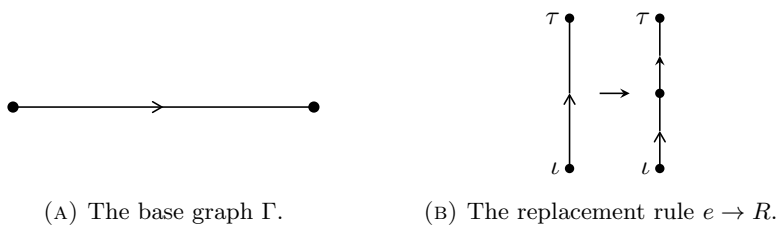

\begin{figure}\centering
\begin{subfigure}{.45\textwidth}
\centering
\begin{tikzpicture}
    \draw[->-=.5] (0,0) circle (1);
    \node at (1,0) [circle,fill,inner sep=1.5]{};
\end{tikzpicture}
\caption{The base graph $\Gamma$.}
\end{subfigure}
\begin{subfigure}{.45\textwidth}
\centering
\begin{tikzpicture}
\draw[-angle 60] (0,0) node[circle,fill,inner sep=1.25]{} node[left]{$\iota$} -- (0,1); \draw (0,1) -- (0,2) node[circle,fill,inner sep=1.25]{} node[left]{$\tau$};

\draw[-stealth] (0.4,1) -- (0.8,1);

\draw[-angle 60] (1.2,0) node[circle,fill,inner sep=1.25]{} node[left]{$\iota$} -- (1.2,0.5); \draw (1.2,0.5) -- (1.2,1) node[circle,fill,inner sep=1.25]{}; \draw[-stealth] (1.2,1) -- (1.2,1.5); \draw (1.2,1.5) -- (1.2,2) node[circle,fill,inner sep=1.25]{} node[left]{$\tau$};
\end{tikzpicture}
\caption{The replacement rule $e \to R$.}
\end{subfigure}

\caption{The replacement system for the dyadic subdivision of the unit circle $S^1$, whose rearrangement group is Thompson's group $T$.}
\label{fig_replacement_circle}
\end{figure}

\begin{figure}\centering
\begin{subfigure}[b]{.45\textwidth}
\centering
\begin{tikzpicture}
    \draw[->-=.5] (0,0) -- (4,0);
    \node at (0,0) [circle,fill,inner sep=1.5]{};
    \node at (4,0) [circle,fill,inner sep=1.5]{};
    \draw[white] (0,-1);
\end{tikzpicture}
\caption{The base graph $\Gamma$.}
\end{subfigure}
\begin{subfigure}[b]{.45\textwidth}
\centering
\begin{tikzpicture}
\draw[-angle 60] (0,0) node[circle,fill,inner sep=1.25]{} node[left]{$\iota$} -- (0,1); \draw (0,1) -- (0,2) node[circle,fill,inner sep=1.25]{} node[left]{$\tau$};

\draw[-stealth] (0.4,1) -- (0.8,1);

\draw[-angle 60] (1.2,0) node[circle,fill,inner sep=1.25]{} node[left]{$\iota$} -- (1.2,0.5); \draw (1.2,0.5) -- (1.2,0.8) node[circle,fill,inner sep=1.25]{}; \draw[-stealth] (1.2,1.2) node[circle,fill,inner sep=1.25]{} -- (1.2,1.7); \draw (1.2,1.7) -- (1.2,2) node[circle,fill,inner sep=1.25]{} node[left]{$\tau$};
\end{tikzpicture}
\caption{The replacement rule $e \to R$.}
\end{subfigure}

\caption{The replacement system for the Cantor space, whose rearrangement group is Thompson's group $V$.}
\label{fig_replacement_cantor}
\end{figure}
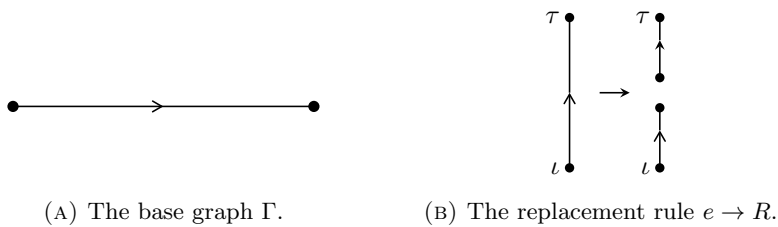

\begin{figure}\centering
\begin{minipage}{.4378\textwidth}\centering
\includegraphics[width=\textwidth]{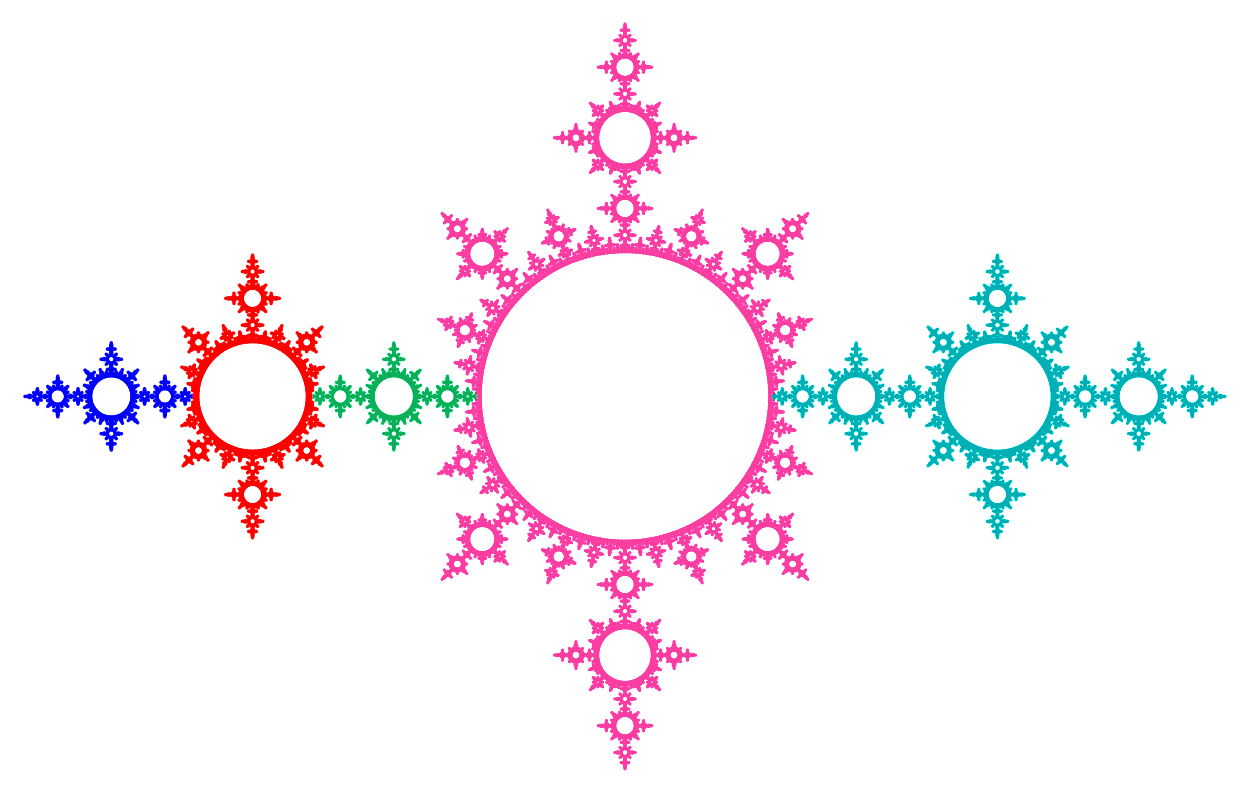}
\end{minipage}%
\begin{minipage}{.095\textwidth}\centering
\begin{tikzpicture}[scale=1.35]
    \draw[-to] (0,0) -- (.3,0);
\end{tikzpicture}
\end{minipage}%
\begin{minipage}{.4378\textwidth}\centering
\includegraphics[width=\textwidth]{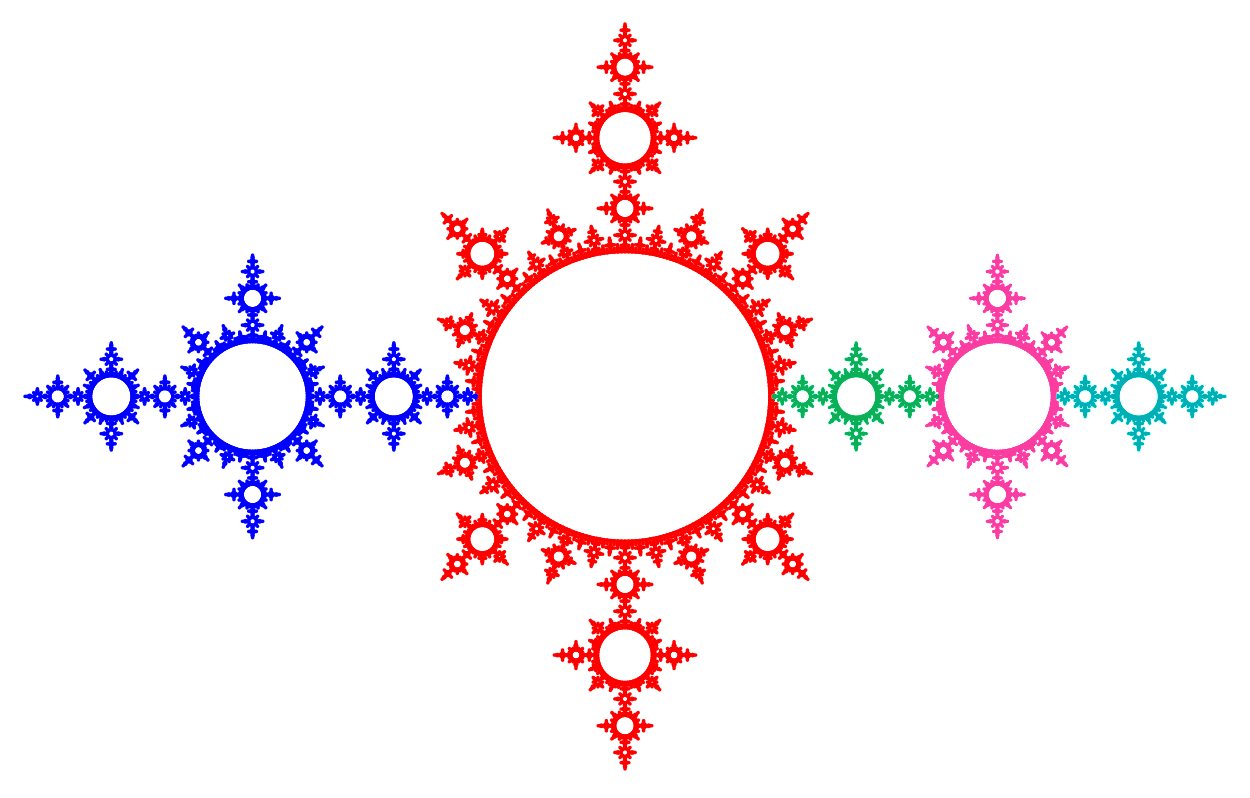}
\end{minipage}\\
\vspace*{10pt}
\begin{tikzpicture}[scale=1.35]
    \draw[blue,->-=.55] (-1.5,0) -- (-2,0);
    \draw[Green,->-=1] (-.75,0) -- (-0.5,0);
    \draw[Green,->-=1] (-.75,0) -- (-1,0);
    \draw[Green,fill=white] (-0.75,0) circle (0.08);
    \draw[TealBlue,->-=.5] (0.5,0) -- (2,0);
    \draw[->-=.5,red] (-1.5,0) to[out=270,in=270,looseness=1.7] (-1,0);
    \draw[->-=.5,red] (-1,0) to[out=90,in=90,looseness=1.7] (-1.5,0);
    \draw[->-=.5,Magenta] (-0.5,0) to[out=270,in=270,looseness=1.7] (0.5,0);
    \draw[->-=.5,Magenta] (0.5,0) to[out=90,in=90,looseness=1.7] (-0.5,0);
    
    \draw[-to] (2.35,0) -- (2.65,0);
    
    \draw[blue,->-=.5] (4.5,0) -- (3,0);
    \draw[red] (5,0) circle (0.5);
    \draw[Green,->-=1] (5.75,0) -- (6,0);
    \draw[Green,->-=1] (5.75,0) -- (5.5,0);
    \draw[Green,fill=white] (5.75,0) circle (0.08);
    \draw[TealBlue,->-=.55] (6.5,0) -- (7,0);
    \draw[->-=.5,red] (4.5,0) to[out=270,in=270,looseness=1.7] (5.5,0);
    \draw[->-=.5,red] (5.5,0) to[out=90,in=90,looseness=1.7] (4.5,0);
    \draw[->-=.5,Magenta] (6,0) to[out=270,in=270,looseness=1.7] (6.5,0);
    \draw[->-=.5,Magenta] (6.5,0) to[out=90,in=90,looseness=1.7] (6,0);
\end{tikzpicture}
\caption{A rearrangement of the Airplane limit space, along with a graph pair diagram that represents it.}
\label{fig_action_alpha}
\end{figure}

\phantomsection\label{TXT graph pair diagrams}
In the setting of rearrangement groups, the trio of Thompson groups $F$, $T$ and $V$ are realized by the replacement systems in Figures \ref{fig_replacement_interval}, \ref{fig_replacement_circle}, \ref{fig_replacement_cantor}, and similar replacement systems also realize the Higman-Thompson groups from \cite{higman1974finitely}.
Similarly to how dyadic rearrangements (the elements of Thompson groups) are specified by permutations between pairs of dyadic subdivisions, rearrangements of a limit space are specified by graph isomorphisms between graph expansions of the replacement systems, called \textbf{graph pair diagrams}.
For example, the rearrangement of the Airplane limit space depicted in \cref{fig_action_alpha} is specified by the graph isomorphism depicted in the same figure.
The colors in that picture mean that each edge of the domain graph is mapped to the edge of the same color in the range graph.

\phantomsection\label{TXT reduced GPDs}
Graph pair diagrams can be \textit{expanded} by expanding an edge in the domain graph and its image in the range graph, resulting in a new graph pair diagram that is more "redundant" then the original one:
it makes the graphs more complex, but it does not add any new information, as the cell corresponding to the edge being expanded in the domain is mapped canonically by the rearrangement.
It is important to note that each rearrangement is represented by a unique \textbf{reduced} graph pair diagram, where reduced means that it is not the result of an expansion of any other graph pair diagram.

\begin{remark}\label{RMK undirected}
A replacement graph $R_c$ may admit a graph automorphism $\phi$ that switches its initial and terminal vertices $\iota$ and $\tau$, as is the case for the blue replacement graph of the Airplane replacement system (\cref{fig_A_replacement_rule}).
When this happens, we can deal with $c$-colored edges as if they were not directed edges, meaning that a $c$-colored edge from $i$ to $t$ can be mapped to a $c$-colored edge from $t$ to $i$, as happens to the edge highlighted in green in \cref{fig_undirected_alpha}.
This is because these edges can be expanded and the resulting graphs are isomorphic independently from the original orientation, so a graph isomorphism that reverses the orientation of an edge is implicitly applying the graph isomorphism $\phi$ to the expansion of that edge (if multiple isomorphisms switches $\iota$ and $\tau$, one needs to consider one such isomorphism $\phi$ and keep it fixed).
In this case, we say that the color $c$ is \textbf{undirected}.
\end{remark}

\begin{figure}\centering
    \begin{tikzpicture}[scale=1.35]
        \draw[blue] (-2,0) -- (-1.5,0);
        \draw[Green] (-1,0) -- (-0.5,0);
        \draw[TealBlue] (0.5,0) -- (2,0);
        \draw[->-=.5,red] (-1.5,0) to[out=270,in=270,looseness=1.7] (-1,0);
        \draw[->-=.5,red] (-1,0) to[out=90,in=90,looseness=1.7] (-1.5,0);
        \draw[->-=.5,Magenta] (-0.5,0) to[out=270,in=270,looseness=1.7] (0.5,0);
        \draw[->-=.5,Magenta] (0.5,0) to[out=90,in=90,looseness=1.7] (-0.5,0);
        
        \draw[-to] (2.35,0) -- (2.65,0);
        
        \draw[blue] (3,0) -- (4.5,0);
        \draw[Green] (5.5,0) -- (6,0);
        \draw[TealBlue] (6.5,0) -- (7,0);
        \draw[->-=.5,red] (4.5,0) to[out=270,in=270,looseness=1.7] (5.5,0);
        \draw[->-=.5,red] (5.5,0) to[out=90,in=90,looseness=1.7] (4.5,0);
        \draw[->-=.5,Magenta] (6,0) to[out=270,in=270,looseness=1.7] (6.5,0);
        \draw[->-=.5,Magenta] (6.5,0) to[out=90,in=90,looseness=1.7] (6,0);
    \end{tikzpicture}
    \caption{The same rearrangement of \cref{fig_action_alpha} represented using undirected blue edges (see \cref{RMK undirected}.}
    \label{fig_undirected_alpha}
\end{figure}
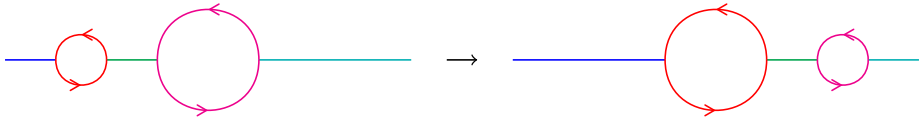

\subsection{Examples of Notable Rearrangement Groups}
\label{SUB examples}

Besides the Airplane, there are many more examples of rearrangement groups coming from previously known fractals.
\cref{fig replacement B,,fig replacement Vic,,fig replacement BB} depict the Basilica, Vicsek and Bubble Bath replacement systems, respectively.
These names come from the fractals that are homeomorphic to their limit spaces, which are shown in \cref{fig_limit_spaces}.
The Basilica replacement system can be generalized by adding more loops to the central vertex of the replacement graph, which results in the so called Rabbit replacement systems (Example 2.3 of \cite{belk2016rearrangement}).
In a similar fashion, the Vicsek replacement system can be generalized by adding more edges originating from the same main ``crossing'' of the replacement graph (Example 2.1 of \cite{belk2016rearrangement}).
Finally, it is worth noting that the Bubble Bath replacement system and a degree-3 variation of the Vicsek replacement system produce rearrangement groups that are similar but actually larger than the ones described in the dissertations \cite{BubbleBath,Dendrite}, respectively:
rearrangements need not preserve the orientation of the limit spaces, thus rearrangement groups properly contain the groups described in these two dissertations.

\begin{figure}\centering
\begin{subfigure}[b]{.4\textwidth}\centering
\begin{tikzpicture}[scale=.75]
    \draw[->-=.5] (0,0) node[circle,fill,inner sep=1.25]{} to[out=60,in=120,looseness=1.5] (2,0);
    \draw[->-=.5] (2,0) node[circle,fill,inner sep=1.25]{} to[out=240,in=300,looseness=1.5] (0,0);
    \draw (0,0) to[in=270,out=240,looseness=1.5] (-1,0); \draw[->-=0] (-1,0) to[in=120,out=90,looseness=1.5] (0,0);
    \draw (2,0) to[in=90,out=60,looseness=1.5] (3,0); \draw[->-=0] (3,0) to[in=300,out=270,looseness=1.5] (2,0);
\end{tikzpicture}
\caption{The base graph.}
\end{subfigure}
\begin{subfigure}[b]{.5\textwidth}\centering
\begin{tikzpicture}[scale=1]
    \draw[->-=.5] (-.25,0) node[circle,fill,inner sep=1.25]{} node[above]{$\iota$} -- (1,0) node[circle,fill,inner sep=1.25]{};
    \draw[->-=.5] (1,0) -- (2.25,0) node[circle,fill,inner sep=1.25]{} node[above]{$\tau$};
    \draw (1,0) to[out=140,in=180,looseness=1.5] (1,0.85); \draw[->-=0] (1,0.85) to[out=0,in=40,looseness=1.75] (1,0);
    \draw[white] (0,-.333);
\end{tikzpicture}
\label{fig_B_replacement_graph}
\caption{The replacement graph.}
\end{subfigure}
\caption{The Basilica replacement system.}
\label{fig replacement B}
\end{figure}
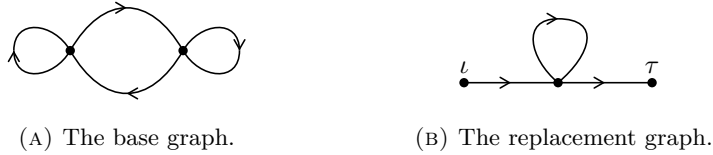

\begin{figure}\centering
\begin{subfigure}[b]{.4\textwidth}\centering
\begin{tikzpicture}[scale=.75]
    \draw[->-=.5] (0,0) node[circle,fill,inner sep=1.25]{} -- (1.5,0) node[circle,fill,inner sep=1.25]{};
    \draw[->-=.5] (0,0) -- (-1.5,0) node[circle,fill,inner sep=1.25]{};
    \draw[->-=.5] (0,0) -- (0,1.5) node[circle,fill,inner sep=1.25]{};
    \draw[->-=.5] (0,0) -- (0,-1.5) node[circle,fill,inner sep=1.25]{};
\end{tikzpicture}
\caption{The base graph.}
\end{subfigure}
\begin{subfigure}[b]{.5\textwidth}\centering
\begin{tikzpicture}[scale=.6]
    \draw[->-=.5] (0,0) node[circle,fill,inner sep=1.25]{} -- (0,1.5) node[circle,fill,inner sep=1.25]{};
    \draw[->-=.5] (0,0) -- (0,-1.5) node[circle,fill,inner sep=1.25]{};
    \draw[->-=.5] (0,0) -- (1.5,0) node[above]{$\tau$} node[circle,fill,inner sep=1.25]{};
    \draw[->-=.5] (0,0) -- (-1.5,0) node[circle,fill,inner sep=1.25]{};
    \draw[->-=.5] (-3,0) node[above]{$\iota$} node[circle,fill,inner sep=1.25]{} -- (-1.5,0);
\end{tikzpicture}
\caption{The replacement graph.}
\end{subfigure}
\caption{The Vicsek replacement system.}
\label{fig replacement Vic}
\end{figure}
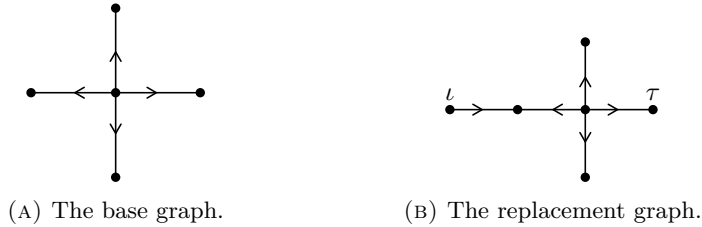

\begin{figure}\centering
\begin{subfigure}[b]{.4\textwidth}\centering
\begin{tikzpicture}[scale=.9]
    \draw[->-=.5] (0,0) node[circle,fill,inner sep=1.25]{} to[out=180,in=180,looseness=1.8] (0,2) node[circle,fill,inner sep=1.25]{};
    \draw[->-=.5] (0,0) -- (0,2);
    \draw[->-=.5] (0,0) to[out=0,in=0,looseness=1.8] (0,2);
\end{tikzpicture}
\caption{The base graph.}
\end{subfigure}
\begin{subfigure}[b]{.5\textwidth}\centering
\begin{tikzpicture}[scale=.8]
    \draw[->-=.5] (-0.5,0) -- (-2,0) node[above]{$\iota$} node[black,circle,fill,inner sep=1.25]{};
    \draw[->-=.5] (0.5,0) -- (2,0) node[above]{$\tau$} node[black,circle,fill,inner sep=1.25]{};
    \draw[->-=.5] (0.5,0) node[circle,fill,inner sep=1.25]{} to[out=90,in=90,looseness=1.7] (-0.5,0);
    \draw[->-=.5] (-0.5,0) node[black,circle,fill,inner sep=1.25]{} to[out=270,in=270,looseness=1.7] (0.5,0) node[black,circle,fill,inner sep=1.25]{};
    \draw[white] (0,-1);
\end{tikzpicture}
\caption{The replacement graph.}
\end{subfigure}
\caption{The Bubble Bath replacement system.}
\label{fig replacement BB}
\end{figure}
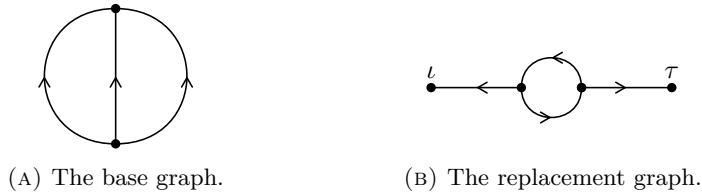

\begin{figure}\centering
\begin{subfigure}[c]{.5\textwidth}\centering
    \includegraphics[width=\textwidth]{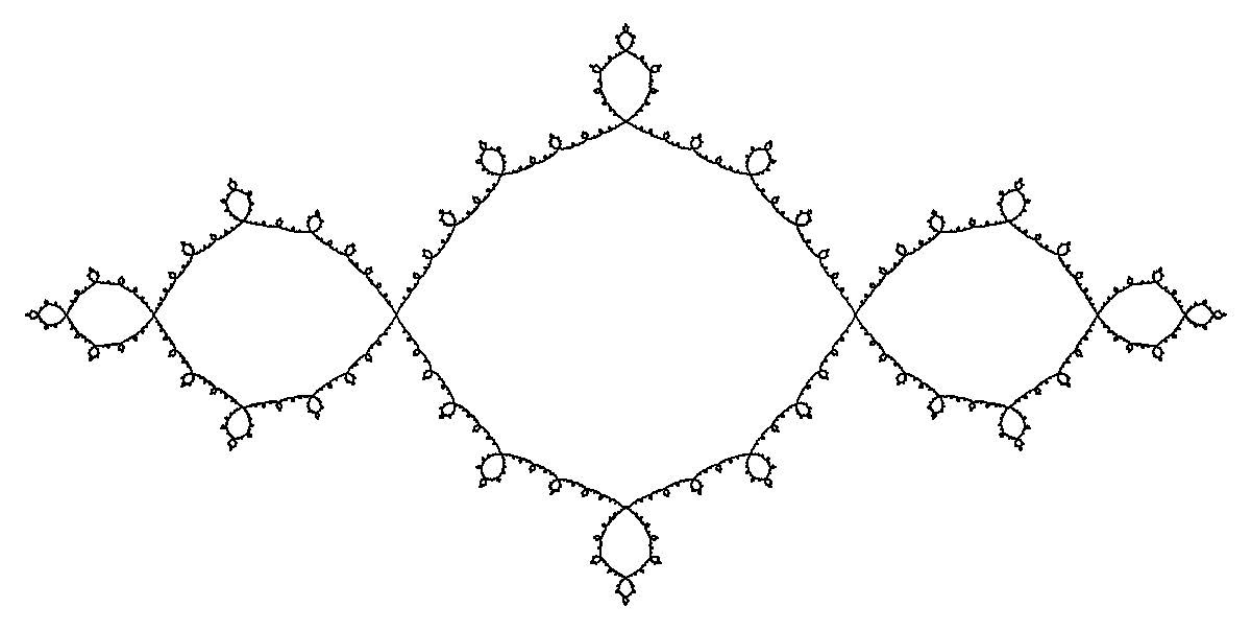}
\end{subfigure}
\hspace{15pt}
\begin{subfigure}[c]{.35\textwidth}\centering
    \includegraphics[width=\textwidth]{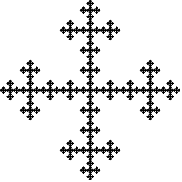}
\end{subfigure}
\\
\begin{subfigure}[c]{.45\textwidth}\centering    \includegraphics[width=\textwidth]{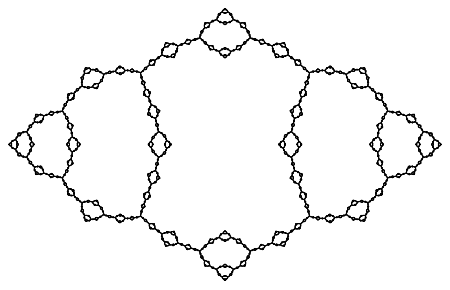}
\end{subfigure}
\hspace{15pt}
\caption{The Basilica, Vicsek and Bubble Bath limit spaces. Images made by James Belk for \cite{Belk_2015} and \cite{belk2016rearrangement}.}
\label{fig_limit_spaces}
\end{figure}

Other examples are the Houghton groups $H_n$ from \cite{Houghton1978TheFC} and the groups $QV$, $\tilde{Q}V$, $QT$ and $QF$ from \cite{QV2} (the first one was denoted by $QAut(\mathcal{T}_{2,c})$ when it appeared in \cite{QV} and later in \cite{QV1}).
The replacement systems for the Houghton groups and the ones for $QV$, $QT$ and $QF$ are depicted in \cref{fig Houghton} and \cref{fig QV}, respectively, and one can obtain those for $\tilde{Q}V$ and $\tilde{Q}T$ from $QV$ and $QT$, respectively, by adding a blue edge to the base graph.
In both replacement systems, the black replacement graphs was chosen because it supports trivial rearrangements (see Example 2.5 of \cite{belk2016rearrangement}).
More natural replacement systems for all of these groups would be the ones where the black replacement graph consists of a sole black edge;
these replacement systems are not expanding, but it possible to define their limit spaces:
the sequences of edges that do not expand would result in single isolated points.
Additionally, changing the base graph of $QV$ would provide a Higman-Thompson kind of generalization for $QV$ that would be natural to denote by $QV_{n,k}$ and the same goes for $QT$ and $QF$ (see also Remark 2.6 of \cite{mallery2022houghtonlike}), and different replacement graphs for the color black would provide generalizations of both the $H_n$'s and the groups $QV$, $\tilde{Q}V$, $QT$ and $QF$.
Many variations of these groups could be defined and studied, but this is beyond the scope of this work.

\begin{figure}\centering
\begin{subfigure}[b]{.325\textwidth}\centering
\begin{tikzpicture}[scale=.75]
    \draw[->-=.5,Green] (180:-1) node[black,circle,fill,inner sep=1.25]{} -- (180: -2) node[black,circle,fill,inner sep=1.25]{};
    \draw[->-=.5,red] (225:-1) node[black,circle,fill,inner sep=1.25]{} -- (225: -2) node[black,circle,fill,inner sep=1.25]{};
    \draw[->-=.5,Turquoise] (270:-1) node[black,circle,fill,inner sep=1.25]{} -- (270: -2) node[black,circle,fill,inner sep=1.25]{};
    \draw[dotted,gray] (315:-1) -- (315: -2);
    \draw (0:-1.5) node{$\dots$};
    \draw[dotted,gray] (45:-1) -- (45: -2);
    \draw[->-=.5,blue] (90:-1) node[black,circle,fill,inner sep=1.25]{} -- (90: -2) node[black,circle,fill,inner sep=1.25]{};
    \draw[->-=.5,Orange] (135:-1) node[black,circle,fill,inner sep=1.25]{} -- (135: -2) node[black,circle,fill,inner sep=1.25]{};
\end{tikzpicture}
\caption{The base graph.}
\end{subfigure}
\begin{subfigure}[b]{.65\textwidth}\centering
\centering
    \begin{subfigure}{.48\textwidth}
    \centering
    \begin{tikzpicture}[scale=.75]
        \draw[->-=.5] (0,0) node[black,circle,fill,inner sep=1.25]{} node[black,above]{$\iota$} -- (1.5,-.8);
        \draw[->-=.5] (1.5,-.8) node[black,circle,fill,inner sep=1.25]{} -- (3,0) node[black,circle,fill,inner sep=1.25]{};
        \draw[->-=.5] (0,0) -- (1,.75) node[black,circle,fill,inner sep=1.25]{};
        \draw[->-=.5] (1,.75) -- (2,.75) node[black,circle,fill,inner sep=1.25]{};
        \draw[->-=.5] (2,.75) -- (3,0) node[black,above]{$\tau$};
        \draw[white] (0,-1);
    \end{tikzpicture}
    \end{subfigure}
    \begin{subfigure}{.48\textwidth}
    \centering
    \begin{tikzpicture}[scale=1]
        \draw[->-=.5] (0,0) node[black,circle,fill,inner sep=1.25]{} node[black,above]{$\iota$} -- (1,0) node[black,circle,fill,inner sep=1.25]{};
        \draw[->-=.5,Green] (2,0) node[black,circle,fill,inner sep=1.25]{} -- (3,0) node[black,circle,fill,inner sep=1.25]{} node[black,above]{$\tau$};
        \draw[white] (0,-.75);
    \end{tikzpicture}
    \end{subfigure}
\caption{The black and the \textcolor{Green}{green} replacement graphs, respectively. The other colors are analogous to the green one.}
\end{subfigure}
\caption{Replacement systems for the Houghton groups $H_n$. There are precisely $n+1$ colors, one of which is black.}
\label{fig Houghton}
\end{figure}

\begin{figure}\centering
\begin{subfigure}[c]{\textwidth}\centering
\begin{subfigure}[b]{.275\textwidth}\centering
\begin{tikzpicture}
    \draw[->-=.5,blue] (0,0) node[black,circle,fill,inner sep=1.25]{} -- (2,0) node[black,circle,fill,inner sep=1.25]{};
    \draw[white] (0,-.75);
\end{tikzpicture}
\caption{The base graph.}
\end{subfigure}
\begin{subfigure}[b]{.7\textwidth}\centering
\centering
    \begin{subfigure}{.4\textwidth}
    \centering
    \begin{tikzpicture}[scale=.75]
        \draw[->-=.5] (0,0) node[black,circle,fill,inner sep=1.25]{} node[black,above]{$\iota$} -- (1.5,-.8);
        \draw[->-=.5] (1.5,-.8) node[black,circle,fill,inner sep=1.25]{} -- (3,0) node[black,circle,fill,inner sep=1.25]{};
        \draw[->-=.5] (0,0) -- (1,.75) node[black,circle,fill,inner sep=1.25]{};
        \draw[->-=.5] (1,.75) -- (2,.75) node[black,circle,fill,inner sep=1.25]{};
        \draw[->-=.5] (2,.75) -- (3,0) node[black,above]{$\tau$};
        \draw[white] (0,-1);
    \end{tikzpicture}
    \end{subfigure}
    \begin{subfigure}{.55\textwidth}
    \centering
    \begin{tikzpicture}[scale=1]
        \draw[->-=.5,blue] (0,0) node[black,circle,fill,inner sep=1.25]{} node[black,above]{$\iota$} -- (1,0) node[black,circle,fill,inner sep=1.25]{};
        \draw[->-=.5] (1.5,0) node[black,circle,fill,inner sep=1.25]{} -- (2.5,0) node[black,circle,fill,inner sep=1.25]{};
        \draw[->-=.5,blue] (3,0) node[black,circle,fill,inner sep=1.25]{} -- (4,0) node[black,circle,fill,inner sep=1.25]{} node[black,above]{$\tau$};
        \draw[white] (0,-.75);
    \end{tikzpicture}
    \end{subfigure}
\caption{The black and the \textcolor{blue}{blue} replacement graphs, respectively.}
\end{subfigure}
\end{subfigure}
\vspace{15pt}
\\
\begin{subfigure}[c]{\textwidth}\centering
\begin{subfigure}[b]{.275\textwidth}\centering
\begin{tikzpicture}[scale=.667]
    \draw[->-=.5,blue] (0,0) circle (1);
    \node at (1,0) [circle,fill,inner sep=1.5]{};
\end{tikzpicture}
\caption{The base graph.}
\end{subfigure}
\begin{subfigure}[b]{.7\textwidth}\centering
\centering
    \begin{subfigure}{.4\textwidth}
    \centering
    \begin{tikzpicture}[scale=.75]
        \draw[->-=.5] (0,0) node[black,circle,fill,inner sep=1.25]{} node[black,above]{$\iota$} -- (1.5,-.8);
        \draw[->-=.5] (1.5,-.8) node[black,circle,fill,inner sep=1.25]{} -- (3,0) node[black,circle,fill,inner sep=1.25]{};
        \draw[->-=.5] (0,0) -- (1,.75) node[black,circle,fill,inner sep=1.25]{};
        \draw[->-=.5] (1,.75) -- (2,.75) node[black,circle,fill,inner sep=1.25]{};
        \draw[->-=.5] (2,.75) -- (3,0) node[black,above]{$\tau$};
        \draw[white] (0,-1);
    \end{tikzpicture}
    \end{subfigure}
    \begin{subfigure}{.55\textwidth}
    \centering
    \begin{tikzpicture}[scale=1]
        \draw[->-=.5,blue] (0,0) node[black,circle,fill,inner sep=1.25]{} node[black,above]{$\iota$} -- (1.5,0) node[black,circle,fill,inner sep=1.25]{};
        \draw[->-=.5,blue] (1.5,0) -- (3,0) node[black,circle,fill,inner sep=1.25]{} node[black,above]{$\tau$};
        \draw[->-=.5] (0.75,0.75) node[black,circle,fill,inner sep=1.25]{} -- (2.25,0.75) node[black,circle,fill,inner sep=1.25]{};
        \draw[white] (0,-.333);
    \end{tikzpicture}
    \end{subfigure}
\caption{The black and the \textcolor{blue}{blue} replacement graphs, respectively.}
\end{subfigure}
\end{subfigure}
\vspace{15pt}
\\
\begin{subfigure}[c]{\textwidth}\centering
\begin{subfigure}[b]{.275\textwidth}\centering
\begin{tikzpicture}
    \draw[->-=.5,blue] (0,0) node[black,circle,fill,inner sep=1.25]{} -- (2,0) node[black,circle,fill,inner sep=1.25]{};
    \draw[white] (0,-.75);
\end{tikzpicture}
\caption{The base graph.}
\end{subfigure}
\begin{subfigure}[b]{.7\textwidth}\centering
\centering
    \begin{subfigure}{.4\textwidth}
    \centering
    \begin{tikzpicture}[scale=.75]
        \draw[->-=.5] (0,0) node[black,circle,fill,inner sep=1.25]{} node[black,above]{$\iota$} -- (1.5,-.8);
        \draw[->-=.5] (1.5,-.8) node[black,circle,fill,inner sep=1.25]{} -- (3,0) node[black,circle,fill,inner sep=1.25]{};
        \draw[->-=.5] (0,0) -- (1,.75) node[black,circle,fill,inner sep=1.25]{};
        \draw[->-=.5] (1,.75) -- (2,.75) node[black,circle,fill,inner sep=1.25]{};
        \draw[->-=.5] (2,.75) -- (3,0) node[black,above]{$\tau$};
        \draw[white] (0,-1);
    \end{tikzpicture}
    \end{subfigure}
    \begin{subfigure}{.55\textwidth}
    \centering
    \begin{tikzpicture}[scale=1]
        \draw[->-=.5,blue] (0,0) node[black,circle,fill,inner sep=1.25]{} node[black,above]{$\iota$} -- (1.5,0) node[black,circle,fill,inner sep=1.25]{};
        \draw[->-=.5,blue] (1.5,0) -- (3,0) node[black,circle,fill,inner sep=1.25]{} node[black,above]{$\tau$};
        \draw[->-=.5] (0.75,0.75) node[black,circle,fill,inner sep=1.25]{} -- (2.25,0.75) node[black,circle,fill,inner sep=1.25]{};
        \draw[white] (0,-.333);
    \end{tikzpicture}
    \end{subfigure}
\caption{The black and the \textcolor{blue}{blue} replacement graphs, respectively.}
\end{subfigure}
\end{subfigure}
\caption{From top to bottom, the three replacement systems for the groups $QV$, $QT$ and $QF$.}
\label{fig QV}
\end{figure}

Finally, topological full groups of (one-sided) subshifts of finite type (see \cite{Matui} or Definition 6.9 from \cite{belk2022type}) can be realized as rearrangement groups: each type is a color; replacement graphs consist of disjoints edges, one for every edge in the graph that defines the subshift, each colored by the terminal vertex of the edge.
However, the application of the results from this work to such groups is limited to those whose replacement rules are reduction confluent (\cref{DEF red-conf}).
Describing a class of topological full groups of subshifts of finite type that satisfiy this condition (or such that the issues caused by the lack of this condition can be circumvented as described in \cref{SUB non conf}) is left for further investigation.

\subsection{Forest Pair Diagrams}
\label{SUB FPDs}

So far we have described rearrangements using graph pair diagrams, as in \cite{belk2016rearrangement}.
Similarly to how Thompson groups can also be described by using \textit{tree pair diagrams}, here we will introduce a new way of representing rearrangements using \textit{forest pair diagrams}.
This will require the addition of labels that describe the graph structure of the graph expansions.

\subsubsection{Forest Expansions}
\label{SUB forest expansions}

We first ``translate'' the data codified by the base and the replacement graphs into rooted forests and trees that are labeled on the edges, as is explained in the following paragraphs.
As an example of this construction, the reader can refer to \cref{fig_A_forest}, which represents the Airplane replacement system from \cref{fig_replacement_A} using forests and trees.

For each graph among $X_0, X_1, \dots, X_k$, fix an ordering of its edges.
In order to avoid confusion between the edges of graph expansions and those of forest expansions, we will call the latter by \textbf{branches}.
Each of the forests that we will describe are rooted, labeled on the branches and equipped with an ordering of their roots and with a rotation system.
Recall that a \textbf{rotation system} on a graph is an assignment of a circular order to the edges incident on each vertex;
in all of our figures we will indicate the rotation system by the counterclockwise order of the edges around each vertex.

\begin{figure}
    \centering
    \begin{subfigure}[b]{\textwidth}
    \centering
    \begin{tikzpicture}[font=\small,scale=1.2]
        \draw (0,0) node[black,circle,fill,inner sep=1.25]{} -- node[red,left,align=center]{c\\b} (0,-1) node[black,circle,fill,inner sep=1.25]{};
        \draw (1.25,0) node[black,circle,fill,inner sep=1.25]{} -- node[blue,left,align=center]{b\\a} (1.25,-1) node[black,circle,fill,inner sep=1.25]{};
        \draw (2.5,0) node[black,circle,fill,inner sep=1.25]{} -- node[red,left,align=center]{b\\c} (2.5,-1) node[black,circle,fill,inner sep=1.25]{};
        \draw (3.75,0) node[black,circle,fill,inner sep=1.25]{} -- node[blue,left,align=center]{c\\d} (3.75,-1) node[black,circle,fill,inner sep=1.25]{};
        \begin{scope}[xshift=7cm,yshift=-.5cm,scale=.8]
        \draw[blue] (-0.5,0) node[black,anchor=south east]{b} -- (-2,0) node[black,above]{a} node[black,circle,fill,inner sep=1.25]{}; \draw[blue] (0.5,0) node[black,anchor=south west]{c} -- (2,0) node[black,above]{d} node[black,circle,fill,inner sep=1.25]{};
        \draw[->-=.5,red] (0.5,0) node[circle,fill,inner sep=1.25]{} to[out=90,in=90,looseness=1.7] (-0.5,0);
        \draw[->-=.5,red] (-0.5,0) node[black,circle,fill,inner sep=1.25]{} to[out=270,in=270,looseness=1.7] (0.5,0) node[black,circle,fill,inner sep=1.25]{};
        \end{scope}
    \end{tikzpicture}
    \caption{The base forest compared to the base graph}
    \label{fig_A_base_forest}
    \end{subfigure}
    \begin{subfigure}[b]{\textwidth}
    \centering
    \vspace{5pt}
    \begin{tikzpicture}[font=\small,scale=1.2]
        \draw (0,0) node[black,circle,fill,inner sep=1.25]{} -- node[red,left,align=center]{$\iota$\\$\tau$} (0,-1) node[black,circle,fill,inner sep=1.25]{};
        \draw (0,-1) to[out=180,in=90,looseness=1.2] (-1.25,-2) node[black,circle,fill,inner sep=1.25]{} node[red,xshift=-.3cm,yshift=.3cm,align=center]{$\iota$\\x};
        \draw (0,-1) -- (0,-2) node[black,circle,fill,inner sep=1.25]{} node[blue,xshift=-.3cm,yshift=.3cm,align=center]{x\\y};
        \draw (0,-1) to[out=0,in=90,looseness=1.2] (1.25,-2) node[black,circle,fill,inner sep=1.25]{} node[red,xshift=-.3cm,yshift=.3cm,align=center]{x\\$\tau$};
        \begin{scope}[xshift=4cm,yshift=-2cm]
        \draw[->-=.5,red] (0,0) node[black,circle,fill,inner sep=1.25]{} node[black,left]{$\iota$} -- (0,1) node[black,left]{x};
        \draw[->-=.5,red] (0,1) -- (0,2) node[black,circle,fill,inner sep=1.25]{} node[black,left]{$\tau$};
        \draw[blue] (0,1) node[black,circle,fill,inner sep=1.25]{} -- (1,1) node[black,circle,fill,inner sep=1.25]{} node[black,right]{y};
        \end{scope}
    \end{tikzpicture}
    \vspace{5pt}
    \\
    \begin{tikzpicture}[font=\small,scale=1.2]
        \draw (0,0) node[black,circle,fill,inner sep=1.25]{} -- node[blue,left,align=center]{$\iota$\\$\tau$} (0,-1) node[black,circle,fill,inner sep=1.25]{};
        \draw (0,-1) to[out=180,in=90,looseness=1.2] (-1.5,-2) node[black,circle,fill,inner sep=1.25]{} node[blue,xshift=-.3cm,yshift=.3cm,align=center]{x\\$\iota$};
        \draw (0,-1) to[out=195,in=90,looseness=1.2] (-.5,-2) node[black,circle,fill,inner sep=1.25]{} node[red,xshift=-.3cm,yshift=.3cm,align=center]{x\\y};
        \draw (0,-1) to[out=345,in=90,looseness=1.2] (.5,-2) node[black,circle,fill,inner sep=1.25]{} node[blue,xshift=-.3cm,yshift=.3cm,align=center]{y\\$\tau$};
        \draw (0,-1) to[out=0,in=90,looseness=1.2] (1.5,-2) node[black,circle,fill,inner sep=1.25]{} node[red,xshift=-.3cm,yshift=.3cm,align=center]{y\\x};
        \begin{scope}[xshift=4cm,yshift=-1cm,scale=.72]
        \draw[blue] (-0.5,0) node[black,anchor=south east]{x} -- (-2,0) node[black,above]{$\iota$} node[black,circle,fill,inner sep=1.25]{}; \draw[blue] (0.5,0) node[black,anchor=south west]{y} -- (2,0) node[black,above]{$\tau$} node[black,circle,fill,inner sep=1.25]{};
        \draw[->-=.5,red] (0.5,0) node[circle,fill,inner sep=1.25]{} to[out=90,in=90,looseness=1.7] (-0.5,0);
        \draw[->-=.5,red] (-0.5,0) node[black,circle,fill,inner sep=1.25]{} to[out=270,in=270,looseness=1.7] (0.5,0) node[black,circle,fill,inner sep=1.25]{};
        \end{scope}
    \end{tikzpicture}
    \caption{The replacement trees compared to the replacement graphs}
    \label{fig_A_replacement-trees}
    \end{subfigure}
    \caption{The Airplane replacement system represented with forests.}
    \label{fig_A_forest}
\end{figure}
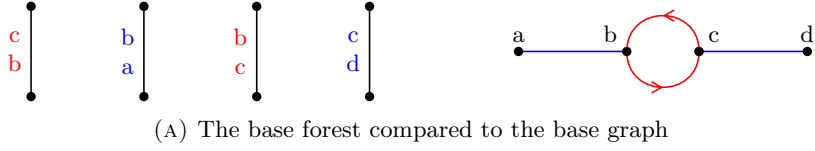
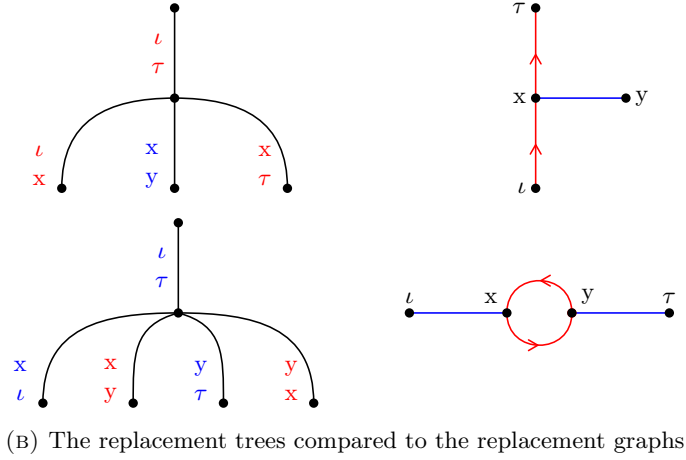

The \textbf{base forest} $F_0$ consists solely of a root for each edge of the base graph $X_0$, in their given order, and a single branch departing from each of these roots.
Each branch is colored by the color of the corresponding edge of $X_0$ and labeled by a triple $(v,w,z)$, where $v$ and $w$ are the origin and the terminus of the edge, respectively, and $z \in \mathbb{Z}^+$ distinguishes the occurrence of parallel edges.
More precisely, the first edge going from $v$ to $w$ produces the label $(v,w,1)$, the second such edge produces $(v,w,2)$ and so on.
When dealing with replacement systems whose graph expansions are devoid of parallel edges, for the sake of brevity we omit the index $z = 1$ and we simply label branches by $(v,w)$, as is the case for the Airplane replacement system in \cref{fig_A_forest}.
None of the replacement systems that have been introduced have parallel edges, so we give an example of this third index of the labeling in \cref{fig parallel edges}.

\begin{figure}
    \centering
    \begin{subfigure}[b]{.475\textwidth}
    \centering
    \begin{tikzpicture}[font=\small,scale=1]
        \draw[->-=.5] (-2,0) node[black,above]{$\iota$} node[black,circle,fill,inner sep=1.25]{} -- (-0.5,0) node[black,anchor=south east]{a} node[black,circle,fill,inner sep=1.25]{};
        \draw[->-=.5] (-0.5,0) node[circle,fill,inner sep=1.25]{} to[out=90,in=90,looseness=1.7] (0.5,0);
        \draw[->-=.5] (-0.5,0) node[black,circle,fill,inner sep=1.25]{} to[out=270,in=270,looseness=1.7] (0.5,0) node[black,circle,fill,inner sep=1.25]{};
        \draw[->-=.5] (0.5,0) node[black,anchor=south west]{b} -- (2,0) node[black,above]{$\tau$} node[black,circle,fill,inner sep=1.25]{};
    \end{tikzpicture}
    \end{subfigure}
    \begin{subfigure}[b]{.475\textwidth}
    \centering
    \begin{tikzpicture}[font=\small,scale=1.2]
        \draw (0,0) node[black,circle,fill,inner sep=1.25]{} -- node[left,align=center]{$\iota$\\$\tau$} (0,-1) node[black,circle,fill,inner sep=1.25]{};
        \draw (0,-1) to[out=180,in=90,looseness=1.2] (-1.5,-2) node[black,circle,fill,inner sep=1.25]{} node[xshift=-.3cm,yshift=.3cm,align=center]{$\iota$\\x};
        \draw (0,-1) to[out=195,in=90,looseness=1.2] (-.5,-2) node[black,circle,fill,inner sep=1.25]{} node[xshift=-.3cm,yshift=.3cm,align=center]{a\\b\\1};
        \draw (0,-1) to[out=345,in=90,looseness=1.2] (.5,-2) node[black,circle,fill,inner sep=1.25]{} node[xshift=-.3cm,yshift=.3cm,align=center]{a\\b\\2};
        \draw (0,-1) to[out=0,in=90,looseness=1.2] (1.5,-2) node[black,circle,fill,inner sep=1.25]{} node[xshift=-.3cm,yshift=.3cm,align=center]{y\\$\tau$};
    \end{tikzpicture}
    \end{subfigure}
    \caption{A graph with parallel edges and its replacement tree.}
    \label{fig parallel edges}
\end{figure}

Also, for undirected colors (as in \cref{RMK undirected}) we naturally consider $(v,w,z)$ to be the same as $(w,v,z)$.
As explained in \cref{RMK undirected}, this identification is implying a graph isomorphism on the expansion of the edges $(v,w)$ and $(w,v)$.

\phantomsection\label{TXT replacement trees}
For each color $i \in \mathrm{C}$, we call $i$-th \textbf{replacement tree} and denote by $T_i$ the tree consisting of a top branch labeled by $(\iota,\tau,\epsilon)$ (where $\epsilon$ is just a temporary symbol) which splits into a bottom branch for each edge of $X_i$, in their given order.
The top branch is colored by $i$, whereas each bottom branch is colored by the color of the corresponding edge and labeled by a triple $(v,w,z)$, where $v$ and $w$ are the origin and the terminus of the corresponding edge, respectively, and $z \in \mathbb{Z}^+$ is an index that distinguishes parallel edges, as seen earlier for the base forest.
In this labeling, we use special symbols $\iota$ and $\tau$ to denote the initial and terminal vertices of the graph $X_i$.
Every symbol that appears among the labels of $T_i$ should be thought as a temporary symbol that will be properly changed each time we produce some new expansion, as explained below.

Suppose that we want to expand an edge $e$ of the base graph $X_0$.
In the base forest $F_0$, consider the branch $l$ corresponding to the edge $e$, and suppose that $(v,w,z)$ is its label and $c$ is its color.
The \textit{forest expansion} of $F_0$ by the branch $l$ is obtained by attaching below $l$ a copy of the replacement tree $T_c$ where the symbols $\iota$, $\tau$ and $\epsilon$ have been replaced by $v$, $w$ and $z$, respectively, and every other symbol of $T_c$ has been chosen so that vertices use new symbols that do not appear in $F_0$ (otherwise we would end up denoting different vertices by the same symbol).
The forest obtained in this way is denoted by $F_0 \triangleleft l$, and it is called a \textbf{simple forest expansion} of the base forest $F_0$.
For example, \cref{fig_A_forest_expansions} depicts two simple forest expansions of the Airplane limit space.

We can inductively expand any bottom branch of any forest expansion of $F_0$ in the same way we just described:
attach the replacement tree of the same color as the branch and adjust its labels.
A sequence $F_0 \triangleleft l_1 \triangleleft \cdots \triangleleft l_m$ of simple forest expansions is called a \textbf{forest expansion} of $F_0$.

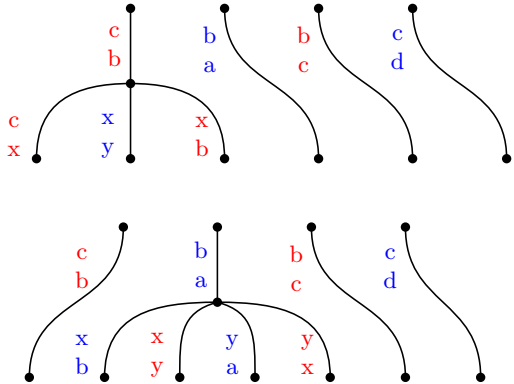
\begin{figure}
    \centering
    \begin{subfigure}[b]{\textwidth}
    \centering
    \begin{tikzpicture}[font=\small,scale=1]
        \draw (0,0) node[black,circle,fill,inner sep=1.25]{} -- node[red,left,align=center]{c\\b} (0,-1) node[black,circle,fill,inner sep=1.25]{};
        \draw (1.25,0) node[black,circle,fill,inner sep=1.25]{} node[blue,xshift=-.2cm,yshift=-.55cm,align=center]{b\\a} to[out=270,in=90,looseness=1.2] (2.5,-2) node[black,circle,fill,inner sep=1.25]{};
        \draw (2.5,0) node[black,circle,fill,inner sep=1.25]{} node[red,xshift=-.2cm,yshift=-.55cm,align=center]{b\\c} to[out=270,in=90,looseness=1.2] (3.75,-2) node[black,circle,fill,inner sep=1.25]{};
        \draw (3.75,0) node[black,circle,fill,inner sep=1.25]{} node[blue,xshift=-.2cm,yshift=-.55cm,align=center]{c\\d} to[out=270,in=90,looseness=1.2] (5,-2) node[black,circle,fill,inner sep=1.25]{};
        \draw (0,-1) to[out=180,in=90,looseness=1.2] (-1.25,-2) node[black,circle,fill,inner sep=1.25]{} node[red,xshift=-.3cm,yshift=.3cm,align=center]{c\\x};
        \draw (0,-1) -- (0,-2) node[black,circle,fill,inner sep=1.25]{} node[blue,xshift=-.3cm,yshift=.3cm,align=center]{x\\y};
        \draw (0,-1) to[out=0,in=90,looseness=1.2] (1.25,-2) node[black,circle,fill,inner sep=1.25]{} node[red,xshift=-.3cm,yshift=.3cm,align=center]{x\\b};
    \end{tikzpicture}
    \\
    \vspace{20pt}
    \begin{tikzpicture}[font=\small,scale=1]
        \draw (0,0) node[black,circle,fill,inner sep=1.25]{} node[red,xshift=-.55cm,yshift=-.55cm,align=center]{c\\b} to[out=270,in=90,looseness=1.2] (-1.25,-2) node[black,circle,fill,inner sep=1.25]{};
        \draw (1.25,0) node[black,circle,fill,inner sep=1.25]{} -- node[blue,left,align=center]{b\\a} (1.25,-1) node[black,circle,fill,inner sep=1.25]{};
        \draw (2.5,0) node[black,circle,fill,inner sep=1.25]{} node[red,xshift=-.2cm,yshift=-.55cm,align=center]{b\\c} to[out=270,in=90,looseness=1.2] (3.75,-2) node[black,circle,fill,inner sep=1.25]{};
        \draw (3.75,0) node[black,circle,fill,inner sep=1.25]{} node[blue,xshift=-.2cm,yshift=-.55cm,align=center]{c\\d} to[out=270,in=90,looseness=1.2] (4.75,-2) node[black,circle,fill,inner sep=1.25]{};
        \begin{scope}[xshift=1.25cm]
        \draw (0,-1) to[out=180,in=90,looseness=1.2] (-1.5,-2) node[black,circle,fill,inner sep=1.25]{} node[blue,xshift=-.3cm,yshift=.3cm,align=center]{x\\b};
        \draw (0,-1) to[out=195,in=90,looseness=1.2] (-.5,-2) node[black,circle,fill,inner sep=1.25]{} node[red,xshift=-.3cm,yshift=.3cm,align=center]{x\\y};
        \draw (0,-1) to[out=345,in=90,looseness=1.2] (.5,-2) node[black,circle,fill,inner sep=1.25]{} node[blue,xshift=-.3cm,yshift=.3cm,align=center]{y\\a};
        \draw (0,-1) to[out=0,in=90,looseness=1.2] (1.5,-2) node[black,circle,fill,inner sep=1.25]{} node[red,xshift=-.3cm,yshift=.3cm,align=center]{y\\x};
        \end{scope}
    \end{tikzpicture}
    \end{subfigure}
    \caption{Two simple forest expansions of the Airplane replacement system, equivalent to the graph expansions of \cref{fig_exp_A}.}
    \label{fig_A_forest_expansions}
\end{figure}

These forest expansions correspond exactly to graph expansions of the replacement system, in this sense:
the labeling of the bottom branches of $T_i$ (or $F_0$) determines uniquely the graph $X_i$ (or $X_0$), since a graph with no isolated vertices is entirely described by its edges, and each edge is specified by the ordered pair of vertices $v,w$ along with a number $z$ distinguishing parallel edges;
thus, when expanding a bottom branch labeled by $(v,w,z)$, the bottom branches of the resulting forest describe the corresponding graph expansion:
essentially, each $c$-colored bottom branch labeled by $(v,w,z)$ of a forest expansion represents the $c$-colored $z$-th edge starting from $v$ and terminating at $w$ of the graph expansion.
Since there is an obvious natural bijection between bottom branches and leaves, we will refer to the graph described in this way by the labeling of the bottom branches by the expression \textbf{leaf graph}.

\phantomsection\label{TXT renaming symbols}
Finally, suppose that two forests are the same up to a \textbf{renaming of symbols} (where by \textit{symbol} we mean the name of vertices and the index that distinguishes between parallel edges).
In this case we consider them to be equal, since they represent the same graph expansion, only with vertices being named by different symbols.
For example, in \cref{fig_A_base_forest} we could change each a to $\alpha$, b to $\beta$, c to $\gamma$ and d to $\delta$, as the data encoded in the forest would clearly be the same.

\begin{remark}
Under the point of view of forest expansions, the limit space (defined in \cref{SUB limit spaces}) can be seen as the quotient under the gluing relation of the boundary of the infinite forest obtained by expanding every branch from the base forest.
For instance, in the trivial case in which there is no gluing at all (meaning that replacement graphs consist of single disjoint edges and so the rearrangement group is a topological full group of a subshift of finite type), this gives the usual representation of the Cantor space as the boundary of a forest.
\end{remark}

\subsubsection{Forest Pair Diagrams}

It is known that a graph isomorphism is entirely described by its action on the edges of the graph, when dealing with graphs that do not have any isolated vertex.
This requirement holds for graph expansions of expanding replacement systems (\cref{DEF expanding}), hence a permutation of leaves (and thus of bottom branches) entirely describes a graph isomorphism between two graph expansions.
With this in mind, we can define a new description of rearrangements that is equivalent to that of the graph pair diagrams (which were defined at \cpageref{TXT graph pair diagrams}).

\begin{definition}
A \textbf{forest pair diagram} is a triple $(F_D, F_R, \sigma)$, where $F_D$ and $F_R$, called \textit{domain forest} and \textit{range forest}, respectively, are forest expansions of $F_0$ and $\sigma$ is a bijection from the set of leaves of $F_D$ to that of $F_R$ that is also a graph isomorphism between the leaf graphs of $F_D$ and $F_R$.
\end{definition}

An example is displayed in \cref{fig_forest_alpha_old}.
Notice that $\sigma$ is a graph isomorphism only when we suppose that blue edges are undirected (\cref{RMK undirected}), since the direction of the blue edge $(x,b)$ would otherwise be inverted by $\sigma$.
In case one wants to avoid using undirected edges, the forest pair diagram would need an expansion of $(x,b)$ in the domain forest and $(x,c)$ in the range forest, which is precisely the same difference between the graph pair diagrams of Figures \ref{fig_undirected_alpha} and \ref{fig_action_alpha}.

\begin{figure}
    \centering
    \begin{minipage}{.485\textwidth}\centering
    \begin{tikzpicture}[font=\small,scale=.875]
        \draw (0,0) node[black,circle,fill,inner sep=1.25]{} node[red,xshift=-.65cm,yshift=-.55cm,align=center]{c\\b} to[out=270,in=90,looseness=1.2] (-1.25,-2) node[black,circle,fill,inner sep=1.25]{} node[yshift=-.35cm]{$1$};
        \draw (1.25,0) node[black,circle,fill,inner sep=1.25]{} -- node[blue,left,align=center]{b\\a} (1.25,-1) node[black,circle,fill,inner sep=1.25]{};
        \draw (2.5,0) node[black,circle,fill,inner sep=1.25]{} node[red,xshift=-.2cm,yshift=-.55cm,align=center]{b\\c} to[out=270,in=90,looseness=1.2] (3.75,-2) node[black,circle,fill,inner sep=1.25]{} node[yshift=-.35cm]{$6$};
        \draw (3.75,0) node[black,circle,fill,inner sep=1.25]{} node[blue,xshift=-.2cm,yshift=-.55cm,align=center]{c\\d} to[out=270,in=90,looseness=1.2] (4.75,-2) node[black,circle,fill,inner sep=1.25]{} node[yshift=-.35cm]{$7$};
        \begin{scope}[xshift=1.25cm]
        \draw (0,-1) to[out=180,in=90,looseness=1.2] (-1.5,-2) node[black,circle,fill,inner sep=1.25]{} node[blue,xshift=-.3cm,yshift=.3cm,align=center]{x\\b} node[yshift=-.35cm]{$2$};
        \draw (0,-1) to[out=195,in=90,looseness=1.2] (-.5,-2) node[black,circle,fill,inner sep=1.25]{} node[red,xshift=-.3cm,yshift=.3cm,align=center]{x\\y} node[yshift=-.35cm]{$3$};
        \draw (0,-1) to[out=345,in=90,looseness=1.2] (.5,-2) node[black,circle,fill,inner sep=1.25]{} node[blue,xshift=-.3cm,yshift=.3cm,align=center]{y\\a} node[yshift=-.35cm]{$4$};
        \draw (0,-1) to[out=0,in=90,looseness=1.2] (1.5,-2) node[black,circle,fill,inner sep=1.25]{} node[red,xshift=-.3cm,yshift=.3cm,align=center]{y\\x} node[yshift=-.35cm]{$5$};
        \end{scope}
    \end{tikzpicture}
    \end{minipage}%
    \begin{minipage}{.03\textwidth}\centering
    \begin{tikzpicture}[scale=1]
        \draw[-to] (0,0) -- (.3,0);
    \end{tikzpicture}
    \end{minipage}%
    \begin{minipage}{.485\textwidth}\centering
    \begin{tikzpicture}[font=\small,scale=.875]
        \draw (0,0) node[black,circle,fill,inner sep=1.25]{} node[red,xshift=-.55cm,yshift=-.55cm,align=center]{c\\b} to[out=270,in=90,looseness=1.2] (-.75,-2) node[black,circle,fill,inner sep=1.25]{} node[yshift=-.35cm]{$\sigma(3)$};
        \draw (1.25,0) node[black,circle,fill,inner sep=1.25]{} node[blue,xshift=-.625cm,yshift=-.55cm,align=center]{b\\a} to[out=270,in=90,looseness=1.2] (.25,-2) node[black,circle,fill,inner sep=1.25]{} node[yshift=-.35cm]{$\sigma(4)$};
        \draw (2.5,0) node[black,circle,fill,inner sep=1.25]{} node[red,xshift=-.7cm,yshift=-.55cm,align=center]{b\\c} to[out=270,in=90,looseness=1.2] (1.25,-2) node[black,circle,fill,inner sep=1.25]{} node[yshift=-.35cm]{$\sigma(5)$};
        \draw (3.75,0) node[black,circle,fill,inner sep=1.25]{} node[blue,xshift=-.2cm,yshift=-.55cm,align=center]{c\\d} -- (3.75,-1) node[black,circle,fill,inner sep=1.25]{};
        \begin{scope}[xshift=3.75cm]
        \draw (0,-1) to[out=180,in=90,looseness=1.2] (-1.5,-2) node[black,circle,fill,inner sep=1.25]{} node[blue,xshift=-.3cm,yshift=.3cm,align=center]{x\\c}node[yshift=-.35cm]{$\sigma(2)$};
        \draw (0,-1) to[out=195,in=90,looseness=1.2] (-.5,-2) node[black,circle,fill,inner sep=1.25]{} node[red,xshift=-.3cm,yshift=.3cm,align=center]{x\\y} node[yshift=-.35cm]{$\sigma(6)$};
        \draw (0,-1) to[out=345,in=90,looseness=1.2] (.5,-2) node[black,circle,fill,inner sep=1.25]{} node[blue,xshift=-.3cm,yshift=.3cm,align=center]{y\\d}node[yshift=-.35cm]{$\sigma(7)$};
        \draw (0,-1) to[out=0,in=90,looseness=1.2] (1.5,-2) node[black,circle,fill,inner sep=1.25]{} node[red,xshift=-.3cm,yshift=.3cm,align=center]{y\\x} node[yshift=-.35cm]{$\sigma(1)$};
        \end{scope}
    \end{tikzpicture}
    \end{minipage}
    \caption{A forest pair diagram of the Airplane replacement system that is equivalent to the graph pair diagram in \cref{fig_action_alpha}.}
    \label{fig_forest_alpha_old}
\end{figure}
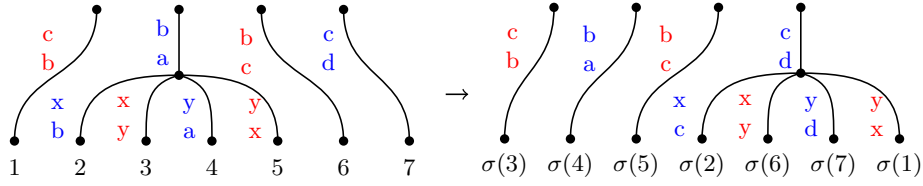

\begin{remark}
For the trio of Thompson groups $F$, $T$ and $V$ and for the Higman-Thompson groups, forest pair diagrams produce the usual tree pair diagrams:
labels here imply that the bijection between the leaves of the domain and the range must be trivial for $F$, cyclic for $T$ or any for $V$.
This suggests that most rearrangement groups sit between Thompson groups $F$ and $V$.

Indeed, every rearrangement group embeds into Thompson's group $V$.
This is because every rearrangement group naturally embeds in the rearrangement group obtained by ``ungluing'' all of the edges in the replacement system (which is, for example, a transformation that would go from $F$ to $V$ or from $T$ to $V$).
This results in a topological full group of a subshift of finite type (see \cite{Matui} or Definition 6.9 from \cite{belk2022type}).
The standard binary encoding of the subshifts of finite type (Corollary 2.12 of \cite{grigorchuk2000automata}) gives a homeomorphism from the subshift to the Cantor space that conjugates the topological full group of the subshift into $V$.
\end{remark}

\phantomsection\label{TXT FPDs convention}
Since we can rename the symbols of a forest expansion (as described at \cpageref{TXT renaming symbols}), we can always simply name each vertex in the range forest $F_R$ by the name of its preimage in $F_D$ under $\sigma$, which for example results in \cref{fig_forest_alpha} for the same element of \cref{fig_forest_alpha_old}.
Again, $(x,b)$ is being mapped to $(b,x)$, which is allowed because blue edges here are undirected (\cref{RMK undirected}).
Then the image under $\sigma$ of a leaf of $F_D$ labeled by $(v,w,z)$ is precisely the leaf of $F_R$ labeled by the same triple, and so the isomorphism $\sigma$ between the leaf graphs is described by the labeling itself and thus the forest pair diagram is entirely determined by the pair $(F_D, F_R)$.
We will always be using this simpler notation from now on, when referring to forest pair diagrams.

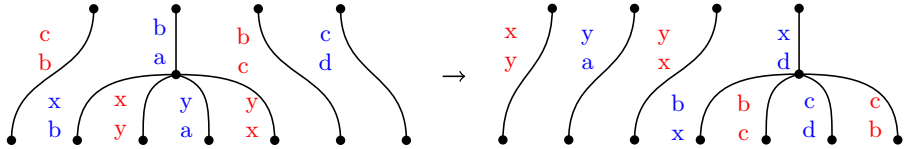
\begin{figure}
    \centering
    \begin{minipage}{.485\textwidth}\centering
    \begin{tikzpicture}[font=\small,scale=.875]
        \draw (0,0) node[black,circle,fill,inner sep=1.25]{} node[red,xshift=-.65cm,yshift=-.55cm,align=center]{c\\b} to[out=270,in=90,looseness=1.2] (-1.25,-2) node[black,circle,fill,inner sep=1.25]{};
        \draw (1.25,0) node[black,circle,fill,inner sep=1.25]{} -- node[blue,left,align=center]{b\\a} (1.25,-1) node[black,circle,fill,inner sep=1.25]{};
        \draw (2.5,0) node[black,circle,fill,inner sep=1.25]{} node[red,xshift=-.2cm,yshift=-.55cm,align=center]{b\\c} to[out=270,in=90,looseness=1.2] (3.75,-2) node[black,circle,fill,inner sep=1.25]{};
        \draw (3.75,0) node[black,circle,fill,inner sep=1.25]{} node[blue,xshift=-.2cm,yshift=-.55cm,align=center]{c\\d} to[out=270,in=90,looseness=1.2] (4.75,-2) node[black,circle,fill,inner sep=1.25]{};
        \begin{scope}[xshift=1.25cm]
        \draw (0,-1) to[out=180,in=90,looseness=1.2] (-1.5,-2) node[black,circle,fill,inner sep=1.25]{} node[blue,xshift=-.3cm,yshift=.3cm,align=center]{x\\b};
        \draw (0,-1) to[out=195,in=90,looseness=1.2] (-.5,-2) node[black,circle,fill,inner sep=1.25]{} node[red,xshift=-.3cm,yshift=.3cm,align=center]{x\\y};
        \draw (0,-1) to[out=345,in=90,looseness=1.2] (.5,-2) node[black,circle,fill,inner sep=1.25]{} node[blue,xshift=-.3cm,yshift=.3cm,align=center]{y\\a};
        \draw (0,-1) to[out=0,in=90,looseness=1.2] (1.5,-2) node[black,circle,fill,inner sep=1.25]{} node[red,xshift=-.3cm,yshift=.3cm,align=center]{y\\x};
        \end{scope}
    \end{tikzpicture}
    \end{minipage}%
    \begin{minipage}{.03\textwidth}\centering
    \begin{tikzpicture}[scale=1]
        \draw[-to] (0,0) -- (.3,0);
    \end{tikzpicture}
    \end{minipage}%
    \begin{minipage}{.485\textwidth}\centering
    \begin{tikzpicture}[font=\small,scale=.875]
        \draw (0,0) node[black,circle,fill,inner sep=1.25]{} node[red,xshift=-.55cm,yshift=-.55cm,align=center]{x\\y} to[out=270,in=90,looseness=1.2] (-.75,-2) node[black,circle,fill,inner sep=1.25]{};
        \draw (1.25,0) node[black,circle,fill,inner sep=1.25]{} node[blue,xshift=-.625cm,yshift=-.55cm,align=center]{y\\a} to[out=270,in=90,looseness=1.2] (.25,-2) node[black,circle,fill,inner sep=1.25]{};
        \draw (2.5,0) node[black,circle,fill,inner sep=1.25]{} node[red,xshift=-.7cm,yshift=-.55cm,align=center]{y\\x} to[out=270,in=90,looseness=1.2] (1.25,-2) node[black,circle,fill,inner sep=1.25]{};
        \draw (3.75,0) node[black,circle,fill,inner sep=1.25]{} node[blue,xshift=-.2cm,yshift=-.55cm,align=center]{x\\d} -- (3.75,-1) node[black,circle,fill,inner sep=1.25]{};
        \begin{scope}[xshift=3.75cm]
        \draw (0,-1) to[out=180,in=90,looseness=1.2] (-1.5,-2) node[black,circle,fill,inner sep=1.25]{} node[blue,xshift=-.3cm,yshift=.3cm,align=center]{b\\x};
        \draw (0,-1) to[out=195,in=90,looseness=1.2] (-.5,-2) node[black,circle,fill,inner sep=1.25]{} node[red,xshift=-.3cm,yshift=.3cm,align=center]{b\\c};
        \draw (0,-1) to[out=345,in=90,looseness=1.2] (.5,-2) node[black,circle,fill,inner sep=1.25]{} node[blue,xshift=-.3cm,yshift=.3cm,align=center]{c\\d};
        \draw (0,-1) to[out=0,in=90,looseness=1.2] (1.5,-2) node[black,circle,fill,inner sep=1.25]{} node[red,xshift=-.3cm,yshift=.3cm,align=center]{c\\b};
        \end{scope}
    \end{tikzpicture}
    \end{minipage}
    \caption{A forest pair diagram for the same element represented \cref{fig_forest_alpha_old} after a renaming of symbols such that the permutation $\sigma$ of the bottom branches is specified by the labeling.}
    \label{fig_forest_alpha}
\end{figure}

\phantomsection\label{TXT renaming FPDs}
If $(F_D, F_R)$ is a forest pair diagram, we can rename symbols of both $F_D$ and $F_R$ in a coherent way, by which we mean that a symbol $a$ is changed to a symbol $b$ in the domain forest $F_D$ if and only if the symbol $a$ is changed to the symbol $b$ in the range forest $F_R$.
If two forest pair diagrams differ from such a renaming of symbols, they clearly represent the same rearrangement.

\phantomsection\label{TXT reduced FPDs}
Similarly to how graph pair diagrams can be expanded (as seen at \cpageref{TXT reduced GPDs}), forest pair diagrams can also be expanded by simply expanding a bottom branch of $F_D$ and its image in $F_R$, using the same symbols for both expansions.
Again, this results in a new (and more redundant) forest pair diagram that represents the same rearrangement, and each rearrangement is represented by a unique reduced forest pair diagram.

Composition of forest pair diagrams is very similar to the usual composition of tree pair diagrams in Thompson groups, the only difference being that one must also rename the symbols of one of the two forest pair diagrams in such a way that the range forest of the first and the domain forest of the second have the same labels in their bottom branches.
Since we will only be composing strand diagrams, we will give more details about this in \cref{SUB SDs composition}, when we describe the composition of strand diagrams.

\section{Strand Diagrams and Replacement Groupoids}
\label{SEC strand diagrams}

Following the ideas of \cite{Belk2007ConjugacyAD}, in this Section we introduce strand diagrams that represent elements of a rearrangement group (\cref{SUB SDs}).
While doing so, it will be natural and useful to introduce a groupoid consisting of generalized rearrangements obtained by allowing different base graph for the domain and the range expansions, while still keeping the same replacement rules (\cref{SUB replacement groupoid}).
Although this is defined here in terms of strand diagrams, this is essentially the same groupoid that was introduced in Subsection 3.1 of \cite{belk2016rearrangement}.

Then we will introduce reduction rules to find a unique minimal reduced diagram for each element (\cref{SUB SDs reduction}), we will see how strand diagrams relate to rearrangements (\cref{SUB SDs are rearrangements}) and we will describe how to compose two strand diagrams (\cref{SUB SDs composition}).

\subsection{Generic Strand Diagrams}
\label{SUB SDs}

\phantomsection\label{TXT glue forests}
Let $(F_D, F_R)$ be a forest pair diagram.
As done in \cref{fig_strand_alpha}, draw $F_D$ and $F_R$ with the range forest turned upside down below the domain forest and join each leaf of $F_D$ to its image in $F_R$, which is the unique leaf of $F_R$ with the same label, using forest pair diagrams as discussed at \cpageref{TXT FPDs convention}.
The result is the \textbf{strand diagram} corresponding to $(F_D, F_R)$, consisting of ``strands'' starting at the top, merging and splitting in the middle and hanging at the bottom, along with labels such as those of forest expansions.
Observe that we can recover $F_D$ and $F_R$ by ``cutting'' the strand diagram in the unique way that separates the merges from the splits.

\begin{figure}\centering
    \begin{subfigure}[b]{.475\textwidth}\centering
    \begin{tikzpicture}[font=\small,scale=.875]
        \draw (0,0) node[black,circle,fill,inner sep=1.25]{} node[red,xshift=-.65cm,yshift=-.55cm,align=center]{c\\b} to[out=270,in=90,looseness=1.2] (-1.25,-2);
        \draw (1.25,0) node[black,circle,fill,inner sep=1.25]{} -- node[blue,left,align=center]{b\\a} (1.25,-1) node[black,circle,fill,inner sep=1.25]{};
        \draw (2.5,0) node[black,circle,fill,inner sep=1.25]{} node[red,xshift=-.2cm,yshift=-.55cm,align=center]{b\\c} to[out=270,in=90,looseness=1.2] (3.75,-2);
        \draw (3.75,0) node[black,circle,fill,inner sep=1.25]{} node[blue,xshift=-.2cm,yshift=-.55cm,align=center]{c\\d} to[out=270,in=90,looseness=1.2] (4.75,-2);
        \begin{scope}[xshift=1.25cm]
        \draw (0,-1) to[out=180,in=90,looseness=1.2] (-1.5,-2);
        \draw (0,-1) to[out=195,in=90,looseness=1.2] (-.5,-2);
        \draw (0,-1) to[out=345,in=90,looseness=1.2] (.5,-2);
        \draw (0,-1) to[out=0,in=90,looseness=1.2] (1.5,-2);
        \end{scope}
        \draw (-1.25,-2) to[out=270,in=90,looseness=.8] (4.75,-4);
        \draw (-.25,-2) to[out=270,in=90,looseness=1.2] (1.75,-4);
        \draw (.75,-2) to[out=270,in=90,looseness=1] (-1.25,-4);
        \draw (1.75,-2) to[out=270,in=90,looseness=1] (-.25,-4);
        \draw (2.75,-2) to[out=270,in=90,looseness=1] (.75,-4);
        \draw (3.75,-2) to[out=270,in=90,looseness=1.2] (2.75,-4);
        \draw (4.75,-2) to[out=270,in=90,looseness=1.2] (3.75,-4);
        \begin{scope}[yshift=-4cm]
        \draw (-1.25,0) to[out=270,in=90,looseness=1.2] (0,-2) node[red,xshift=-.45cm,yshift=.3cm,align=center]{x\\y} node[black,circle,fill,inner sep=1.25]{};
        \draw (-.25,0) to[out=270,in=90,looseness=1.2] (1.25,-2) node[blue,xshift=-.45cm,yshift=.3cm,align=center]{y\\a} node[black,circle,fill,inner sep=1.25]{};
        \draw (.75,0) to[out=270,in=90,looseness=1.2] (2.5,-2) node[red,xshift=-.45cm,yshift=.3cm,align=center]{y\\x} node[black,circle,fill,inner sep=1.25]{};
        \draw (3.75,-1) node[black,circle,fill,inner sep=1.25]{} node[blue,xshift=-.2cm,yshift=-.45cm,align=center]{x\\d} -- (3.75,-2) node[black,circle,fill,inner sep=1.25]{};
        \begin{scope}[xshift=3.75cm]
        \draw (0,-1) to[out=180,in=270,looseness=1.2] (-2,0) node[blue,xshift=-.25cm,yshift=0cm,align=center]{x\\b};
        \draw (0,-1) to[out=180,in=270,looseness=1.2] (-1,0);
        \draw (0,-1) to[out=90,in=270,looseness=1.2] (0,0);
        \draw (0,-1) to[out=0,in=270,looseness=1.2] (1,0);
        \end{scope}
        \end{scope}
    \end{tikzpicture}
    \caption{}
    \label{fig_strand_alpha}
    \end{subfigure}
    \begin{subfigure}[b]{.475\textwidth}\centering
    \begin{tikzpicture}[font=\small,scale=.875]
        \draw (0,0) node[black,circle,fill,inner sep=1.25]{} node[red,xshift=-.65cm,yshift=-.55cm,align=center]{p\\q} to[out=270,in=90,looseness=1.2] (-1.25,-2);
        \draw (1.25,0) node[black,circle,fill,inner sep=1.25]{} -- node[blue,left,align=center]{q\\e} (1.25,-1) node[black,circle,fill,inner sep=1.25]{};
        \draw (2.5,0) node[black,circle,fill,inner sep=1.25]{} node[red,xshift=-.2cm,yshift=-.55cm,align=center]{q\\p} to[out=270,in=90,looseness=1.2] (3.75,-2);
        \draw (3.75,0) node[black,circle,fill,inner sep=1.25]{} node[blue,xshift=-.2cm,yshift=-.55cm,align=center]{p\\f} to[out=270,in=90,looseness=1.2] (4.75,-2);
        \begin{scope}[xshift=1.25cm]
        \draw (0,-1) to[out=180,in=90,looseness=1.2] (-1.5,-2);
        \draw (0,-1) to[out=195,in=90,looseness=1.2] (-.5,-2);
        \draw (0,-1) to[out=345,in=90,looseness=1.2] (.5,-2);
        \draw (0,-1) to[out=0,in=90,looseness=1.2] (1.5,-2);
        \end{scope}
        \draw (-1.25,-2) to[out=270,in=90,looseness=.8] (4.75,-4);
        \draw (-.25,-2) to[out=270,in=90,looseness=1.2] (1.75,-4);
        \draw (.75,-2) to[out=270,in=90,looseness=1] (-1.25,-4);
        \draw (1.75,-2) to[out=270,in=90,looseness=1] (-.25,-4);
        \draw (2.75,-2) to[out=270,in=90,looseness=1] (.75,-4);
        \draw (3.75,-2) to[out=270,in=90,looseness=1.2] (2.75,-4);
        \draw (4.75,-2) to[out=270,in=90,looseness=1.2] (3.75,-4);
        \begin{scope}[yshift=-4cm]
        \draw (-1.25,0) to[out=270,in=90,looseness=1.2] (0,-2) node[red,xshift=-.45cm,yshift=.3cm,align=center]{v\\w} node[black,circle,fill,inner sep=1.25]{};
        \draw (-.25,0) to[out=270,in=90,looseness=1.2] (1.25,-2) node[blue,xshift=-.45cm,yshift=.3cm,align=center]{w\\e} node[black,circle,fill,inner sep=1.25]{};
        \draw (.75,0) to[out=270,in=90,looseness=1.2] (2.5,-2) node[red,xshift=-.45cm,yshift=.3cm,align=center]{w\\v} node[black,circle,fill,inner sep=1.25]{};
        \draw (3.75,-1) node[black,circle,fill,inner sep=1.25]{} node[blue,xshift=-.2cm,yshift=-.45cm,align=center]{v\\f} -- (3.75,-2) node[black,circle,fill,inner sep=1.25]{};
        \begin{scope}[xshift=3.75cm]
        \draw (0,-1) to[out=180,in=270,looseness=1.2] (-2,0) node[blue,xshift=-.25cm,yshift=0cm,align=center]{v\\b};
        \draw (0,-1) to[out=180,in=270,looseness=1.2] (-1,0);
        \draw (0,-1) to[out=90,in=270,looseness=1.2] (0,0);
        \draw (0,-1) to[out=0,in=270,looseness=1.2] (1,0);
        \end{scope}
        \end{scope}
    \end{tikzpicture}
    \caption{}
    \label{fig_strand_renaming}
    \end{subfigure}
    \caption{The strand diagram for the forest pair diagram of \cref{fig_forest_alpha} (A) and a renaming of its symbols (B).}
    \label{fig_strand}
\end{figure}
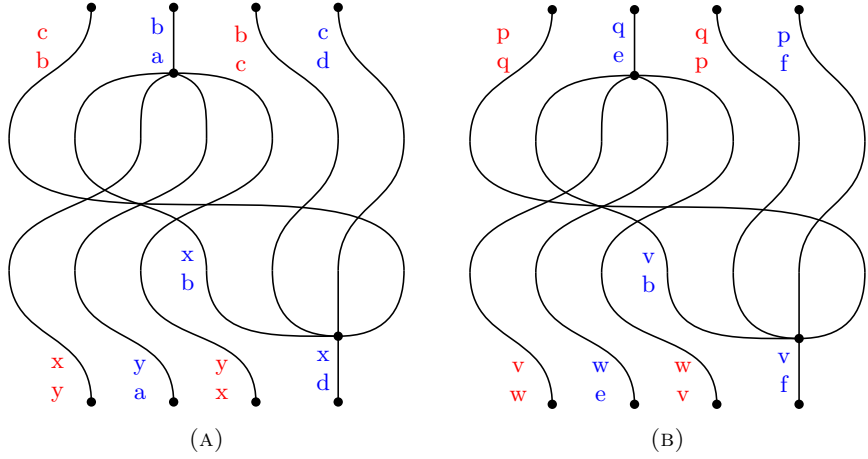

Now, these only cover those strand diagrams that are obtained by gluing forest pair diagrams.
In general, the definition of a strand diagram is the one given below.
However, we will see in \cref{SUB SDs are rearrangements} that, up to reductions (described in \cref{SUB SDs reduction}), these are really all of the strand diagrams that are needed to describe rearrangements.

\begin{definition}\label{DEF SDs}
A \textbf{strand diagram} is a finite acyclic directed graph whose edges are colored and labeled by ordered triples of symbols, together with a rotation system, such that every vertex is either a univalent source, a univalent sink, a split or a merge, and with a given ordering of both the sources and the sinks.

By \textbf{split} we mean a vertex that is the terminus of a single edge and the origin of at least two edges.
Conversely, by \textbf{merge} we mean a vertex that is the origin of a single edge and the terminus of at least two edges.
\end{definition}

In order to avoid confusion between the edges of graph expansions, those of forest expansions (which we called branches) and those of strand diagrams, we will refer to the latter by the term \textbf{strands}.

We consider two strand diagrams equal if there exists a graph isomorphism between them that is compatible with the corresponding rotation systems, and if the two strand diagrams differ by a renaming of the labeling symbols.
An example of such a renaming is depicted in \cref{fig_strand_renaming}.

As done in \cref{fig_strand}, it is convenient to depict sources at the top and sinks at the bottom, both aligned and ordered from left to right in their given order;
this allows us to hide the orientation of the strands, which is always implied to descend from the sources to the sinks.
Moreover, for the sake of clarity we color the label associated to the strand instead of the strand itself, as was done with forest pair diagrams.

The set of \textit{all} strand diagrams is far too large and varied to wield meaningful information about rearrangements (in fact, this general definition does not even take into account any information of the replacement system).
This is why we turn our attention to replacement groupoids: classes of strand diagrams determined by the replacement rules, as described in the following Subsection.

\subsection{Replacement Groupoids}
\label{SUB replacement groupoid}

If $X$ is a replacement graph, we say that a split (merge) is an $\mathbf{X}$\textbf{-split} ($\mathbf{X}$\textbf{-merge}) if, up to renaming symbols, it is a copy of the replacement tree associated to $X$ (described at \cpageref{TXT replacement trees}).
More precisely, an $X$-split consists of a top strand that splits into as many bottom strands as there are edges in $X$, in their given ordering;
if the top strand is labeled by $(\iota,\tau,\epsilon)$, then each bottom strand is labeled by $(v,w,z)$ where $v$ and $w$, respectively, are the origin and terminus of the corresponding edge of $X$, and $z$ is an index that distinguishes parallel edges, up to renaming vertices and indices, with $\iota$ and $\tau$ the initial and terminal vertices of $X$, respectively.
An $X$-merge is the same, with inverted direction of strands.

We call \textbf{branching strand} the unique top strand of a split or the unique bottom strand of a merge.
Additionally, we say that a symbol is \textbf{generated} by a split (merge) if it appears among the symbols included in the split (merge) excluding the branching strand, i.e., if it represents a new vertex in the graph expansion.

\begin{definition}\label{DEF R-branching}
Let $(R, \mathrm{C})$ be a set of replacement rules (\cref{DEF replacement}), with $R = \{ X_i \mid i \in \mathrm{C} \}$.
We say that a strand diagram is \textbf{$\bm{R}$-branching} if:
\begin{enumerate}
    \item every split and every merge is an $X_i$-split or an $X_i$-merge for some $i \in \mathrm{C}$;
    \item whenever a sequence of $k$ (possibly $k=1$) merges is immediately followed by a sequence of $k$ splits that mirror each other, the branching strands of both sequences are labeled exactly in the same way, in the same order (more precisely, by sequences that ``mirror'' each other we mean that the sequence of merges is followed by a sequence of splits that is precisely its inverse if read without labels);
    \item the same symbol cannot be generated by different splits whose branching strands have different labels;
    likewise, the same symbol cannot be generated by different merges whose branching strands have different labels.
\end{enumerate}
\end{definition}

These conditions are invariant under renaming symbols, so this definition makes sense.
\cref{fig_notRbranching} depicts examples of strand diagrams that are not $R$-branching.

\begin{figure}\centering
    \begin{subfigure}[t]{.33\textwidth}\centering
    \begin{tikzpicture}[font=\small,scale=1]
        \draw (0,0) to[out=180,in=270,looseness=1.2] (-1.25,1) node[black,circle,fill,inner sep=1.25]{} node[red,xshift=-.3cm,yshift=-.3cm,align=center]{v\\a};
        \draw (0,0) -- (0,1) node[black,circle,fill,inner sep=1.25]{} node[blue,xshift=-.3cm,yshift=-.3cm,align=center]{a\\b};
        \draw (0,0) to[out=0,in=270,looseness=1.2] (1.25,1) node[black,circle,fill,inner sep=1.25]{} node[red,xshift=-.3cm,yshift=-.3cm,align=center]{a\\w};
        \draw (0,0) node[black,circle,fill,inner sep=1.25]{} -- node[red,left,align=center]{v\\w} (0,-1) node[black,circle,fill,inner sep=1.25]{};
        \draw (0,-1) to[out=180,in=90,looseness=1.2] (-1.25,-2) node[black,circle,fill,inner sep=1.25]{} node[red,xshift=-.3cm,yshift=.3cm,align=center]{v\\x};
        \draw (0,-1) -- (0,-2) node[black,circle,fill,inner sep=1.25]{} node[blue,xshift=-.3cm,yshift=.3cm,align=center]{x\\y};
        \draw (0,-1) to[out=0,in=90,looseness=1.2] (1.25,-2) node[black,circle,fill,inner sep=1.25]{} node[red,xshift=-.3cm,yshift=.3cm,align=center]{x\\w};
    \end{tikzpicture}
    \caption{Condition (2) does not hold.}
    \label{fig_notRbranching_2}
    \end{subfigure}
    \hfill
    \begin{subfigure}[t]{.57\textwidth}\centering
    \begin{tikzpicture}[font=\small,scale=1]
        \draw (0,0) node[black,circle,fill,inner sep=1.25]{} -- node[red,left,align=center]{x\\y} (0,-1) node[black,circle,fill,inner sep=1.25]{};
        \draw (0,-1) to[out=180,in=90,looseness=1.2] (-1,-2) node[black,circle,fill,inner sep=1.25]{} node[red,xshift=-.3cm,yshift=.3cm,align=center]{x\\a};
        \draw (0,-1) -- (0,-2) node[black,circle,fill,inner sep=1.25]{} node[blue,xshift=-.3cm,yshift=.3cm,align=center]{a\\b};
        \draw (0,-1) to[out=0,in=90,looseness=1.2] (1,-2) node[black,circle,fill,inner sep=1.25]{} node[red,xshift=-.3cm,yshift=.3cm,align=center]{a\\y};
        \begin{scope}[xshift=3.5cm]
            \draw (0,0) node[black,circle,fill,inner sep=1.25]{} -- node[blue,left,align=center]{v\\w} (0,-1) node[black,circle,fill,inner sep=1.25]{};
            \draw (0,-1) to[out=180,in=90,looseness=1.2] (-1.5,-2) node[black,circle,fill,inner sep=1.25]{} node[blue,xshift=-.3cm,yshift=.3cm,align=center]{b\\v};
            \draw (0,-1) to[out=195,in=90,looseness=1.2] (-.5,-2) node[black,circle,fill,inner sep=1.25]{} node[red,xshift=-.3cm,yshift=.3cm,align=center]{b\\c};
            \draw (0,-1) to[out=345,in=90,looseness=1.2] (.5,-2) node[black,circle,fill,inner sep=1.25]{} node[blue,xshift=-.3cm,yshift=.3cm,align=center]{c\\w};
            \draw (0,-1) to[out=0,in=90,looseness=1.2] (1.5,-2) node[black,circle,fill,inner sep=1.25]{} node[red,xshift=-.3cm,yshift=.3cm,align=center]{c\\b};
        \end{scope}
    \end{tikzpicture}
    \caption{Condition (3) does not hold because the symbol b appears on two different splits.}
    \label{fig_notRbranching_3}
    \end{subfigure}
    \caption{Strand diagrams that are not $R$-branching, using the replacement rules of the Airplane replacement system.}
    \label{fig_notRbranching}
\end{figure}
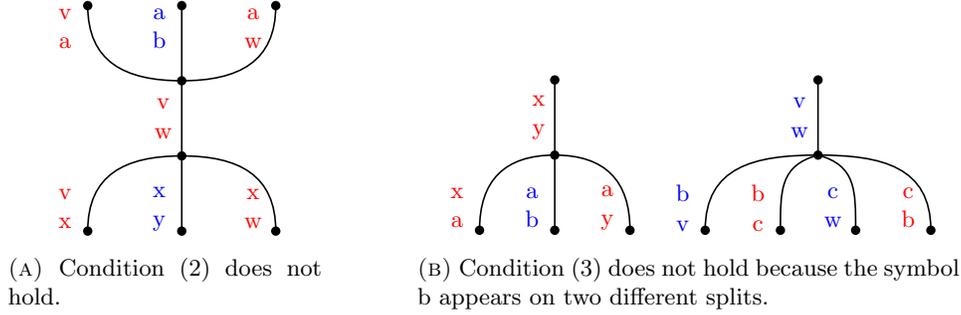

\begin{remark}
Although these conditions may seem artificial at first, they arise from the following observations, while keeping in mind that splits and merges correspond to expansions in the domain and in the range, respectively.
\begin{enumerate}
    \item The first condition essentially states that splits and merges are shaped and labeled like the branchings of forest expansions based on the replacement rules $R$;
    \item The second condition ensures that Type 2 reductions, which will be described in the following \cref{SUB SDs reduction}, can be performed whenever merges are followed by splits (this situation does not arise when pasting the domain and the range of a forest pair diagram as explained at \cpageref{TXT glue forests});
    \item The third condition tells us that, in both the underlying domain and range graph expansions, vertices generated by an edge expansion have distinct names that are not used for other vertices.
\end{enumerate}
These conditions are verified in strand diagrams obtained by gluing together the domain and the range forests of a forest pair diagrams as explained at \cpageref{TXT glue forests}.
Thus strand diagrams obtained by gluing the two forests of a a forest pair diagram based on the replacement rules $(R, \mathrm{C})$ produces an $R$-branching strand diagram.
\end{remark}

In \cref{LEM cut SDs} and the discussion that follows it we will see that $R$-branching strand diagrams, once they have been reduced (as explained later in the next \cref{SUB SDs reduction}), can be ``cut'' in a unique way that returns a forest pair diagram.
This means that the family of $R$-branching strand diagrams is really the one that we should be looking into in order to study rearrangement groups.

\begin{definition}
The \textbf{replacement groupoid} associated to the set of replacement rules $(R, \mathrm{C})$ is the set of all $R$-branching strand diagrams.
\end{definition}

The reasons as to why this is a groupoid are given later in \cref{SUB SDs composition}, when we define the composition of $R$-branching strand diagrams.

\subsection{Reductions of Strand Diagrams}
\label{SUB SDs reduction}

An $X$-merge being followed or preceded by a properly aligned $X$-split produce a reduction of the strand diagram in the following way.

\begin{definition}
A \textbf{reduction} of an $R$-branching strand diagram is either of the two types of moves shown in \cref{fig_SD_reductions}.
An $R$-branching strand diagram is \textbf{reduced} if no reduction can be performed on it.
Two $R$-branching strand diagrams are \textbf{equivalent} if one can be obtained from the other by a sequence of reductions and inverse reductions.
\end{definition}

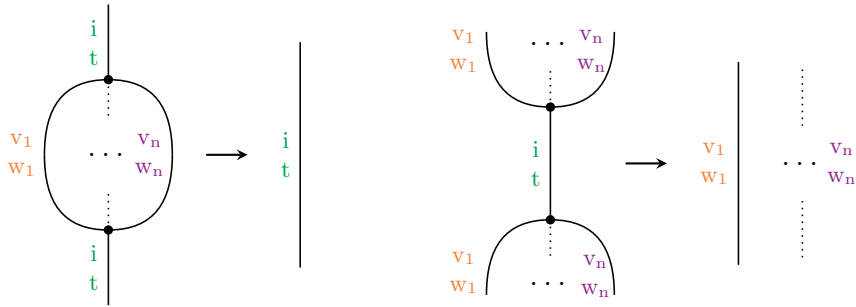
\begin{figure}\centering
    \begin{subfigure}[t]{.45\textwidth}\centering
    \begin{tikzpicture}[font=\small]
        \draw (0,0) -- node[Green,left,align=center]{i\\t} (0,-1) node[black,circle,fill,inner sep=1.25]{};
        \draw (0,-1) to[out=180,in=90,looseness=1.2] (-.85,-2) node[Orange,xshift=-.3cm,yshift=0cm,align=center]{v\textsubscript{1}\\w\textsubscript{1}};
        \draw[dotted] (0,-1) -- (0,-1.5);
        \draw (0,-1) to[out=0,in=90,looseness=1.2] (.85,-2) node[Plum,xshift=-.3cm,yshift=0cm,align=center]{v\textsubscript{n}\\w\textsubscript{n}};
        \draw (0,-2) node{\Large$\dots$};
        \draw (0,-3) to[out=180,in=270,looseness=1.2] (-.85,-2);
        \draw[dotted] (0,-3) -- (0,-2.5);
        \draw (0,-3) to[out=0,in=270,looseness=1.2] (.85,-2);
        \draw (0,-4) -- node[Green,left,align=center]{i\\t} (0,-3) node[black,circle,fill,inner sep=1.25]{};
        \draw[thick,-stealth] (1.3,-2) -- (1.85,-2);
        \draw (2.55,-.5) -- node[Green,left,align=center]{i\\t} (2.55,-3.5);
    \end{tikzpicture}
    \label{fig_SD_red1}
    \caption{Type 1 reduction: an $X$-split on top of an $X$-merge produces a single strand.}
    \end{subfigure}
    \hfill
    \begin{subfigure}[t]{.5\textwidth}\centering
    \begin{tikzpicture}[font=\small]
        \draw (0,-1) to[out=180,in=270,looseness=1.2] (-.85,0)  node[Orange,xshift=-.3cm,yshift=-.25cm,align=center]{v\textsubscript{1}\\w\textsubscript{1}};
        \draw[dotted] (0,-1) -- (0,-.5);
        \draw (0,-1) to[out=0,in=270,looseness=1.2] (.85,0)  node[Plum,xshift=-.3cm,yshift=-.25cm,align=center]{v\textsubscript{n}\\w\textsubscript{n}};
        \draw (0,-.15) node{\Large$\dots$};
        \draw (0,-2.5) node[black,circle,fill,inner sep=1.25]{} -- node[Green,left,align=center]{i\\t} (0,-1) node[black,circle,fill,inner sep=1.25]{};
        \draw (0,-2.5) to[out=180,in=90,looseness=1.2] (-.85,-3.5) node[Orange,xshift=-.3cm,yshift=.3cm,align=center]{v\textsubscript{1}\\w\textsubscript{1}};
        \draw[dotted] (0,-2.5) -- (0,-3);
        \draw (0,-2.5) to[out=0,in=90,looseness=1.2] (.85,-3.5) node[Plum,xshift=-.25cm,yshift=.25cm,align=center]{v\textsubscript{n}\\w\textsubscript{n}};
        \draw (0,-3.35) node{\Large$\dots$};
        \draw[thick,-stealth] (1,-1.75) -- (1.55,-1.75);
        \draw (2.5,-.4) -- node[Orange,left,align=center]{v\textsubscript{1}\\w\textsubscript{1}} (2.5,-3.1);
        \draw[dotted] (3.35,-.5) -- (3.35,-1.25);
        \draw (3.35,-1.75) node{\Large$\dots$};
        \draw[dotted] (3.35,-3) -- (3.35,-2.25);
        \draw (4.2,-.4) -- node[Plum,left,align=center]{v\textsubscript{n}\\w\textsubscript{n}} (4.2,-3.1);
    \end{tikzpicture}
    \caption{Type 2 reduction: an $X$-merge on top of an $X$-split produces multiple strands.}
    \label{fig_SD_red2}
    \end{subfigure}
    \caption{A schematic depiction of reductions of $R$-branching strand diagrams. Each strand should be labeled and colored according to the replacement rules.}
    \label{fig_SD_reductions}
\end{figure}

Observe that the reduction of an $R$-branching strand diagram is an $R$-branching strand diagram.
Also note that reductions of a forest pair diagram (discussed at \cpageref{TXT reduced FPDs}) correspond to Type 1 reductions of the associated strand diagram.
Instead, Type 2 reductions cannot appear when gluing together forests from a forest pair diagram (as seen at \cpageref{TXT glue forests}), but they often emerge in the composition of $R$-branching strand diagrams, which is described later in \cref{SUB SDs composition}.

\begin{lemma}
\label{LEM reduced SD}
Every $R$-branching strand diagram is equivalent to a unique reduced $R$-branching strand diagram.
\end{lemma}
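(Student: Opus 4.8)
The plan is to regard the two reduction moves as an abstract rewriting system on the set of $R$-branching strand diagrams (taken up to graph isomorphism respecting rotation systems and up to renaming of labelling symbols) and to apply Newman's lemma: a terminating, locally confluent rewriting system is confluent, and a confluent terminating system has a unique normal form for every element. Existence of a reduced representative of each equivalence class will then be immediate from termination, and uniqueness will follow from confluence: if two reduced diagrams are equivalent, they are joined by a zig-zag of reductions and inverse reductions, so by the Church--Rosser property they have a common reduct, which, being already reduced, must equal both.

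\emph{Termination.} I would take as complexity measure the number $\nu(D)$ of internal vertices (splits and merges) of $D$. Inspecting \cref{fig_SD_reductions}, a Type 1 reduction deletes one $X$-split together with the $X$-merge directly below it, and a Type 2 reduction deletes one $X$-merge together with the $X$-split directly below it; neither move ever creates a new split or merge. Hence each reduction strictly decreases $\nu(D)$ (by exactly $2$), and since $\nu(D)$ is a non-negative integer there is no infinite chain of reductions. In particular, every $R$-branching strand diagram reduces in finitely many steps to a reduced one.

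\emph{Local confluence.} Suppose $D$ admits two distinct reductions $D \to D_1$ and $D \to D_2$; I must exhibit a common reduct of $D_1$ and $D_2$. Each reduction is localized at an adjacent pair of internal vertices: a Type 1 move at a split $s$ and the $X$-merge into which all of the bottom strands of $s$ feed, and a Type 2 move at a merge $m$ and the $X$-split that the bottom strand of $m$ feeds, the two mirroring each other. If the two vertex-pairs are disjoint, then they are also edge-disjoint (a shared strand between the two configurations would force a shared endpoint, and a split can never coincide with a merge), so the two moves act on disjoint parts of $D$ and commute, yielding the same diagram in either order. Otherwise the two configurations overlap, and a short case analysis shows that the only possibilities are: (i) the split $s$ of a Type 1 move is the lower split of a Type 2 move, giving a configuration $X$-merge $\to$ $X$-split $\to$ $X$-merge; or (ii) the merge $m$ of a Type 1 move is the upper merge of a Type 2 move, giving $X$-split $\to$ $X$-merge $\to$ $X$-split --- all other overlaps being excluded because splits and merges cannot be identified and because two Type 1 (resp.\ two Type 2) moves sharing a vertex necessarily coincide. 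In each of the two cases I would carry out both moves and check that the resulting diagrams agree up to a renaming of symbols: here the mirroring hypothesis of the Type 2 move together with condition (2) of \cref{DEF R-branching} guarantees that the branching strands involved carry matching labels, so the single strand produced by the Type 1 move really does inherit the common label of the two branching strands it replaces, while condition (3) ensures that the fresh symbols generated by the surviving split (or merge) clash with nothing and can be matched by a renaming. Thus in both critical cases the two one-step reducts actually coincide, which gives local confluence.

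\emph{Conclusion and main difficulty.} By Newman's lemma the reduction system is confluent, and a confluent terminating rewriting system has a unique normal form for each element. Given an $R$-branching strand diagram $D$, a reduced form exists by termination; if $E_1$ and $E_2$ are reduced and equivalent to $D$, then $E_1$ and $E_2$ are connected (through $D$) by reductions and inverse reductions, hence by confluence share a common reduct $E$, and since $E_1$ and $E_2$ are already reduced we get $E_1 = E = E_2$. The main obstacle is the critical-pair verification in cases (i) and (ii): the combinatorics of the moves is routine, but one has to be careful that the labels and the freshly generated symbols line up under a renaming --- which is precisely what conditions (2) and (3) in the definition of an $R$-branching strand diagram were set up to provide.
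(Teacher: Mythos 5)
Your proposal is correct and follows essentially the same route as the paper: Newman's lemma, termination via a strictly decreasing count (you count splits and merges, the paper counts strands — equivalent), and local confluence by noting that disjoint reductions commute while the only genuine overlaps are the merge--split--merge and split--merge--split configurations (the paper's \cref{fig_strand_diamond}), where both one-step reducts coincide. The paper's proof is exactly this argument, with the two critical configurations handled by direct inspection just as you propose.
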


\begin{proof}
Using the standard argument, by Newman's Diamond Lemma from \cite{NewmanDiamond} it suffices to prove that the directed graph whose set of vertices is the set of $R$-branching strand diagrams and whose edges are reductions satisfies the two following properties:
\begin{itemize}
    \item it is \textbf{terminating}, i.e., there is no infinite directed path (or, equivalently, there is no infinite chain of reductions of $R$-branching strand diagrams);
    \item it is \textbf{locally confluent}, i.e., if $f \longrightarrow g$ and $f \longrightarrow h$ are two edges (which correspond to reductions of the same $R$-branching strand diagram $f$), then there exist finite possibly empty directed paths from $g$ and from $h$ to a common vertex (which correspond to finite sequences of reductions from $g$ and $h$ to a common $R$-branching strand diagram).
\end{itemize}

Since each reduction strictly decreases the number of strands in a diagram, it is clear that the directed graph is terminating, so we only need to prove the local confluence.

Suppose $f, g$ and $h$ are $R$-branching strand diagrams such that $f \overset{A}{\longrightarrow} g$ and $f \overset{B}{\longrightarrow} h$ are distinct reductions.
If both $A$ and $B$ are reductions of the same type, then they must be disjoint, by which we mean that they concern different splits and merges of $f$.
In this case it is clear that the reduction $B$ can be applied to $g$ and the reduction $A$ can be applied to $h$, producing the same $R$-branching strand diagram.
If instead the reductions $A$ and $B$ are of different type, say that $A$ is of Type 1 and $B$ is of Type 2, then they are either disjoint, in which case we are done for the same reason we just discussed, or we are in one of the situations described in \cref{fig_strand_diamond}.
Observe that, in either of the two situations, whether we decide to perform the reduction $A$ or $B$, the resulting strand diagram is the same, i.e., $g = h$, and so we are done.
\end{proof}

\begin{figure}\centering
    \begin{subfigure}[t]{.4\textwidth}\centering
    \begin{tikzpicture}[font=\small,scale=1]
        \draw (0,0) to[out=180,in=270,looseness=1.2] (-.85,1) node[Orange,xshift=-.3cm,yshift=-.3cm,align=center]{v\textsubscript{1}\\w\textsubscript{1}};
        \draw[dotted] (0,0) -- (0,.5);
        \draw (0,.85) node{\Large$\dots$};
        \draw (0,0) to[out=0,in=270,looseness=1.2] (.85,1) node[Plum,xshift=-.3cm,yshift=-.3cm,align=center]{v\textsubscript{1}\\w\textsubscript{1}};
        \draw (0,0) node[black,circle,fill,inner sep=1.25]{} -- node[Green,left,align=center]{i\\t} (0,-1) node[black,circle,fill,inner sep=1.25]{};
        \draw (0,-1) to[out=180,in=90,looseness=1.2] (-.85,-2) node[Orange,xshift=-.3cm,yshift=0cm,align=center]{v\textsubscript{1}\\w\textsubscript{1}};
        \draw[dotted] (0,-1) -- (0,-1.5);
        \draw (0,-1) to[out=0,in=90,looseness=1.2] (.85,-2) node[Plum,xshift=-.3cm,yshift=0cm,align=center]{v\textsubscript{n}\\w\textsubscript{n}};
        \draw (0,-2) node{\Large$\dots$};
        \draw (0,-3) to[out=180,in=270,looseness=1.2] (-.85,-2);
        \draw[dotted] (0,-3) -- (0,-2.5);
        \draw (0,-3) to[out=0,in=270,looseness=1.2] (.85,-2);
        \draw (0,-4) -- node[Green,left,align=center]{i\\t} (0,-3) node[black,circle,fill,inner sep=1.25]{};
    \end{tikzpicture}
    \end{subfigure}
    \begin{subfigure}[t]{.4\textwidth}\centering
    \begin{tikzpicture}[font=\small,scale=1]
        \draw (0,0) -- node[Green,left,align=center]{i\\t} (0,-1) node[black,circle,fill,inner sep=1.25]{};
        \draw (0,-1) to[out=180,in=90,looseness=1.2] (-.85,-2) node[Orange,xshift=-.3cm,yshift=0cm,align=center]{v\textsubscript{1}\\w\textsubscript{1}};
        \draw[dotted] (0,-1) -- (0,-1.5);
        \draw (0,-1) to[out=0,in=90,looseness=1.2] (.85,-2) node[Plum,xshift=-.3cm,yshift=0cm,align=center]{v\textsubscript{n}\\w\textsubscript{n}};
        \draw (0,-2) node{\Large$\dots$};
        \draw (0,-3) to[out=180,in=270,looseness=1.2] (-.85,-2);
        \draw[dotted] (0,-3) -- (0,-2.5);
        \draw (0,-3) to[out=0,in=270,looseness=1.2] (.85,-2);
        \draw (0,-4) node[black,circle,fill,inner sep=1.25]{} -- node[Green,left,align=center]{i\\t} (0,-3) node[black,circle,fill,inner sep=1.25]{};
        \draw (0,-4) to[out=180,in=90,looseness=1.2] (-.85,-5) node[Orange,xshift=-.3cm,yshift=.3cm,align=center]{v\textsubscript{1}\\w\textsubscript{1}};
        \draw[dotted] (0,-4) -- (0,-4.5);
        \draw (0,-4.85) node{\Large$\dots$};
        \draw (0,-4) to[out=0,in=90,looseness=1.2] (.85,-5) node[Plum,xshift=-.3cm,yshift=.3cm,align=center]{v\textsubscript{n}\\w\textsubscript{n}};
    \end{tikzpicture}
    \end{subfigure}
    \caption{Strand diagrams where two non-disjoint reductions of Type 1 and 2 are possible.}
    \label{fig_strand_diamond}
\end{figure}
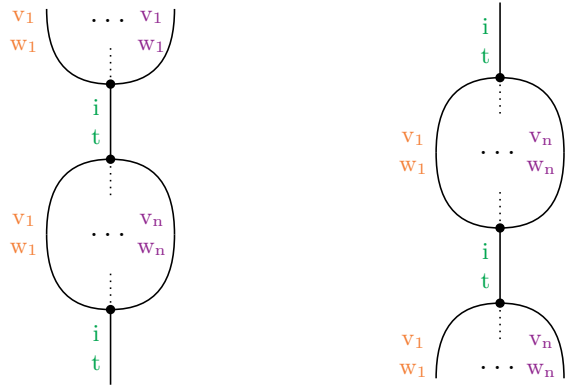

\subsection{Rearrangements as Strand Diagrams}
\label{SUB SDs are rearrangements}

In this Subsection we will see that the replacement groupoid associated to $R$ contains every rearrangement of any replacement system based on the same replacement rules $(R, \mathrm{C})$, independently from the base graph.
After describing the composition of $R$-branching strand diagrams in \cref{SUB SDs composition}, this will imply that every rearrangement group is a subgroupoid of the replacement groupoid associated to its replacement rules.

\begin{lemma}\label{LEM cut SDs}
Each reduced $R$-branching strand diagram can be cut in a unique way such that the two resulting parts of the diagram are two forest expansions of the replacement systems $(A_0, R, \mathrm{C})$ and $(B_0, R, \mathrm{C})$, where $A_0$ and $B_0$ are the base graphs corresponding to the labelings of the sources and sinks, respectively.
Moreover, there is a unique way of gluing them back together into the original strand diagram, and this is described by a unique bijection between the leaves of the two forest expansions that preserves labels, meaning that this permutation is a graph isomorphism between the leaf graph of the domain forest and the one of the range forest.
\end{lemma}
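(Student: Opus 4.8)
The plan is to show that in a reduced $R$-branching strand diagram $D$ the splits are separated from the merges, in the sense that no merge can be immediately followed by a split; everything else then falls out by unwinding the definitions. So the first and main step is the claim: \emph{there is no strand $s$ of $D$ whose origin is a merge $u$ and whose terminus is a split $v$}. Suppose such an $s$ existed. Since a merge has a unique outgoing strand and a split a unique incoming strand, $s$ is the branching strand of both $u$ and $v$. By condition (1) of \cref{DEF R-branching}, $u$ is an $X_i$-merge and $v$ is an $X_j$-split for some colors $i,j$; the branching strand of an $X_i$-merge is colored $i$ and that of an $X_j$-split is colored $j$, and since both equal $s$ we get $i=j$. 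Read without labels, an $X_i$-merge is exactly the inverse of an $X_i$-split, so condition (2) applies (with $k=1$) and forces $u$ and $v$ to carry matching labels throughout (compare \cref{fig_notRbranching_2}); that is, the configuration is exactly a Type~2 reduction site (\cref{fig_SD_red2}), contradicting that $D$ is reduced and proving the claim.

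Granting the claim, I would split the vertex set of $D$ into the \emph{upper} set $U$ of sources and splits and the \emph{lower} set $L$ of sinks and merges. Every strand of $D$ then runs from $U$ to $U$, from $U$ to $L$, or from $L$ to $L$: a strand out of a merge ends at a merge or a sink by the claim, no strand enters a source, and no strand leaves a sink. Let $C$ be the set of strands from $U$ to $L$ and cut each $s \in C$ into a new sink on its $U$-side and a new source on its $L$-side, both bearing the label of $s$; this separates $D$ into an upper part $F_D$ (the vertices of $U$, the strands among them, and the new sinks) and a lower part $F_R$ (the vertices of $L$, the strands among them, and the new sources). The diagram $F_D$ is acyclic, its sources have in-degree $0$ and each of its splits has in-degree $1$ with the incoming strand lying in $U$ (again by the claim), so $F_D$ is a disjoint union of rooted trees with the sources of $D$ as roots and the new sinks as leaves; by conditions (1) and (3) each branch-vertex is an $X_i$-split whose generated symbols do not clash, so $F_D$ is a forest expansion of the base forest of $(A_0,R,\mathrm{C})$, where $A_0$ is the graph read off from the labels of the sources of $D$ — an expanding replacement system, since $R$ is expanding and $A_0$ has no isolated vertices. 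Turned upside down, $F_R$ is likewise a forest expansion of $(B_0,R,\mathrm{C})$ with $B_0$ read off from the labels of the sinks of $D$.

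Finally, the leaves of $F_D$ and, after the flip, the leaves of $F_R$ are both literally the strands of $C$, so there is a canonical bijection $\sigma$ between them identifying the two ends of each cut strand; it preserves labels by construction, and since a leaf graph is entirely determined by the labeling of its bottom branches (see \cpageref{TXT FPDs convention}), $\sigma$ is a graph isomorphism between the two leaf graphs. Thus $(F_D,F_R,\sigma)$ is a forest pair diagram, and re-joining the cut strands recovers $D$. For uniqueness, in any cut of $D$ into forest expansions the internal branch-vertices of the upper part must be splits and those of the upside-down lower part must be merges, the upper roots must be sources and the lower roots sinks; hence every split and every source lands in the upper part and every merge and every sink in the lower part, so the cut strands are forced to be precisely the strands from $U$ to $L$. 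The one genuinely delicate point is the main step: pinning down exactly which of the $R$-branching conditions is used and how, and in particular that condition (2) forbids a merge and the split immediately below it from generating different symbols. Once the separation of splits from merges is in hand, the remaining verifications are routine manipulations of the definitions of forest expansion and forest pair diagram.
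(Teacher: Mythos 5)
Your proposal is correct and follows essentially the same route as the paper's proof: the heart of both arguments is that in a reduced diagram a merge can never feed directly into a split, since conditions (1)–(2) of \cref{DEF R-branching} force such a configuration to be a Type 2 reduction site, after which conditions (1) and (3) show the two pieces obtained by severing the split-to-merge transition strands are forest expansions. Your packaging via the partition into sources/splits versus merges/sinks (rather than the paper's source-to-sink paths) and your explicit color-matching step $i=j$ are only cosmetic refinements of the same argument.
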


\begin{proof}
Let $f$ be a reduced $R$-branching strand diagram.
Consider the family of all directed paths starting from a source and ending at a sink of the diagram.
We claim that each of these paths contains a unique strand $s$ such that the strands preceding $s$ in the path all belong to splits and not merges, whereas the strands following $s$ all belong to merges and not splits.
Indeed, suppose that some such path does not contain any such strand $s$.
Then there is some strand that is preceded by a merge and followed by a split.
Because of the second requirement of the definition of $R$-branching (\cref{DEF R-branching}), these merge and split must be labeled in the same way.
This is a contradiction, because it gives place to a reduction of type 2, but we required $f$ to be reduced.

The uniqueness of such $s$ is evident: the ``cut'' consists of severing each of the unique strands $s$ we just found.
This results in two forest expansions as in the statement of the Lemma: each of the two forests is shaped and labeled like a forest expansion because of the first and third requirement of the definition of $R$-branching (\cref{DEF R-branching}).
\end{proof}

This Lemma essentially tells us that each $R$-branching strand diagram corresponds to a generalized rearrangement between expansions obtained using the the same replacement rules $(R, \mathrm{C})$, but possibly two distinct base graphs, as we do not require the top and bottom strands to correspond to the same graph.
After being reduced, such a diagram can be cut in a unique way that produces a generalized forest pair diagram where the roots of the two forests need not represent the same base graph.

\phantomsection\label{TXT X-SDs}
Now, consider an $R$-branching strand diagram whose sources and sinks are in the same amount, with the same labels (up to renaming symbols) and colors, in the same order.
Then sources and sinks both identify the same graph $X_0$, along with the same ordering of its edges given by the ordering of sources and sinks.
Such an $R$-branching strand diagram is called an \textbf{$\bm{\mathcal{X}}$-strand diagram}, where $\mathcal{X}$ is the replacement system $(X_0, R, \mathrm{C})$.
By the discussion in the previous paragraph, it is clear that each reduced $\mathcal{X}$-strand diagram corresponds to a unique rearrangement of $\mathcal{X}$, as in this case the two graph expansions are realized starting from the same base graph.

Conversely, we previously noted that rearrangements can be represented by forest pair diagrams and we have already seen at the beginning of this Section (\cpageref{TXT glue forests}) that we can glue the domain and the range forests of an $\mathcal{X}$-forest pair diagram to obtain a strand diagram.
This results in an $\mathcal{X}$-strand diagram.
Then, given a replacement system $\mathcal{X} = (X_0, R, \mathrm{C})$, we have that, up to reductions, rearrangements of $\mathcal{X}$ correspond uniquely to $\mathcal{X}$-strand diagrams.

With the aid of \cref{LEM cut SDs}, we are now finally ready to define the composition of $R$-branching strand diagrams.

\subsection{Composition of Strand Diagrams}
\label{SUB SDs composition}

Let $f$ and $g$ be $R$-branching strand diagrams.
We can compute their composition $f \circ g$ when the following requirements are met:
\begin{enumerate}[label=(\Alph*)]
    \item the number of sources of $f$ equals the number of sinks of $g$;
    \item the sources of $f$ and the sinks of $g$, in their given order, share the same color and, up to renaming symbols, the same labels;
\end{enumerate}
Observe that these conditions mean that the graphs represented by the sources of $f$ and the sinks of $g$ are the same, up to renaming their vertices, and the ordering of their edges is the same.

When these requirements hold, we can always find a renaming of the symbols of $f$ such that the diagram obtained by gluing the sinks of $g$ with the sources of $f$ in their given order is an $R$-branching strand diagram (\cref{DEF R-branching}).
Indeed, because of requirements A and B we can rename the symbols labeling the sources of $f$ so that they match the labeling of the sinks of $g$, and then we can continue renaming the diagram for $f$ downwards from its sources in the following way.
Because of \cref{LEM cut SDs}, we can identify a ``top half'' of $f$ consisting of splits and devoid of merges.
It is clear that, once the splits of $f$ have been renamed, the renaming of its sinks will be uniquely determined, so it suffices to rename the ``top half'' of $f$.
In order to do so, starting from the sources of $f$, rename each split distinguishing between two cases depending on the label $(v,w,z)$ of the branching strand of the split:

\begin{itemize}
    \item if $(v,w,z)$ also labels some branching strand of a merge of $g$ (which must then be unique), then rename the bottom strands of the split of $f$ using the same symbols that also appear in the top strands of the merge of $g$;
    \item if $(v,w,z)$ does not label any branching strand of any merge of $g$, then rename the bottom strands of the split of $f$ using new symbols that do not appear elsewhere.
\end{itemize}
Then the composition $f \circ g$ is the strand diagram obtained by renaming $f$ as described above, gluing the sinks of $g$ with the sources of $f$ in their given order and then reducing as explained in \cref{SUB SDs reduction}.
Indeed, the strand diagram obtained by gluing $g$ and $f$ is $R$-branching (\cref{DEF R-branching}), as the first condition is trivially satisfied and the second and third hold because of the steps described above.

An example is given in \cref{fig_strand_composition}: as shown there, when the above-mentioned requirements of $f$ and $g$ are met, it is convenient to draw $g$ right above $f$ with the respective sinks and sources aligned, and it can be useful to write on the right of each strand the renamed symbols when computing the renaming of symbols.

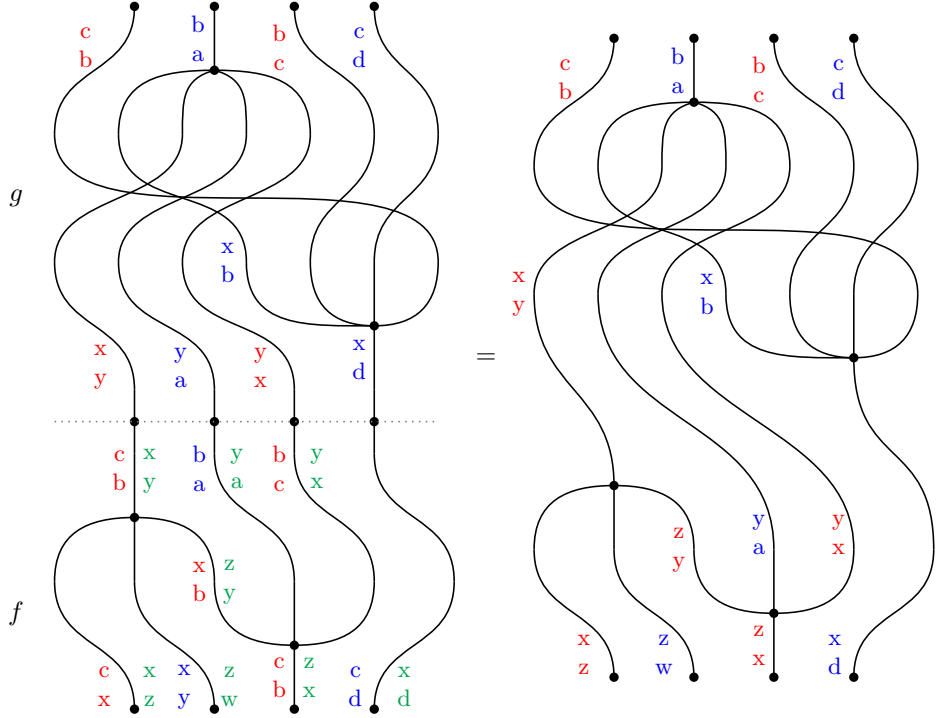
\begin{figure}\centering
\begin{tikzpicture}[font=\small,scale=.85]
\begin{scope} 
    \draw (-1.85,-3) node{\normalsize$g$};
    \draw (0,0) node[black,circle,fill,inner sep=1.25]{} node[red,xshift=-.65cm,yshift=-.55cm,align=center]{c\\b} to[out=270,in=90,looseness=1.2] (-1.25,-2);
    \draw (1.25,0) node[black,circle,fill,inner sep=1.25]{} -- node[blue,left,align=center]{b\\a} (1.25,-1) node[black,circle,fill,inner sep=1.25]{};
    \draw (2.5,0) node[black,circle,fill,inner sep=1.25]{} node[red,xshift=-.2cm,yshift=-.55cm,align=center]{b\\c} to[out=270,in=90,looseness=1.2] (3.75,-2);
    \draw (3.75,0) node[black,circle,fill,inner sep=1.25]{} node[blue,xshift=-.2cm,yshift=-.55cm,align=center]{c\\d} to[out=270,in=90,looseness=1.2] (4.75,-2);
    \begin{scope}[xshift=1.25cm]
    \draw (0,-1) to[out=180,in=90,looseness=1.2] (-1.5,-2);
    \draw (0,-1) to[out=195,in=90,looseness=1.2] (-.5,-2);
    \draw (0,-1) to[out=345,in=90,looseness=1.2] (.5,-2);
    \draw (0,-1) to[out=0,in=90,looseness=1.2] (1.5,-2);
    \end{scope}
    \draw (-1.25,-2) to[out=270,in=90,looseness=.8] (4.75,-4);
    \draw (-.25,-2) to[out=270,in=90,looseness=1.2] (1.75,-4);
    \draw (.75,-2) to[out=270,in=90,looseness=1] (-1.25,-4);
    \draw (1.75,-2) to[out=270,in=90,looseness=1] (-.25,-4);
    \draw (2.75,-2) to[out=270,in=90,looseness=1] (.75,-4);
    \draw (3.75,-2) to[out=270,in=90,looseness=1.2] (2.75,-4);
    \draw (4.75,-2) to[out=270,in=90,looseness=1.2] (3.75,-4);
    \begin{scope}[yshift=-4cm]
    \draw (-1.25,0) to[out=270,in=90,looseness=1.2] (0,-2) node[red,xshift=-.45cm,yshift=.3cm,align=center]{x\\y};
    \draw (-.25,0) to[out=270,in=90,looseness=1.2] (1.25,-2) node[blue,xshift=-.45cm,yshift=.3cm,align=center]{y\\a};
    \draw (.75,0) to[out=270,in=90,looseness=1.2] (2.5,-2) node[red,xshift=-.45cm,yshift=.3cm,align=center]{y\\x};
    \draw (3.75,-1) node[black,circle,fill,inner sep=1.25]{} node[blue,xshift=-.2cm,yshift=-.45cm,align=center]{x\\d} -- (3.75,-2);
    \begin{scope}[xshift=3.75cm]
    \draw (0,-1) to[out=180,in=270,looseness=1.2] (-2,0) node[blue,xshift=-.25cm,yshift=0cm,align=center]{x\\b};
    \draw (0,-1) to[out=180,in=270,looseness=1.2] (-1,0);
    \draw (0,-1) to[out=90,in=270,looseness=1.2] (0,0);
    \draw (0,-1) to[out=0,in=270,looseness=1.2] (1,0);
    \end{scope}
    \end{scope}
\end{scope}
    \draw (0,-6) -- node[black,circle,fill,inner sep=1.25]{} (0,-7);
    \draw (1.25,-6) -- node[black,circle,fill,inner sep=1.25]{} (1.25,-7);
    \draw (2.5,-6) -- node[black,circle,fill,inner sep=1.25]{} (2.5,-7);
    \draw (3.75,-6) -- node[black,circle,fill,inner sep=1.25]{} (3.75,-7);
    \draw[dotted,gray] (-1.25,-6.5) -- (4.75,-6.5);
\begin{scope}[yshift=-7cm] 
    \draw (-1.85,-2.5) node{\normalsize$f$};
    \draw (0,0) -- node[red,xshift=-.2cm,yshift=.2cm,align=center]{c\\b} node[Green,xshift=.2cm,yshift=.2cm,align=center]{x\\y} (0,-1) node[black,circle,fill,inner sep=1.25]{};
    \draw (1.25,0) node[blue,xshift=-.2cm,yshift=-.2cm,align=center]{b\\a} node[Green,xshift=.3cm,yshift=-.2cm,align=center]{y\\a} to[out=270,in=90,looseness=1.2] (2.5,-2);
    \draw (2.5,0) node[red,xshift=-.2cm,yshift=-.2cm,align=center]{b\\c} node[Green,xshift=.3cm,yshift=-.2cm,align=center]{y\\x} to[out=270,in=90,looseness=1.2] (3.75,-2);
    \draw (3.75,0) to[out=270,in=90,looseness=1.2] (5,-2);
    \draw (0,-1) to[out=180,in=90,looseness=1.2] (-1.25,-2);
    \draw (0,-1) -- (0,-2);
    \draw (0,-1) to[out=0,in=90,looseness=1.2] (1.25,-2) node[red,xshift=-.2cm,yshift=0cm,align=center]{x\\b} node[Green,xshift=.2cm,yshift=0cm,align=center]{z\\y};
    \begin{scope}[yshift=-2cm]
    \draw (0,-2) node[red,xshift=-.4cm,yshift=.3cm,align=center]{c\\x} node[Green,xshift=.2cm,yshift=.3cm,align=center]{x\\z} node[black,circle,fill,inner sep=1.25]{} to[out=90,in=270,looseness=1.2] (-1.25,0);
    \draw (1.25,-2) node[blue,xshift=-.4cm,yshift=.3cm,align=center]{x\\y} node[Green,xshift=.2cm,yshift=.3cm,align=center]{z\\w} node[black,circle,fill,inner sep=1.25]{} to[out=90,in=270,looseness=1.2] (0,0);
    \draw (2.5,-2) node[black,circle,fill,inner sep=1.25]{} -- node[red,xshift=-.2cm,align=center]{c\\b} node[Green,xshift=.2cm,align=center]{z\\x} (2.5,-1) node[black,circle,fill,inner sep=1.25]{};
    \draw (3.75,-2) node[black,circle,fill,inner sep=1.25]{} node[blue,xshift=-.25cm,yshift=.3cm,align=center]{c\\d} node[Green,xshift=.4cm,yshift=.3cm,align=center]{x\\d} to[out=90,in=270,looseness=1.2] (5,0);
    \draw (2.5,-1) to[out=180,in=270,looseness=1.2] (1.25,0);
    \draw (2.5,-1) -- (2.5,0);
    \draw (2.5,-1) to[out=0,in=270,looseness=1.2] (3.75,0);
    \end{scope}
\end{scope}
    \draw (5.5,-5.5) node{\normalsize$=$};
\begin{scope}[yshift=-.5cm,xshift=7.5cm] 
    \draw (0,0) node[black,circle,fill,inner sep=1.25]{} node[red,xshift=-.65cm,yshift=-.55cm,align=center]{c\\b} to[out=270,in=90,looseness=1.2] (-1.25,-2);
    \draw (1.25,0) node[black,circle,fill,inner sep=1.25]{} -- node[blue,left,align=center]{b\\a} (1.25,-1) node[black,circle,fill,inner sep=1.25]{};
    \draw (2.5,0) node[black,circle,fill,inner sep=1.25]{} node[red,xshift=-.2cm,yshift=-.55cm,align=center]{b\\c} to[out=270,in=90,looseness=1.2] (3.75,-2);
    \draw (3.75,0) node[black,circle,fill,inner sep=1.25]{} node[blue,xshift=-.2cm,yshift=-.55cm,align=center]{c\\d} to[out=270,in=90,looseness=1.2] (4.75,-2);
    \begin{scope}[xshift=1.25cm]
    \draw (0,-1) to[out=180,in=90,looseness=1.2] (-1.5,-2);
    \draw (0,-1) to[out=195,in=90,looseness=1.2] (-.5,-2);
    \draw (0,-1) to[out=345,in=90,looseness=1.2] (.5,-2);
    \draw (0,-1) to[out=0,in=90,looseness=1.2] (1.5,-2);
    \end{scope}
    \draw (-1.25,-2) to[out=270,in=90,looseness=.8] (4.75,-4);
    \draw (-.25,-2) to[out=270,in=90,looseness=1.2] (1.75,-4);
    \draw (.75,-2) to[out=270,in=90,looseness=1] (-1.25,-4) node[red,xshift=-.2cm,yshift=0cm,align=center]{x\\y};
    \draw (1.75,-2) to[out=270,in=90,looseness=1] (-.25,-4);
    \draw (2.75,-2) to[out=270,in=90,looseness=1] (.75,-4);
    \draw (3.75,-2) to[out=270,in=90,looseness=1.2] (2.75,-4);
    \draw (4.75,-2) to[out=270,in=90,looseness=1.2] (3.75,-4);
    \begin{scope}[yshift=-4cm]
    \draw (-1.25,0) to[out=270,in=90,looseness=1.2] (0,-3) node[black,circle,fill,inner sep=1.25]{};
    \draw (-.25,0) to[out=270,in=90,looseness=1.1] (2.5,-4) node[blue,xshift=-.2cm,yshift=.2cm,align=center]{y\\a};
    \draw (.75,0) to[out=270,in=90,looseness=1] (3.75,-4) node[red,xshift=-.2cm,yshift=.2cm,align=center]{y\\x};
    \draw (3.75,-1) node[black,circle,fill,inner sep=1.25]{} to[out=270,in=90,looseness=1.4] (5,-4);
    \begin{scope}[xshift=3.75cm]
    \draw (0,-1) to[out=180,in=270,looseness=1.2] (-2,0) node[blue,xshift=-.25cm,yshift=0cm,align=center]{x\\b};
    \draw (0,-1) to[out=180,in=270,looseness=1.2] (-1,0);
    \draw (0,-1) to[out=90,in=270,looseness=1.2] (0,0);
    \draw (0,-1) to[out=0,in=270,looseness=1.2] (1,0);
    \end{scope}
    \end{scope}
    \begin{scope}[yshift=-6cm]
    \draw (0,-1) to[out=180,in=90,looseness=1.2] (-1.25,-2);
    \draw (0,-1) -- (0,-2);
    \draw (0,-1) to[out=0,in=90,looseness=1.2] (1.25,-2) node[red,xshift=-.2cm,yshift=0cm,align=center]{z\\y};
    \begin{scope}[yshift=-2cm]
    \draw (0,-2) node[red,xshift=-.4cm,yshift=.3cm,align=center]{x\\z} node[black,circle,fill,inner sep=1.25]{} to[out=90,in=270,looseness=1.2] (-1.25,0);
    \draw (1.25,-2) node[blue,xshift=-.4cm,yshift=.3cm,align=center]{z\\w} node[black,circle,fill,inner sep=1.25]{} to[out=90,in=270,looseness=1.2] (0,0);
    \draw (2.5,-2) node[black,circle,fill,inner sep=1.25]{} -- node[red,xshift=-.2cm,align=center]{z\\x} (2.5,-1) node[black,circle,fill,inner sep=1.25]{};
    \draw (3.75,-2) node[black,circle,fill,inner sep=1.25]{} node[blue,xshift=-.25cm,yshift=.3cm,align=center]{x\\d} to[out=90,in=270,looseness=1.2] (5,0);
    \draw (2.5,-1) to[out=180,in=270,looseness=1.2] (1.25,0);
    \draw (2.5,-1) -- (2.5,0);
    \draw (2.5,-1) to[out=0,in=270,looseness=1.2] (3.75,0);
    \end{scope}
    \end{scope}
\end{scope}
\end{tikzpicture}
\caption{A composition $f \circ g$ of strand diagrams.}
\label{fig_strand_composition}
\end{figure}

With this definition of composition between $R$-branching strand diagrams, it is easy to check that the inverse of a diagram is the diagram obtained by reversing the direction of each strand, which makes sinks into sources and sources into sinks.
Essentially, the inverse can be computed just by drawing the original strand diagram ``upside-down''.
It is also clear that this composition is associative, thus the replacement groupoid corresponding to $R$ is really a groupoid.

\subsection{Generators of the Replacement Groupoid}
\label{SUB groupoid generators}

It is worth mentioning that, as a straightforward application of \cref{LEM cut SDs}, we can describe a generating set for the replacement groupoid.
Given a set of replacement graphs $R$, we call \textbf{split diagram} (\textbf{merge diagram}) the $R$-branching strand diagram consisting of a sole split (merge) surrounded by any amount of straight strands.
We call \textbf{permutation diagram} a strand diagram without splits or merges (the reason for this name is that the action of such a diagram is  that of a bijection between the set of sources and the set of sinks).

\begin{proposition}
The replacement groupoid associated to $R$ is generated by the infinite set consisting of every permutation diagram and every split diagram (or equivalently every merge diagram).
Moreover, each reduced $R$-branching strand diagram can be written uniquely as a product $M \circ P \circ S$, where $M$ is a product of merge diagrams, $P$ is a permutation diagram and $S$ is a product of split diagrams.
\end{proposition}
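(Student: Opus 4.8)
The plan is to read both assertions off of \cref{LEM cut SDs}, which already isolates the ``splitting part'' and the ``merging part'' of a reduced diagram, together with \cref{LEM reduced SD}. First I would establish the factorization for a fixed reduced $R$-branching strand diagram $f$. By \cref{LEM cut SDs}, $f$ is cut in a unique way into a ``top'' forest expansion $F_D$ consisting only of splits and a ``bottom'' forest expansion $F_R$ consisting only of merges (drawn upside down), joined along their leaves by a unique label-preserving bijection $\sigma$ that is an isomorphism of the two leaf graphs. A forest expansion is by definition a finite chain of simple forest expansions, and a simple forest expansion of one branch is exactly one split inserted among straight strands, i.e.\ a split diagram; composing these split diagrams in an order compatible with the chain of expansions rebuilds $F_D$, so $F_D$, viewed as a strand diagram with only splits, is a product $S$ of split diagrams. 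Turning $F_R$ upside down turns each of its replacement trees into a merge, so in the same way $F_R$ is a product $M$ of merge diagrams. Finally $\sigma$ is realized by a permutation diagram $P$, labeled to agree with the labels of $f$, and concatenating $S$, then $P$, then $M$ reconstitutes $f$; hence $f = M \circ P \circ S$. Since every $R$-branching strand diagram is equivalent to a unique reduced one by \cref{LEM reduced SD}, this also proves that the permutation diagrams and the split diagrams generate the replacement groupoid; the parenthetical remark in the statement is just the fact that merge diagrams are the inverses of split diagrams, so one may trade one family for the other in a groupoid generating set.

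For uniqueness I would show that any factorization $f = M \circ P \circ S$ of the indicated form must come from the cut of \cref{LEM cut SDs}. In such a factorization, every directed path from a source to a sink of $f$ passes first through a (possibly empty) run of strands belonging to splits of $S$, then through a single strand of the permutation diagram $P$ (a permutation diagram has no splits or merges, so each of its strands joins one source to one sink), and finally through a run of strands belonging to merges of $M$; in particular the unique strand on that path whose predecessors all lie in splits and whose successors all lie in merges is the strand inside $P$. By the proof of \cref{LEM cut SDs} this is precisely the family of strands along which $f$ is severed, so $P$ is forced to be the permutation part of $f$ and $S$, $M$ to be the two forest expansions $F_D$, $F_R$, which are themselves unique by that Lemma.

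The delicate point, and the main obstacle, is that this path argument only applies when the concatenation $M \circ P \circ S$ is already reduced, i.e.\ coincides with $f$ on the nose; otherwise one could inflate $S$ by an extra split and $M$ by a matching merge that cancel against each other, producing a spurious ``factorization''. What makes the statement correct is exactly that $f$ is reduced: one must check that in a reduced $R$-branching strand diagram no reduction is ever possible between its split part and its merge part, so that the factorization built above is rigid and not merely rigid up to reduction. Once this is in hand the decomposition $M \circ P \circ S$ is literally the concatenation, and its uniqueness is immediate from the uniqueness of the cut in \cref{LEM cut SDs}. The remaining verifications — that composing split diagrams in any admissible order yields the same forest, that the resulting $R$-branching strand diagram is well defined, and that flipping a diagram upside down exchanges split and merge diagrams — are routine translations between forest expansions and strand diagrams.
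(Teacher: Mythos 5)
Your proposal is correct and takes essentially the same route as the paper, which states this proposition as a straightforward application of \cref{LEM cut SDs} (crediting the observation to Belk and Forrest): your decomposition of a reduced diagram into its split part, the label-preserving leaf bijection, and its merge part, with \cref{LEM reduced SD} supplying the generation claim, is exactly that application spelled out. The uniqueness subtlety you flag is handled by reading the product $M \circ P \circ S$ as the literal, already-reduced concatenation produced by the cut, which is the sense intended here.
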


This was first noted in \cite{belk2016rearrangement}:
split, merge and permutation diagrams correspond, respectively, to the simple expansion morphisms, simple contraction morphisms and base isomorphisms of that paper.
However, seeing this in terms of strand diagrams gives an explicit and visual description of these generators which can be handy for computations.

\section{Closed Strand Diagrams}
\label{SEC closed strand diagrams}

As was done in \cite{Belk2007ConjugacyAD}, we now proceed to ``close'' our strand diagrams (\cref{SUB closed strand diagrams}) and we describe new transformations that can be performed on these closed diagrams (\cref{SUB CSD transformations}).
Then we study the possible uniqueness of reduced closed strand diagrams (\cref{SUB confluent reductions}), which will be crucial when stating a general result about the conjugacy problem in the following \cref{SEC conjugacy problem}.
Finally, in \cref{SUB stable and vanishing} we describe what happens when certain transformations move the ``base line'' of a closed diagram back to where it started and possibly produce new configurations of labels.
This lays the groundwork for the algorithm to solve the conjugacy problem in \cref{SUB algorithm}.

\subsection{Definition of Closed Diagrams}
\label{SUB closed strand diagrams}

Fix a replacement system $\mathcal{X} = (X_0, R, \mathrm{C})$ and its rearrangement group $G_\mathcal{X}$.
As we have seen in the previous Section at \cpageref{TXT X-SDs}, we can represent rearrangements as $\mathcal{X}$-strand diagrams, which are $R$-branching strand diagrams (\cref{DEF R-branching}) whose sources and sinks both represent the base graph $X_0$ of $\mathcal{X}$ (along with its given ordering of edges).
Precisely because of this last condition, we can essentially ``close'' an $\mathcal{X}$-strand diagram around a unique ``hole'' by attaching each source to a sink in their given order as done in \cref{fig_CSD}.
Labels above and below need not be the same, but they will be the same up to some renaming of symbols.
A formal definition would essentially be the same of that of a strand diagram (\cref{DEF SDs}), except that we do not require acyclicity, we do not allow univalent vertices (sources and sinks) and instead we allow the existence of special vertices of in-degree and out-degree equal to 1 originating from the gluing.

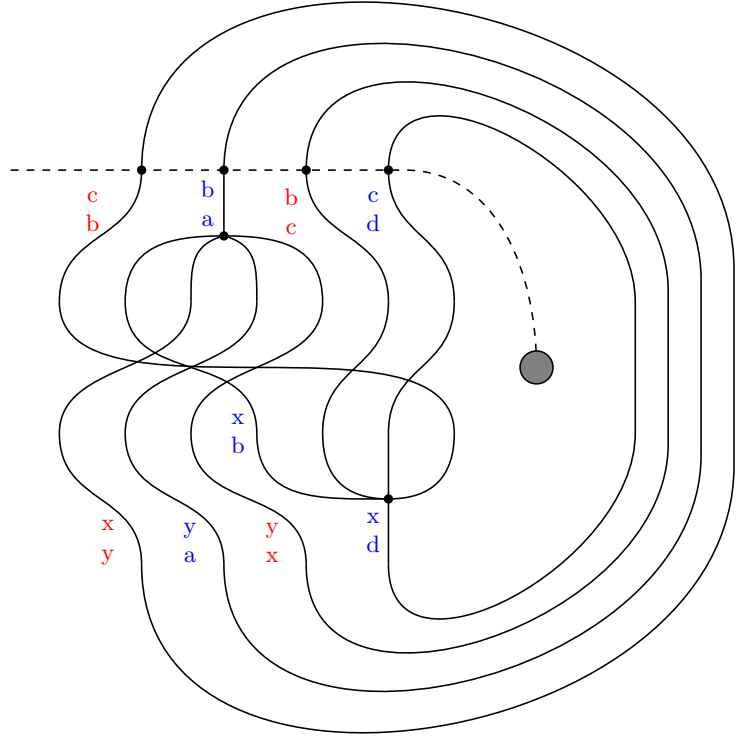
\begin{figure}\centering
\begin{tikzpicture}[font=\small,scale=.875]
    \useasboundingbox (-8,-5.5) rectangle (3,5.5);
    %
    \begin{scope}[xshift=-6cm,yshift=3cm]
    \draw (0,0) node[black,circle,fill,inner sep=1.25]{} node[red,xshift=-.65cm,yshift=-.55cm,align=center]{c\\b} to[out=270,in=90,looseness=1.2] (-1.25,-2);
    \draw (1.25,0) node[black,circle,fill,inner sep=1.25]{} -- node[blue,left,align=center]{b\\a} (1.25,-1) node[black,circle,fill,inner sep=1.25]{};
    \draw (2.5,0) node[black,circle,fill,inner sep=1.25]{} node[red,xshift=-.2cm,yshift=-.55cm,align=center]{b\\c} to[out=270,in=90,looseness=1.2] (3.75,-2);
    \draw (3.75,0) node[black,circle,fill,inner sep=1.25]{} node[blue,xshift=-.2cm,yshift=-.55cm,align=center]{c\\d} to[out=270,in=90,looseness=1.2] (4.75,-2);
    \begin{scope}[xshift=1.25cm]
    \draw (0,-1) to[out=180,in=90,looseness=1.2] (-1.5,-2);
    \draw (0,-1) to[out=195,in=90,looseness=1.2] (-.5,-2);
    \draw (0,-1) to[out=345,in=90,looseness=1.2] (.5,-2);
    \draw (0,-1) to[out=0,in=90,looseness=1.2] (1.5,-2);
    \end{scope}
    \draw (-1.25,-2) to[out=270,in=90,looseness=.8] (4.75,-4);
    \draw (-.25,-2) to[out=270,in=90,looseness=1.2] (1.75,-4);
    \draw (.75,-2) to[out=270,in=90,looseness=1] (-1.25,-4);
    \draw (1.75,-2) to[out=270,in=90,looseness=1] (-.25,-4);
    \draw (2.75,-2) to[out=270,in=90,looseness=1] (.75,-4);
    \draw (3.75,-2) to[out=270,in=90,looseness=1.2] (2.75,-4);
    \draw (4.75,-2) to[out=270,in=90,looseness=1.2] (3.75,-4);
    \begin{scope}[yshift=-4cm]
    \draw (-1.25,0) to[out=270,in=90,looseness=1.2] (0,-2) node[red,xshift=-.45cm,yshift=.3cm,align=center]{x\\y};
    \draw (-.25,0) to[out=270,in=90,looseness=1.2] (1.25,-2) node[blue,xshift=-.45cm,yshift=.3cm,align=center]{y\\a};
    \draw (.75,0) to[out=270,in=90,looseness=1.2] (2.5,-2) node[red,xshift=-.45cm,yshift=.3cm,align=center]{y\\x};
    \draw (3.75,-1) node[black,circle,fill,inner sep=1.25]{} node[blue,xshift=-.2cm,yshift=-.45cm,align=center]{x\\d} -- (3.75,-2);
    \begin{scope}[xshift=3.75cm]
    \draw (0,-1) to[out=180,in=270,looseness=1.2] (-2,0) node[blue,xshift=-.25cm,yshift=0cm,align=center]{x\\b};
    \draw (0,-1) to[out=180,in=270,looseness=1.2] (-1,0);
    \draw (0,-1) to[out=90,in=270,looseness=1.2] (0,0);
    \draw (0,-1) to[out=0,in=270,looseness=1.2] (1,0);
    \end{scope}
    \end{scope}
    \end{scope}
    \draw[dashed] (0,0) to[out=90,in=0] (-2,3) -- (-8,3);
    \draw[fill=gray] (0,0) circle (.25);
    \draw (-2.25,-3) to[out=270,in=270,looseness=1.2] (1.5,-1) -- (1.5,1) to[out=90,in=90,looseness=1.2] (-2.25,3);
    \draw (-3.5,-3) to[out=270,in=270,looseness=1.2] (2,-1.167) -- (2,1.167) to[out=90,in=90,looseness=1.2] (-3.5,3);
    \draw (-4.75,-3) to[out=270,in=270,looseness=1.2] (2.5,-1.333) -- (2.5,1.333) to[out=90,in=90,looseness=1.2] (-4.75,3);
    \draw (-6,-3) to[out=270,in=270,looseness=1.2] (3,-1.5) -- (3,1.5) to[out=90,in=90,looseness=1.2] (-6,3);
\end{tikzpicture}
\caption{The closed strand diagram obtained from the strand diagram depicted in \cref{fig_strand}. The base line is represented by the dashed line.}
\label{fig_CSD}
\end{figure}

More generally, we can close in this manner any $R$-branching strand diagram whose sources and sinks both represent isomorphic graphs, the isomorphism being given by the ordering of sources and sinks.
Diagrams obtained in this manner are called \textbf{$\bm{R}$-branching closed strand diagrams} (or simply \textit{closed strand diagrams} or \textit{closed diagrams}, if there is no ambiguity in omitting $R$).
Those vertices that were glued (originally sources and sinks) are called \textbf{base points} and the ordered tuple of gluing points is called the \textbf{base line}.
Since sinks and sources represent the same graph, it is natural to refer to that graph as the \textbf{base graph} of the closed diagram.
If $f$ is an $R$-branching strand diagram, we denote by $\llbracket f \rrbracket$ its closure.

A closed strand diagram contains the same data as the original $R$-branching strand diagram.
However, as we will see in the next \cref{SEC conjugacy problem}, conjugacy has a natural way of being represented by reductions of parallel strands, shifts of the base line and permutations of the base points.

\subsection{Transformations of Closed Diagrams}
\label{SUB CSD transformations}

Given an $R$-branching closed strand diagram, we can apply three different kinds of transformations: permutations, shifts and reductions.
Essentially, the first two allow to permute the base points and move the base line, respectively, and they will be called \textit{similarities}.
Reductions, on the other hand, include the same Type 1 and Type 2 reductions of strand diagrams defined in \cref{SUB SDs reduction} and also a new Type 3 reduction that corresponds to reducing the base graph of the rearrangement.

Each of these transformations encode some kind of (possibly trivial) modification of the base graph that corresponds some (possibly trivial) conjugation of the original rearrangement by some element of the replacement groupoid, as we will see soon.

\subsubsection{Similarities: Permutations and Shifts}

\phantomsection\label{TXT permutations}
The simplest transformation is the \textbf{permutation} of the base line, which consists simply of changing the order of the base points.
For example, \cref{fig_CSD_perm} depict the result of a permutation of the diagram portrayed in \cref{fig_CSD} (the explicit permutation is $(1 3 2) \in S_4$, where each number from $1$ to $4$ denotes a base point in their given ordering).

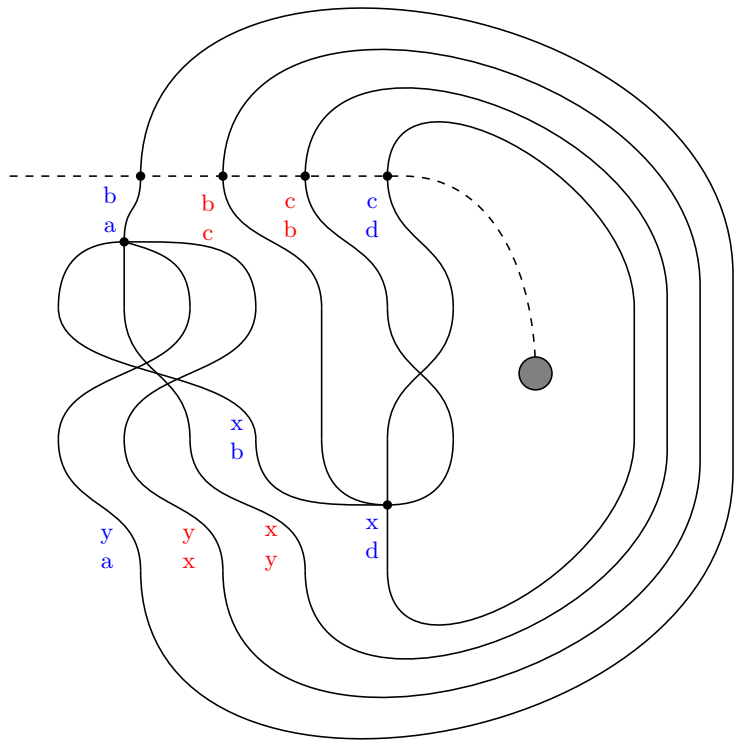
\begin{figure}\centering
\begin{tikzpicture}[font=\small,scale=.875]
    \useasboundingbox (-8,-5.5) rectangle (3,5.5);
    %
    \begin{scope}[xshift=-6cm,yshift=3cm]
    \draw (0,0) node[black,circle,fill,inner sep=1.25]{} to[out=270,in=90,looseness=1.5] node[blue,xshift=-.3cm,align=center]{b\\a} (-.25,-1) node[black,circle,fill,inner sep=1.25]{};
    \draw (1.25,0) node[black,circle,fill,inner sep=1.25]{} node[red,xshift=-.2cm,yshift=-.55cm,align=center]{b\\c} to[out=270,in=90,looseness=1.2] (2.75,-2);
    \draw (2.5,0) node[black,circle,fill,inner sep=1.25]{} node[red,xshift=-.2cm,yshift=-.55cm,align=center]{c\\b} to[out=270,in=90,looseness=1.2] (3.75,-2);
    \draw (3.75,0) node[black,circle,fill,inner sep=1.25]{} node[blue,xshift=-.2cm,yshift=-.55cm,align=center]{c\\d} to[out=270,in=90,looseness=1.2] (4.75,-2);
    \draw (-.25,-1) to[out=180,in=90,looseness=1.2] (-1.25,-2);
    \draw (-.25,-1) -- (-.25,-2);
    \draw (-.25,-1) to[out=345,in=90,looseness=1.2] (.75,-2);
    \draw (-.25,-1) to[out=0,in=90,looseness=1.2] (1.75,-2);
    \draw (-1.25,-2) to[out=270,in=90,looseness=.8] (1.75,-4);
    \draw (-.25,-2) to[out=270,in=90,looseness=1.2] (.75,-4);
    \draw (.75,-2) to[out=270,in=90,looseness=1] (-1.25,-4);
    \draw (1.75,-2) to[out=270,in=90,looseness=1] (-.25,-4);
    \draw (2.75,-2) to[out=270,in=90,looseness=1] (2.75,-4);
    \draw (3.75,-2) to[out=270,in=90,looseness=1.2] (4.75,-4);
    \draw (4.75,-2) to[out=270,in=90,looseness=1.2] (3.75,-4);
    \begin{scope}[yshift=-4cm]
    \draw (-1.25,0) to[out=270,in=90,looseness=1.2] (0,-2) node[blue,xshift=-.45cm,yshift=.3cm,align=center]{y\\a};
    \draw (-.25,0) to[out=270,in=90,looseness=1.2] (1.25,-2) node[red,xshift=-.45cm,yshift=.3cm,align=center]{y\\x};
    \draw (.75,0) to[out=270,in=90,looseness=1.2] (2.5,-2) node[red,xshift=-.45cm,yshift=.3cm,align=center]{x\\y};
    \draw (3.75,-1) node[black,circle,fill,inner sep=1.25]{} node[blue,xshift=-.2cm,yshift=-.45cm,align=center]{x\\d} -- (3.75,-2);
    \begin{scope}[xshift=3.75cm]
    \draw (0,-1) to[out=180,in=270,looseness=1.2] (-2,0) node[blue,xshift=-.25cm,yshift=0cm,align=center]{x\\b};
    \draw (0,-1) to[out=180,in=270,looseness=1.2] (-1,0);
    \draw (0,-1) to[out=90,in=270,looseness=1.2] (0,0);
    \draw (0,-1) to[out=0,in=270,looseness=1.2] (1,0);
    \end{scope}
    \end{scope}
    \end{scope}
    \draw[dashed] (0,0) to[out=90,in=0] (-2,3) -- (-8,3);
    \draw[fill=gray] (0,0) circle (.25);
    \draw (-2.25,-3) to[out=270,in=270,looseness=1.2] (1.5,-1) -- (1.5,1) to[out=90,in=90,looseness=1.2] (-2.25,3);
    \draw (-3.5,-3) to[out=270,in=270,looseness=1.2] (2,-1.167) -- (2,1.167) to[out=90,in=90,looseness=1.2] (-3.5,3);
    \draw (-4.75,-3) to[out=270,in=270,looseness=1.2] (2.5,-1.333) -- (2.5,1.333) to[out=90,in=90,looseness=1.2] (-4.75,3);
    \draw (-6,-3) to[out=270,in=270,looseness=1.2] (3,-1.5) -- (3,1.5) to[out=90,in=90,looseness=1.2] (-6,3);
\end{tikzpicture}
\caption{The diagram obtained by performing a permutation on the closed strand diagram represented in \cref{fig_CSD}.}
\label{fig_CSD_perm}
\end{figure}

Clearly permutations transform $R$-branching closed diagrams into $R$-branching closed diagrams.
Transformations of this kind do not change the base graph of the diagram, but they change the order of its edges.

As we will see in \cref{PROP conjugator}, we need this kind of transformations in order to represent conjugations by permutation diagrams (which were defined in \cref{SUB groupoid generators}).

\medskip

\phantomsection\label{TXT shifts}
A \textbf{shift} of the base line is the transformation of an $R$-branching closed strand diagram shown in \cref{fig_CSD_shift}, which essentially consists of moving the base line in such a way that it crosses exactly one split or one merge.
An example is depicted in \cref{fig_CSD_shift_ex}.
We say that a shift is \textbf{expanding} and that it \textit{expands} the split (merge) that it crosses if the number of base points is increased by the shift.
If instead the number of base points is decreased, we say that the shift is \textbf{reducing} and that it \textit{reduces} the split (merge) that it crosses.
Observe that, when a shift expands a split (merge), new symbols may need to be generated below the split (above the merge) in order to make sure that the resulting diagram is $R$-branching (\cref{DEF R-branching}), as is the case with the letters v and w in \cref{fig_CSD_shift_ex}.

\begin{figure}\centering
\begin{subfigure}{.475\textwidth}\centering
    \begin{tikzpicture}[font=\small,scale=.85]
        \draw[dashed] (-1.25,0) -- (1.25,0);
        \draw (0,1) -- node[Green,left,align=center]{a\\b} (0,0) node[black,circle,fill,inner sep=1.25]{} -- node[Green,left,align=center]{v\\w} (0,-1) node[black,circle,fill,inner sep=1.25]{};
        \draw (0,-1) to[out=180,in=90,looseness=1.2] (-.85,-2) node[Orange,xshift=-.3cm,yshift=0cm,align=center]{v\textsubscript{1}\\w\textsubscript{1}} -- (-.85,-3.5);
        \draw[dotted] (0,-1) -- (0,-1.5);
        \draw (0,-1) to[out=0,in=90,looseness=1.2] (.85,-2) node[Plum,xshift=-.3cm,yshift=0cm,align=center]{v\textsubscript{n}\\w\textsubscript{n}} -- (.85,-3.5);
        \draw (0,-2) node{\Large$\dots$};
        \draw[thick,-stealth] (1.45,-1.25) -- (2,-1.25);
        \draw[thick,-stealth] (2,-1.25) -- (1.45,-1.25);
        \begin{scope}[xshift=3.5cm]
        \draw[dashed] (-1.5,-2.5) -- (1.5,-2.5);
        \draw (0,1) -- (0,0) -- node[Green,left,align=center]{a\\b} (0,-1) node[black,circle,fill,inner sep=1.25]{};
        \draw (0,-1) to[out=180,in=90,looseness=1.2] (-.85,-2) node[Orange,xshift=-.3cm,yshift=0cm,align=center]{a\textsubscript{1}\\b\textsubscript{1}} -- (-.85,-2.5) node[black,circle,fill,inner sep=1.25]{} -- (-.85,-3.5) node[Orange,xshift=-.3cm,yshift=.4cm,align=center]{v\textsubscript{1}\\w\textsubscript{1}};
        \draw[dotted] (0,-1) -- (0,-1.5);
        \draw (0,-1) to[out=0,in=90,looseness=1.2] (.85,-2) node[Plum,xshift=-.3cm,yshift=0cm,align=center]{a\textsubscript{n}\\b\textsubscript{n}} -- (.85,-2.5) node[black,circle,fill,inner sep=1.25]{} -- (.85,-3.5) node[Plum,xshift=-.3cm,yshift=.4cm,align=center]{v\textsubscript{n}\\w\textsubscript{n}};
        \draw (0,-2) node{\Large$\dots$};
        \end{scope}
    \end{tikzpicture}
\end{subfigure}
\begin{subfigure}{.475\textwidth}\centering
    \begin{tikzpicture}[font=\small,scale=.85,yscale=-1]
        \draw[dashed] (-1.25,0) -- (1.25,0);
        \draw (0,1) -- node[Green,left,align=center]{v\\w} (0,0) node[black,circle,fill,inner sep=1.25]{} -- node[Green,left,align=center]{a\\b} (0,-1) node[black,circle,fill,inner sep=1.25]{};
        \draw (0,-1) to[out=180,in=90,looseness=1.2] (-.85,-2) node[Orange,xshift=-.3cm,yshift=0cm,align=center]{a\textsubscript{1}\\b\textsubscript{1}} -- (-.85,-3.5);
        \draw[dotted] (0,-1) -- (0,-1.5);
        \draw (0,-1) to[out=0,in=90,looseness=1.2] (.85,-2) node[Plum,xshift=-.3cm,yshift=0cm,align=center]{a\textsubscript{n}\\b\textsubscript{n}} -- (.85,-3.5);
        \draw (0,-2) node{\Large$\dots$};
        \draw[thick,-stealth] (1.45,-1.25) -- (2,-1.25);
        \draw[thick,-stealth] (2,-1.25) -- (1.45,-1.25);
        \begin{scope}[xshift=3.5cm]
        \draw[dashed] (-1.5,-2.5) -- (1.5,-2.5);
        \draw (0,1) -- (0,0) -- node[Green,left,align=center]{v\\w} (0,-1) node[black,circle,fill,inner sep=1.25]{};
        \draw (0,-1) to[out=180,in=90,looseness=1.2] (-.85,-2) node[Orange,xshift=-.3cm,yshift=0cm,align=center]{v\textsubscript{1}\\w\textsubscript{1}} -- (-.85,-2.5) node[black,circle,fill,inner sep=1.25]{} -- (-.85,-3.5) node[Orange,xshift=-.3cm,yshift=-.4cm,align=center]{a\textsubscript{1}\\b\textsubscript{1}};
        \draw[dotted] (0,-1) -- (0,-1.5);
        \draw (0,-1) to[out=0,in=90,looseness=1.2] (.85,-2) node[Plum,xshift=-.3cm,yshift=0cm,align=center]{v\textsubscript{n}\\w\textsubscript{n}} -- (.85,-2.5) node[black,circle,fill,inner sep=1.25]{} -- (.85,-3.5) node[Plum,xshift=-.3cm,yshift=-.4cm,align=center]{a\textsubscript{n}\\b\textsubscript{n}};
        \draw (0,-2) node{\Large$\dots$};
        \end{scope}
    \end{tikzpicture}
\end{subfigure}
\caption{From left to right: two expanding shifts. From right to left: two reducing shifts.}
\label{fig_CSD_shift}
\end{figure}
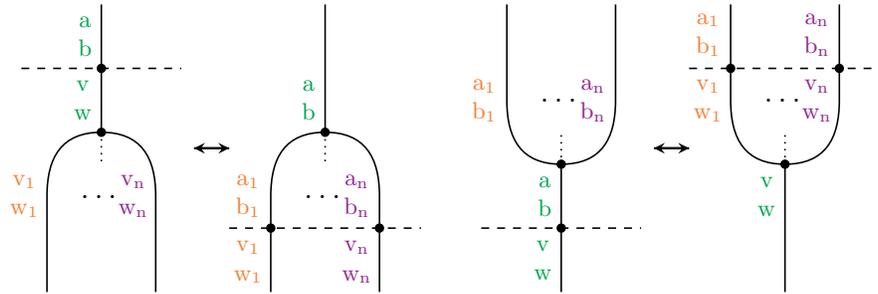

\begin{figure}\centering
\begin{tikzpicture}[font=\small,scale=.875]
    \useasboundingbox (-8,-4.5) rectangle (3,6.5);
    %
    \begin{scope}[xshift=-6cm,yshift=3cm]
    \draw (0,1) to[out=270,in=90,looseness=1.2] (-1.25,-1) node[black,circle,fill,inner sep=1.25]{};
    \draw (1.25,1) -- (1.25,0) node[black,circle,fill,inner sep=1.25]{};
    \draw (2.5,1) to[out=270,in=90,looseness=1.2] (3.75,-1) node[black,circle,fill,inner sep=1.25]{};
    \draw (3.75,1) to[out=270,in=90,looseness=1.2] (4.75,-1) node[black,circle,fill,inner sep=1.25]{};
    \begin{scope}[xshift=1.25cm]
    \draw (0,0) to[out=180,in=90,looseness=1.2] (-1.5,-1) node[black,circle,fill,inner sep=1.25]{} node[blue,xshift=-.2cm,yshift=.35cm,align=center]{v\\y};
    \draw (0,0) to[out=195,in=90,looseness=1.2] (-.5,-1) node[black,circle,fill,inner sep=1.25]{} node[red,xshift=-.2cm,yshift=.35cm,align=center]{v\\w};
    \draw (0,0) to[out=345,in=90,looseness=1.2] (.5,-1) node[black,circle,fill,inner sep=1.25]{} node[blue,xshift=-.25cm,yshift=.35cm,align=center]{w\\a};
    \draw (0,0) to[out=0,in=90,looseness=1.2] (1.5,-1) node[black,circle,fill,inner sep=1.25]{} node[red,xshift=-.35cm,yshift=.35cm,align=center]{w\\v};
    \end{scope}
    \draw (-1.25,-1) to[out=270,in=90,looseness=.8] (4.75,-3);
    \draw (-.25,-1) to[out=270,in=90,looseness=1.2] (1.75,-3);
    \draw (.75,-1) to[out=270,in=90,looseness=1] (-1.25,-3);
    \draw (1.75,-1) to[out=270,in=90,looseness=1] (-.25,-3);
    \draw (2.75,-1) to[out=270,in=90,looseness=1] (.75,-3);
    \draw (3.75,-1) to[out=270,in=90,looseness=1.2] (2.75,-3);
    \draw (4.75,-1) to[out=270,in=90,looseness=1.2] (3.75,-3);
    \begin{scope}[yshift=-3cm]
    \draw (-1.25,0) node[red,xshift=-.25cm,yshift=0cm,align=center]{x\\y} to[out=270,in=90,looseness=1.2] (0,-2);
    \draw (-.25,0) node[blue,xshift=-.25cm,yshift=0cm,align=center]{y\\a} to[out=270,in=90,looseness=1.2] (1.25,-2);
    \draw (.75,0) node[red,xshift=-.25cm,yshift=0cm,align=center]{y\\x} to[out=270,in=90,looseness=1.2] (2.5,-2);
    \draw (3.75,-1) node[black,circle,fill,inner sep=1.25]{} node[blue,xshift=-.2cm,yshift=-.45cm,align=center]{x\\d} -- (3.75,-2);
    \begin{scope}[xshift=3.75cm]
    \draw (0,-1) to[out=180,in=270,looseness=1.2] (-2,0) node[blue,xshift=-.25cm,yshift=0cm,align=center]{x\\b};
    \draw (0,-1) to[out=180,in=270,looseness=1.2] (-1,0) node[red,xshift=-.25cm,yshift=0cm,align=center]{b\\c};
    \draw (0,-1) to[out=90,in=270,looseness=1.2] (0,0) node[blue,xshift=-.25cm,yshift=-0cm,align=center]{c\\d};
    \draw (0,-1) to[out=0,in=270,looseness=1.2] (1,0) node[red,xshift=-.25cm,yshift=0cm,align=center]{c\\b};
    \end{scope}
    \end{scope}
    \end{scope}
    \draw[dashed] (0,0) to[out=90,in=0] (-1.2,2) -- (-8,2);
    \draw[fill=gray] (0,0) circle (.25);
    \draw (-2.25,-2) to[out=270,in=270,looseness=1.2] (1.5,0) -- (1.5,2) to[out=90,in=90,looseness=1.2] (-2.25,4);
    \draw (-3.5,-2) to[out=270,in=270,looseness=1.2] (2,-.167) -- (2,2.167) to[out=90,in=90,looseness=1.2] (-3.5,4);
    \draw (-4.75,-2) to[out=270,in=270,looseness=1.2] (2.5,-.333) -- (2.5,2.333) to[out=90,in=90,looseness=1.2] (-4.75,4);
    \draw (-6,-2) to[out=270,in=270,looseness=1.2] (3,-.5) -- (3,2.5) to[out=90,in=90,looseness=1.2] (-6,4);
\end{tikzpicture}
\caption{The diagram obtained by performing the (expanding) shift by the split $(b,a)$ on the closed strand diagram represented in \cref{fig_CSD}.}
\label{fig_CSD_shift_ex}
\end{figure}
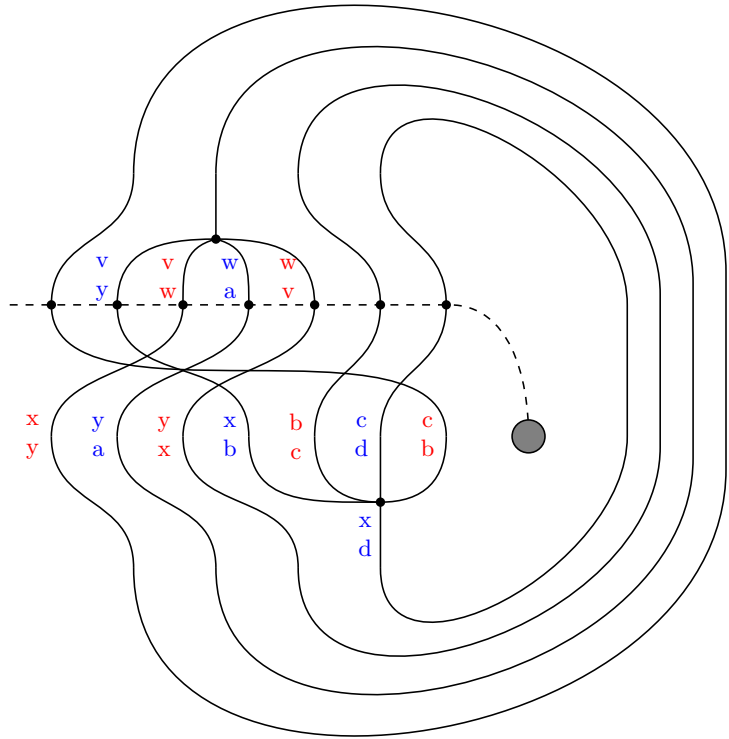

As we will see in \cref{PROP conjugator}, shifts of the base line represent conjugations by split and merge diagrams (which were defined in \cref{SUB groupoid generators}).

\medskip

\phantomsection\label{TXT similarity}
The two types of transformations defined so far are called \textbf{similarities}, and we say that two closed strand diagrams are \textbf{similar} if they only differ by the application of shifts and permutations.
If $\eta$ is a closed diagram, we denote by $\llbracket \eta \rrbracket$ its similarity class.
In practice, two closed diagrams are similar if they are the same up to ``forgetting'' about the base line and the base points.

\begin{remark}\label{RMK cohomology}
    The base line can be thought of as a cocycle of the graph cohomology of the graph obtained from the closed strand diagram by ``forgetting'' about the base line and the base points.
    In this sense, similarities preserve cohomology, and cohomology classes of this graph correspond to similarity classes of closed strand diagrams.
    This is going to be useful for the algorithm in \cref{SUB algorithm}.
\end{remark}

\subsubsection{Reductions of Closed Strand Diagrams}

A \textbf{Type 1 or 2 reduction} of an $R$-branching closed diagram is either of the two moves shown in \cref{fig_SD_reductions}.
These are the same reductions of strand diagrams, but it is important to keep in mind that these cannot ``cross'' the base line.
In practice, shifts (defined at \cpageref{TXT shifts}) are often needed to move the base line and ``unlock'' these types of reductions.

\phantomsection\label{TXT 3 reductions}
A \textbf{loping strand} is a path of strands without splits nor merges.
A \textbf{Type 3 reduction} is a move obtained by considering a sequence of looping strands that are consecutive in their given order at the intersection with the base line and whose labels correspond to the bottom labels of some replacement tree $T$ up to some renaming;
then replacing those looping strands with a single looping strand labeled as the top strand of $T$, up to the same renaming.
Each loop can wind multiple times around the central ``hole'', in which case there are multiple copies of base points that are labeled as the top strand of $T$;
the strands that are not involved in the reduction must intersect the base line on the left or on the right of those that are involved (this can be achieved by performing a permutation).
For example, \cref{fig_3reduction} depicts the general shape of a Type 3 reduction with winding number equal to 2.

Additionally, a Type 3 reduction is only allowed when the labeling symbols being ``deleted'' by the reduction do not appear among labels of strands not involved in the reduction.
The reason for this is that we would otherwise lose data about edge adjacency, since Type 3 reductions encode ``anti-expansions'' of some portion of the base graph and thus delete certain vertices.
Finally, it is important to keep in mind that the looping strands involved in a Type 3 reduction must be in the right order, which is why in practice permutations of the base points (defined at \cpageref{TXT permutations}) are often needed to ``unlock'' this kind of reductions.

\begin{figure}\centering
\begin{tikzpicture}[scale=.65,font=\scriptsize]
    \useasboundingbox (-8.25,-13.5) rectangle (7.25,5.5);
    %
    \draw[dashed] (0,0) -- (-8,0);
    \draw[fill=gray] (0,0) circle (.25);
    \draw (-7,1) -- (-7,0) node[black,circle,fill,inner sep=1.25]{} -- node[Orange,left,align=center]{v\textsubscript{1}\\w\textsubscript{1}} (-7,-1);
    \draw[dotted] (-6,1) -- (-6,-1);
    \draw (-5,1) -- (-5,0) node[black,circle,fill,inner sep=1.25]{} -- node[Plum,left,align=center]{v\textsubscript{n}\\w\textsubscript{n}} (-5,-1);
    \draw (-3.5,1) -- (-3.5,0) node[black,circle,fill,inner sep=1.25]{} -- node[Orange,left,align=center]{v\textsubscript{1}\textsuperscript{$\prime$}\\w\textsubscript{1}\textsuperscript{$\prime$}} (-3.5,-1);
    \draw[dotted] (-2.5,1) -- (-2.5,-1);
    \draw (-1.5,1) -- (-1.5,0) node[black,circle,fill,inner sep=1.25]{} -- node[Plum,left,align=center]{v\textsubscript{n}\textsuperscript{$\prime$}\\w\textsubscript{n}\textsuperscript{$\prime$}} (-1.5,-1);
    \draw (-7,-1) to[out=270,in=270] (7,-1.5);
    \draw[dotted] (-6,-1) to[out=270,in=270] (6,-1.5);
    \draw (-5,-1) to[out=270,in=270] (5,-1.5);
    \draw (-3.5,-1) to[out=270,in=270] (3.5,-1.5);
    \draw[dotted] (-2.5,-1) to[out=270,in=270] (2.5,-1.5);
    \draw (-1.5,-1) to[out=270,in=270] (1.5,-1.5);
    \draw (1.5,-1.5) to[out=90,in=270] (5,1.5);
    \draw[dotted] (2.5,-1.5) to[out=90,in=270] (6,1.5);
    \draw (3.5,-1.5) to[out=90,in=270] (7,1.5);
    \draw (5,-1.5) to[out=90,in=270] (1.5,1.5);
    \draw[dotted] (6,-1.5) to[out=90,in=270] (2.5,1.5);
    \draw (7,-1.5) to[out=90,in=270] (3.5,1.5);
    \draw (7,1.5) to[out=90,in=90] (-7,1);
    \draw[dotted] (6,1.5) to[out=90,in=90] (-6,1);
    \draw (5,1.5) to[out=90,in=90] (-5,1);
    \draw (3.5,1.5) to[out=90,in=90] (-3.5,1);
    \draw[dotted] (2.5,1.5) to[out=90,in=90] (-2.5,1);
    \draw (1.5,1.5) to[out=90,in=90] (-1.5,1);
    \draw[thick,-stealth] (0,-6) -- (0,-7);
    \begin{scope}[yshift=-10.5cm]
    \draw[dashed] (0,0) -- (-4,0);
    \draw[fill=gray] (0,0) circle (.25);
    \draw (-3,0) -- node[Green,left,align=center]{i\\t} (-3,-1) to[out=270,in=270] (3,-1) to[out=90,in=270] (1.5,1) to[out=90,in=90] (-1.5,1) -- (-1.5,0);
    \draw (-1.5,0) node[black,circle,fill,inner sep=1.25]{} -- node[Green,left,align=center]{i\textsuperscript{$\prime$}\\t\textsuperscript{$\prime$}} (-1.5,-1) to[out=270,in=270] (1.5,-1) to[out=90,in=270] (3,1) to[out=90,in=90] (-3,1) -- (-3,0) node[black,circle,fill,inner sep=1.25]{};
    \end{scope}
\end{tikzpicture}
\caption{A schematic depiction of a Type 3 reduction with winding number equal to 2.}
\label{fig_3reduction}
\end{figure}
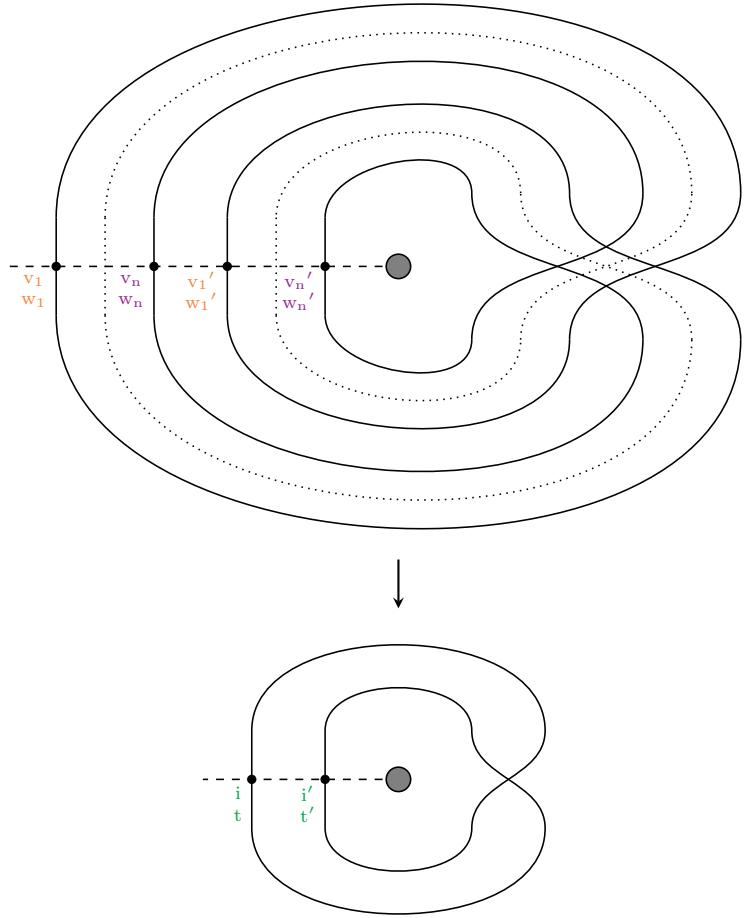

\subsection{Confluent Reduction Systems and Reduced Closed Diagrams}
\label{SUB confluent reductions}

We say that two closed strand diagrams are \textbf{equivalent} if we can obtain one from the other by applying a sequence of reductions.
The similarity class of a closed strand diagram (defined at \cpageref{TXT similarity}) is \textbf{reduced} if no reduction can be performed on any diagram that it contains.
We will just say that a closed strand diagram is reduced if it belongs to a reduced similarity class, as these only differ by moving the base line or the base points.

\phantomsection\label{TXT airplane non confluent}
Observe that, in general, reduced similarity classes of closed strand diagrams may not be unique.
For example, \cref{fig_TA_reduced} depicts two distinct reduced closed diagrams in the Airplane replacement system.
However, under the hypothesis of \textit{reduction-confluence} of the replacement system, we can immediately circumvent this issue, as is shown below.

\begin{figure}\centering
\begin{subfigure}[c]{\textwidth}
\begin{tikzpicture}[font=\small,scale=.85]
    \draw (0,0) circle (1) node[red,xshift=-0.975cm,yshift=.35cm,align=center]{y\\x};
    \draw (0,0) circle (1.5) node[blue,xshift=-1.45cm,yshift=.35cm,align=center]{y\\w};
    \draw (0,0) circle (2) node[red,xshift=-1.9cm,yshift=.35cm,align=center]{x\\y};
    \draw (0,0) circle (2.5) node[blue,xshift=-2.375cm,yshift=.35cm,align=center]{x\\v};
    \draw[dashed] (0,0) -- (-1,0) node[black,circle,fill,inner sep=1.25]{} -- (-1.5,0) node[black,circle,fill,inner sep=1.25]{} -- (-2,0) node[black,circle,fill,inner sep=1.25]{} -- (-2.5,0) node[black,circle,fill,inner sep=1.25]{} -- (-3,0);
    \draw[fill=gray] (0,0) circle (.25);
    \draw[thick,-stealth] (-3.5,0) to[out=180,in=90] node[xshift=-1.5cm,align=center]{Reduce by the 3\\inner strands} (-4.25,-.75);
    \begin{scope}[xshift=-4.5cm,yshift=-3.75cm]
    \draw (0,0) circle (1.25) node[red,xshift=-1.25cm,yshift=.35cm,align=center]{x\\x};
    \draw (0,0) circle (2.25) node[blue,xshift=-2.15cm,yshift=.35cm,align=center]{x\\v};
    \draw[dashed] (0,0) -- (-1.25,0) node[black,circle,fill,inner sep=1.25]{} -- (-2.25,0) node[black,circle,fill,inner sep=1.25]{} -- (-2.9,0);
    \draw[fill=gray] (0,0) circle (.25);
    \end{scope}
    \draw[thick,-stealth] (3.5,0) to[out=0,in=90] node[xshift=1.5cm,align=center]{Reduce by all of\\the 4 strands}  (4.25,-.75);
    \begin{scope}[xshift=4.5cm,yshift=-3.75cm]
    \draw (0,0) circle (1.8) node[blue,xshift=-1.8cm,yshift=.35cm,align=center]{v\\w};
    \draw[dashed] (0,0) -- (-1.8,0) node[black,circle,fill,inner sep=1.25]{} -- (-2.9,0);
    \draw[fill=gray] (0,0) circle (.25);
    \end{scope}
\end{tikzpicture}
\caption{Two reduced closed strand diagrams obtained from the identity of $T_A$.}
\end{subfigure}
\begin{subfigure}[c]{\textwidth}
\begin{tikzpicture}
    \draw[blue,dotted] (0,0) ellipse (2.3cm and .8cm);
    \draw[red,dotted] (.95,0) ellipse (1.8cm and .8cm);
    \draw[->-=.5,blue] (-0.5,0) node[yshift=.3cm,xshift=-.2cm,black]{x} -- (-2,0) node[yshift=.3cm,black]{v} node[black,circle,fill,inner sep=1.25]{}; \draw[->-=.5,blue] (0.5,0) node[yshift=.3cm,xshift=.2cm,black]{y} -- (2,0) node[yshift=.3cm,black]{w} node[black,circle,fill,inner sep=1.25]{};
    \draw[->-=.5,red] (0.5,0) node[circle,fill,inner sep=1.25]{} to[out=90,in=90,looseness=1.7] (-0.5,0);
    \draw[->-=.5,red] (-0.5,0) node[black,circle,fill,inner sep=1.25]{} to[out=270,in=270,looseness=1.7] (0.5,0) node[black,circle,fill,inner sep=1.25]{};
    \draw[-stealth,blue] (-3,0) to[out=180,in=90] (-4,-1);
    \begin{scope}[xshift=-3cm,yshift=-2cm]
    \draw[->-=.5,blue] (-2,0) node[yshift=.3cm,black]{v} node[black,circle,fill,inner sep=1.25]{} -- (0,0) node[yshift=.3cm,xshift=-.2cm,black]{w} node[black,circle,fill,inner sep=1.25]{};
    \end{scope}
    \draw[thick,-stealth,red] (3,0) to[out=0,in=90] (4,-1);
    \begin{scope}[xshift=4.75cm,yshift=-2cm]
    \draw[->-=.5,blue] (-0.5,0) node[yshift=.3cm,xshift=-.2cm,black]{x} -- (-2,0) node[yshift=.3cm,black]{v} node[black,circle,fill,inner sep=1.25]{};
    \draw[->-=0,red] (0.5,0) to[out=90,in=90,looseness=1.7] (-0.5,0);
    \draw[red] (-0.5,0) node[black,circle,fill,inner sep=1.25]{} to[out=270,in=270,looseness=1.7] (0.5,0);
    \end{scope}
    \draw[white] (0,1.25);
    \draw[white] (-6,0);
    \draw[white] (6,0);
\end{tikzpicture}
\caption{The graph reductions corresponding to the strand diagram reductions shown above.}
\end{subfigure}
\caption{The reason why the Airplane replacement system (portrayed in \cref{fig_replacement_A}) is not reduction-confluent.}
\label{fig_TA_reduced}
\end{figure}
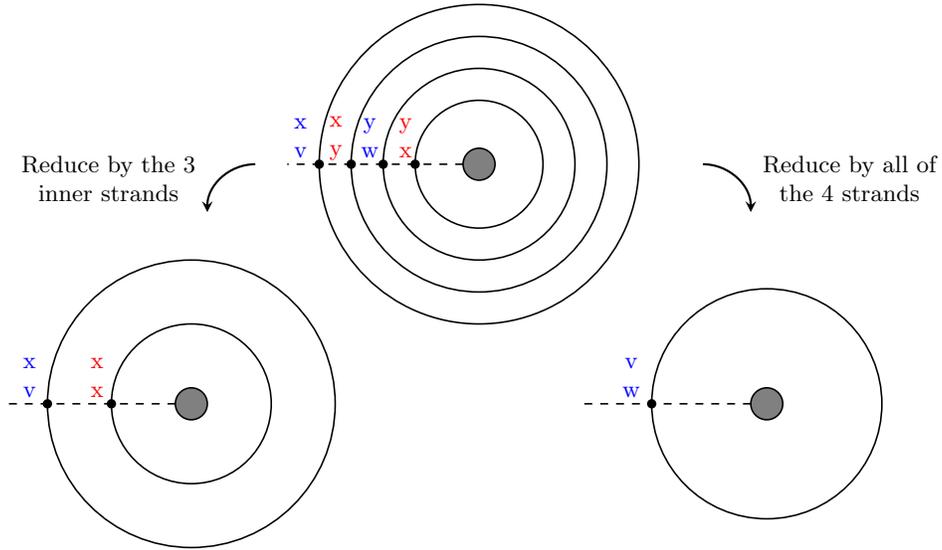
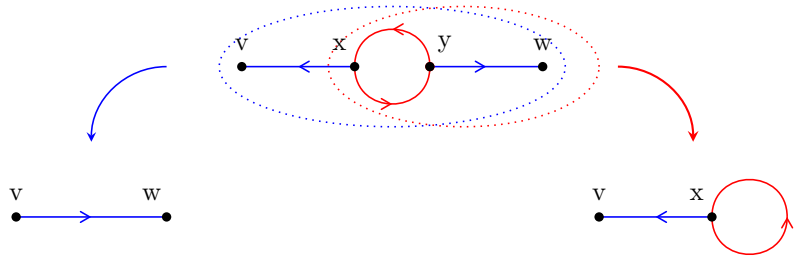

\subsubsection{Reduction-confluent Replacement Rules}
\label{SUB reduction systems}

Given a set of replacement rules $(R, \mathrm{C})$, we define its \textbf{reduction system} as the directed graph whose set of vertices is the set of all graphs with edges colored by $\mathrm{C}$ and whose edges are described by the anti-expansions of $(R, \mathrm{C})$.
More explicitly, given a graph $\Gamma$ and a color $c \in \mathrm{C}$, suppose that $\Gamma$ contains a subgraph $\Delta$ that is isomorphic to $R_c$ with the possible exception of having $\iota$ and $\tau$ glued together, and such that each edge of $\Gamma$ that is adjacent to some vertex of $\Delta$ except for $\iota$ and $\tau$ belongs to $\Delta$.
In this case, there is an edge ($\Gamma$, $\Gamma^*$), where $\Gamma^*$ is the graph obtained from $\Gamma$ by replacing $\Delta$ with an edge starting at $\iota$ and terminating at $\tau$ colored by $c$.

\begin{definition}\label{DEF red-conf}
    We say that a set of replacement rules $(R, \mathrm{C})$ is \textbf{reduction-confluent} if its reduction system is confluent
    (i.e., whenever $a \dashrightarrow b$ and $a \dashrightarrow c$ are two finite sequences of reductions of $a$, there exist a graph expansion $d$ and two finite sequences of reductions $b \dashrightarrow d$ and $c \dashrightarrow d$).
\end{definition}

Now, if the replacement rules are both reduction-confluent and expanding (\cref{DEF expanding}), then it is clear that the reduction system is terminating.
So, using Newman's Diamond Lemma from \cite{NewmanDiamond}, we have that each connected component of the rewriting system has a unique reduced graph.

\phantomsection\label{TXT Basilica Vicsek BubbleBath Houghton QV}
The replacement systems for Thompson groups $F, T$ and $V$ (and the Higman-Thompson groups) are clearly reduction-confluent.
In truth, most of the replacement systems discussed in literature (see \cref{SUB examples}) are reduction-confluent:
the Basilica, the Vicsek and the Bubble Bath reduction rules (\cref{fig replacement B,,fig replacement Vic,,fig replacement BB}, respectively) are confluent, and the same holds for their generalized versions (the rabbit and the generalized Vicsek replacement systems).
The replacement rules for the Houghton groups $H_n$ (from \cite{Houghton1978TheFC}; see \cref{fig Houghton}) and for the groups $QV$, $QT$, $QF$, $\tilde{Q}V$ and $\tilde{Q}T$ (originally from \cite{QV}, also studied in \cite{QV1,QV2}; see \cref{fig QV}) are reduction-confluent too.
All of this can be easily proved using Newman's Diamond Lemma.

For example, consider the Basilica replacement system.
We say that two graph reductions of the same graph are disjoint if they involve subgraphs that share no edges (they can share vertices, provided they are the initial or terminal vertices of the replacement graph).
If two reductions are disjoint, then each can be applied after the other and there is nothing to prove.
Let $A$ and $B$ be two distinct graph reductions of the same graph that are not disjoint.
Then, since each reduction must identify a copy of the Basilica replacement graph (\cref{fig replacement B}), $A$ and $B$ cannot involve the same loop, otherwise they would be the same reduction.
Thus there are two cases, both portrayed in \cref{fig_B_reduction_confluent}.
In each case, both reductions produce the same graph, so we are done.

The Airplane replacement system instead is not reduction-confluent, as the base graph itself can be reduced in three ways that produce two distinct reduced graphs (these correspond to the ones shown in \cref{fig_TA_reduced}).

\begin{remark}
Among the replacement systems discussed here, the only one with more than one color happens to be exactly the one that is not reduction-confluent.
This is just a coincidence, as it is not hard to build replacement systems with one color that are not reduction-confluent and replacement systems that are reduction-confluent despite having more than one color.
\end{remark}

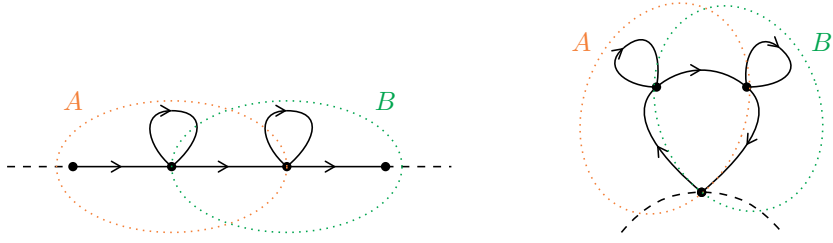
\begin{figure}\centering
\begin{subfigure}[b]{.575\textwidth}\centering
\begin{tikzpicture}[scale=.875]
    \draw[->-=.5] (-.5,0) node[circle,fill,inner sep=1.25]{} -- (1,0) node[circle,fill,inner sep=1.25]{};
    \draw[->-=.5] (1,0) -- (2.75,0) node[circle,fill,inner sep=1.25]{};
    \draw[->-=.5] (2.75,0) -- (4.25,0) node[circle,fill,inner sep=1.25]{};
    \draw[dashed] (-1.5,0) -- (-.5,0);
    \draw[dashed] (4.25,0) -- (5.25,0);
    \draw (1,0) to[out=140,in=180,looseness=1.5] (1,0.85); \draw[->-=0] (1,0.85) to[out=0,in=40,looseness=1.75] (1,0);
    \draw (2.75,0) to[out=140,in=180,looseness=1.5] (2.75,0.85); \draw[->-=0] (2.75,0.85) to[out=0,in=40,looseness=1.75] (2.75,0);
    \draw[dotted, Orange] (1,0) ellipse (1.75cm and 1cm);
    \draw[Orange] (-.5,1) node{$A$};
    \draw[dotted, Green] (2.75,0) ellipse (1.75cm and 1cm);
    \draw[Green] (4.25,1) node{$B$};
\end{tikzpicture}
\end{subfigure}
\begin{subfigure}[b]{.4\textwidth}\centering
\begin{tikzpicture}[scale=.8]
    \draw[->-=.5] (0,0) node[circle,fill,inner sep=1.25]{} to[out=140,in=220] (-.75,1.75) node[circle,fill,inner sep=1.25]{};
    \draw[->-=.5] (-.75,1.75) to[out=40,in=140] (.75,1.75) node[circle,fill,inner sep=1.25]{};
    \draw[->-=.5] (.75,1.75) to[out=-40,in=40] (0,0) node[circle,fill,inner sep=1.25]{};
    \draw[dashed] (-1.333,-.667) to[out=50,in=180] (0,0) to[out=0,in=130] (1.333,-.667);
    \begin{scope}[xshift=-.75cm,yshift=1.75cm,rotate=40]
    \draw (0,0) to[out=140,in=180,looseness=1.5] (0,0.85); \draw[->-=0] (0,0.85) to[out=0,in=40,looseness=1.75] (0,0);
    \end{scope}
    \begin{scope}[xshift=.75cm,yshift=1.75cm,rotate=-40]
    \draw (0,0) to[out=140,in=180,looseness=1.5] (0,0.85); \draw[->-=0] (0,0.85) to[out=0,in=40,looseness=1.75] (0,0);
    \end{scope}
    \draw[dotted, Orange, rotate=-20] (-1.05,1.1) ellipse (1.35cm and 1.8cm);
    \draw[Orange] (-2,2.5) node{$A$};
    \draw[dotted, Green, rotate=20] (1.05,1.1) ellipse (1.35cm and 1.75cm);
    \draw[Green] (2,2.5) node{$B$};
\end{tikzpicture}
\end{subfigure}
\caption{The only two possible distinct non-disjoint graph reductions of the Basilica replacement rules.}
\label{fig_B_reduction_confluent}
\end{figure}

\subsubsection{Uniqueness of Reduced Closed Diagrams}

The following Lemma tells us that, if we are dealing with a replacement system that is based on reduction-confluent replacement rules, then each closed strand diagram is equivalent to a unique reduced closed strand diagram up to similarity.

\begin{lemma}
\label{LEM reduced CSD}
Suppose that the replacement rules $(R, \mathrm{C})$ are reduction-confluent.
Then, for each similarity class $\llbracket \eta \rrbracket$ of $R$-branching closed diagrams, there is a unique reduced similarity class of $R$-branching closed diagrams that is equivalent to $\llbracket \eta \rrbracket$.
\end{lemma}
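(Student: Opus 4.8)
The plan is to apply Newman's Diamond Lemma (\cite{NewmanDiamond}), exactly as in the proof of \cref{LEM reduced SD}, but now to the rewriting system whose vertices are the \emph{similarity classes} of $R$-branching closed diagrams and whose edges are the (similarity classes of) Type~1, Type~2 and Type~3 reductions. Passing to similarity classes is what lets us ignore the fact that a reduction cannot cross the base line: a shift is a similarity, so by shifts and permutations we may always reposition the base line away from the splits and merges involved in a prospective Type~1 or Type~2 reduction, and hence a move $\llbracket\eta\rrbracket\longrightarrow\llbracket\eta'\rrbracket$ is available exactly when some representative of $\llbracket\eta\rrbracket$ admits the corresponding reduction. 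Since reductions and similarities both preserve $R$-branching-ness, it then suffices to prove that this rewriting system is terminating and locally confluent; the existence of a unique reduced similarity class equivalent to $\llbracket\eta\rrbracket$ is precisely the conclusion of Newman's Lemma.

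For \textbf{termination}, I would attach to each similarity class the lexicographic pair $(s,\ell)$, where $s$ is the number of splits and merges and $\ell$ the number of looping strands of any representative; neither is altered by shifts or permutations (see \cref{RMK cohomology}), so both depend only on the similarity class. A Type~1 or Type~2 reduction destroys one split and one merge (\cref{fig_SD_reductions}) and creates no new ones, so it strictly decreases $s$; a Type~3 reduction leaves $s$ untouched --- looping strands carry no splits or merges --- but replaces $n\ge 2$ looping strands by a single one, so it decreases $\ell$ by $n-1\ge 1$. In either case $(s,\ell)$ drops in the well-founded lexicographic order, so no infinite chain of reductions exists.

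For \textbf{local confluence}, let $\llbracket\eta\rrbracket\longrightarrow\llbracket\alpha\rrbracket$ and $\llbracket\eta\rrbracket\longrightarrow\llbracket\beta\rrbracket$ be distinct reductions. If both have Type~1 or Type~2, we argue as in \cref{LEM reduced SD}: choosing a representative of $\llbracket\eta\rrbracket$ whose base line avoids all the splits and merges concerned, the two reductions either act on disjoint splits and merges, and therefore commute, or they overlap in one of the configurations of \cref{fig_strand_diamond}, in which case they already yield the same diagram. If one has Type~1 or Type~2 and the other has Type~3, the split--merge gadget of the former and the looping strands of the latter are vertex-disjoint --- a looping strand meets no split or merge, and each of its vertices is a base point lying on no other strand --- so the two reductions commute. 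The remaining, and essential, case is that of two Type~3 reductions: a Type~3 reduction acts only on a collection of looping strands, and reading off the labels along the base line it amounts to a finite sequence of anti-expansions of the base graph (one per full winding), the side condition on ``deleted'' symbols ensuring legitimacy, while leaving the rest of the diagram untouched. After a permutation of the base points putting the looping strands in the required order, the two Type~3 reductions thus correspond to two sequences of anti-expansions of the base graph of $\eta$, and the reduction-confluence hypothesis (\cref{DEF red-conf}) supplies a common graph reached from both by further anti-expansions; performing the corresponding Type~3 reductions on $\llbracket\alpha\rrbracket$ and $\llbracket\beta\rrbracket$ produces the same reduced similarity class.

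The main obstacle is this last case: one must pin down carefully the dictionary between Type~3 reductions of closed diagrams --- with their winding numbers and the permutations of base points they may require --- and anti-expansions of the base graph, and check that the interposed permutations and shifts never obstruct the completion, so that \cref{DEF red-conf} applies cleanly. Once this correspondence is in place, what remains is a routine instance of Newman's Lemma.
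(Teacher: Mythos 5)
Your overall strategy (Newman's Lemma on similarity classes, termination via a decreasing measure, case analysis for local confluence, reduction-confluence for the Type~3/Type~3 case) is the same as the paper's, and your lexicographic termination measure is in fact a careful and correct way to handle Type~3 reductions. However, there is a genuine gap in your local confluence analysis for two reductions of Types~1 and~2. You claim that, after repositioning the base line, two non-disjoint such reductions must overlap ``in one of the configurations of \cref{fig_strand_diamond}, in which case they already yield the same diagram.'' That dichotomy is valid for \emph{open} strand diagrams (it is exactly \cref{LEM reduced SD}), but closure creates a new overlap that does not exist in the acyclic setting: a single $X$-split whose bottom strands all feed into a single $X$-merge, with the merge's output strand wrapping around the hole back into the split's input (the configuration of \cref{fig_onion}). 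Depending on where the base line sits, this configuration admits both a Type~1 reduction (split above merge) and a Type~2 reduction (merge above split, reading around the loop), and these do \emph{not} produce the same diagram: the Type~1 reduction yields one looping strand, while the Type~2 reduction yields $n$ parallel looping strands. Local confluence still holds, but only because the $n$ looping strands can then be collapsed by a Type~3 reduction to match the outcome of the Type~1 reduction; this extra step is exactly how the paper resolves the case, and your argument as written asserts a false statement at this point rather than supplying the missing Type~3 move.

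The rest of your proposal matches the paper: the Type~1/2 versus Type~3 case is handled by disjointness since looping strands carry no splits or merges, and the Type~3 versus Type~3 case is where reduction-confluence of $(R,\mathrm{C})$ enters, via the correspondence between Type~3 reductions and anti-expansions of the base graph (the paper makes this precise by taking the union $\Delta$ of the two subgraphs being collapsed and completing $\Delta^A$, $\Delta^B$ to a common reduction $\Delta^*$). So the fix is local: in the Type~1/2 overlap analysis, add the closed-diagram exception and close it with a Type~3 reduction.
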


\begin{proof}
As we did earlier for \cref{LEM reduced SD}, we will use Newman's Diamond Lemma from \cite{NewmanDiamond}.
Consider the directed graph defined as follows: the set of vertices is the set of similarity classes of $R$-branching closed diagrams and we have an edge $\llbracket \eta \rrbracket \longrightarrow \llbracket \zeta \rrbracket$ for each distinct reduction from a closed diagram equivalent to $\eta$ to one equivalent to $\zeta$.
It then suffices to prove that this graph is terminating and locally confluent.

It is clear that similar closed diagrams have the same number of splits and merges.
Since each reduction strictly decreases the number of splits and merges of a closed diagram, the directed graph is terminating, so we only need to prove the local confluence.

Suppose $\eta, \zeta$ and $\kappa$ are closed strand diagrams such that $\llbracket \eta \rrbracket \overset{A}{\longrightarrow} \llbracket \zeta \rrbracket$ and $\llbracket \eta \rrbracket \overset{B}{\longrightarrow} \llbracket \kappa \rrbracket$ are distinct reductions.
Without loss of generality we may assume that the reductions of closed diagrams are $\eta_A \overset{A}{\longrightarrow} \zeta$ and $\eta_B \overset{B}{\longrightarrow} \kappa$, where $\eta_A$ and $\eta_B$ are similar.
This means that the reductions $A$ and $B$ are performed without shifting the base line nor permutating the order of the base points, whereas we can transform $\eta_A$ into $\eta_B$ with a finite sequence of permutations, shifts and inversions.
Note that then $\eta_A$ and $\eta_B$ have the same strands, splits and merges, and the only possible distinctions must involve the position of the base line and the order of the base points.

First, it is easy to see that if none of the reductions $A$ and $B$ are of Type 3 then, with a sole exception, they must be disjoint, and if they are disjoint the same proof of \cref{LEM reduced SD} holds.
The exception is represented in \cref{fig_onion}:
in this case, after a Type 2 reduction on $\llbracket \eta \rrbracket$ we can apply a Type 3 reduction to get the same diagram obtained by performing a Type 1 reduction on $\llbracket \eta \rrbracket$, so local confluence is verified in this case.
Therefore, we can suppose that $B$ is of Type 3.
But then, since Type 3 reductions can only involve strands devoid of splits and merges, it is clear that if $A$ is of Type 1 or 2 then $A$ and $B$ are disjoint (by which we mean that they involve different strands), so we are done in this case.

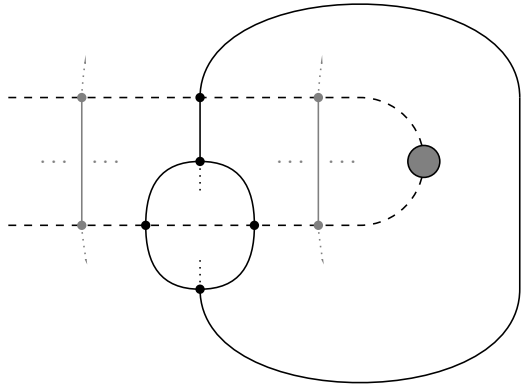
\begin{figure}\centering
\begin{tikzpicture}[font=\small,scale=.85]
    \useasboundingbox (-3,-4.5) rectangle (5,1.5);
    %
    \draw (0,-1) -- (0,0) node[black,circle,fill,inner sep=1.25]{} to[out=90,in=90] (5,0);
    \draw (0,-1) node[black,circle,fill,inner sep=1.25]{} to[out=180,in=90,looseness=1.2] (-.85,-2);
    \draw[dotted] (0,-1) -- (0,-1.5);
    \draw (0,-1) to[out=0,in=90,looseness=1.2] (.85,-2) node[black,circle,fill,inner sep=1.25]{};
    \draw (0,-3) node[black,circle,fill,inner sep=1.25]{} to[out=180,in=270,looseness=1.2] (-.85,-2) node[black,circle,fill,inner sep=1.25]{};
    \draw[dotted] (0,-3) -- (0,-2.5);
    \draw (0,-3) to[out=0,in=270,looseness=1.2] (.85,-2);
    \draw (0,-3) to[out=270,in=270] (5,-3) -- (5,0);
    \begin{scope}[xshift=3.5cm,yshift=-1cm]
    \draw[dashed] (0,0) to[out=90,in=0] (-1,1) -- (-6.5,1);
    \draw[dashed] (0,0) to[out=270,in=0] (-1,-1) -- (-6.5,-1);
    \draw[fill=gray] (0,0) circle (.25);
    \end{scope}
    \draw[gray,dotted] (-1.85,-2) to[out=270,in=285] (-1.8,-2.5);
    \draw[gray] (-1.85,-2) node[circle,fill,inner sep=1.25]{} -- node[left]{$\dots$} node[right]{$\dots$} (-1.85,0) node[circle,fill,inner sep=1.25]{};
    \draw[gray,dotted] (-1.85,0) to[out=90,in=85] (-1.8,.5);
    \draw[gray,dotted] (1.85,-2) to[out=270,in=285] (1.9,-2.5);
    \draw[gray] (1.85,-2) node[circle,fill,inner sep=1.25]{} -- node[left]{$\dots$} node[right]{$\dots$} (1.85,0) node[circle,fill,inner sep=1.25]{};
    \draw[gray,dotted] (1.85,0) to[out=90,in=85] (1.9,.5);
\end{tikzpicture}
\caption{The only possible non-disjoint Type 1 and 2 reductions from the proof of \cref{LEM reduced CSD}. Labels are omitted for the sake of clarity, but they should allow a Type 1 reduction and a Type 2 reduction, and the two base lines represented differ by a shift that ``unlocks'' these reductions.}
\label{fig_onion}
\end{figure}

Finally, suppose that $A$ and $B$ are both of Type 3.
Consider the base graph $\Gamma$ of the diagram (which is the graph represented by the base points, as seen in \cref{SUB closed strand diagrams}).
Observe that Type 3 reductions (defined at \cpageref{TXT 3 reductions}) correspond to reductions of the base graph $\Gamma$ of the closed strand diagram, and it is clear that distinct Type 3 reductions describe distinct reductions of the base graph.
Now, if the reductions $A$ and $B$ are disjoint, by which we mean that they involve different strands of $\llbracket \eta \rrbracket$, then $B$ can be applied to $\zeta$ and $A$ to $\kappa$, so we are done.
If instead they are not disjoint, then consider the subgraph $\Delta$ of $\Gamma$ that is involved in both reductions.
More precisely, $\Delta$ is the union of the two subgraphs isomorphic to replacement graphs which determine the reductions induced by $A$ and $B$ on $\Gamma$.
Denote by $\Delta^A$ and $\Delta^B$ the graphs obtained from $\Delta$ by applying these reductions.
Now, because the replacement rules $(R, \mathrm{C})$ are reduction-confluent, there is a graph $\Delta^*$ that is a common reduction of both $\Delta^A$ and $\Delta^B$.
The graph reductions that one needs to perform to obtain $\Delta^*$ from $\Delta^A$ and $\Delta^B$ can clearly be performed on the copy of $\Delta$ contained as a subgraph in $\Gamma$, and these correspond to Type 3 reductions of $\llbracket \eta \rrbracket$ that make these reductions locally confluent, so we are done.
\end{proof}

It is worth noting that the reduction-confluent hypothesis is only needed to prove confluence of Type 3 reductions.
Type 1 and 2 reductions by themselves are always confluent.

\subsection{Stable and Vanishing Symbols}
\label{SUB stable and vanishing}

In this subsection we investigate what happens when a sequence of shifts moves the base line back to where it started.
This will be important when developing the algorithm for solving the conjugacy problem in \cref{SUB algorithm}

Let $Z$ be an $\mathcal{X}$-strand diagram for some replacement system $\mathcal{X}$ (as defined at \cpageref{TXT X-SDs}).
Denote by $Z^\infty$ the infinite power of $Z$, which is the infinite strand diagram obtained by inductively taking higher and higher powers of $Z$ and keeping track of the lines under which the copies of $Z$ are attached.
More precisely, in order to build $Z^\infty$, start from $Z$ with two ``base lines'', one at the top and one at the bottom;
these will be called \textbf{main base lines}.
Start attaching copies of $Z$ at both sides, drawing new base lines where the elements are glued.
Recall that labels need to be adjusted when gluing together the copies of $Z$, as described in \cref{SUB SDs composition};
keep unchanged the labels of the original copy of $Z$, and adjust the labels above and below the main base lines.
The result of this infinite procedure is $Z^\infty$.
An example is portrayed in \cref{fig_infinite_power}, where the main base lines are depicted in green.

\begin{definition}
    Each symbol that appears in $Z$ is either:
    \begin{itemize}
        \item \textbf{stable} if it appears infinitely many times in $Z^\infty$,
        \item \textbf{vanishing} if it only appears finitely many times in $Z^\infty$.
    \end{itemize}
    In other words, stable symbols appear throughout all of $Z^\infty$, whereas the instances of a vanishing symbol are limited to a finite portion of $Z^\infty$ (i.e., a composition of finitely many copies of $Z$ inside $Z^\infty$).
    Thus, all instances of a vanishing symbol are contained in some minimal portion $Z^I \subset Z^\infty$, where $I$ is an interval of integers, and outside of $Z^I$ such a symbol is replaced by other vanishing symbols.
\end{definition}

In the example portrayed in \cref{fig_infinite_power}, the symbols a, b, c, d, e and f are stable, while the symbols x\textsubscript{$i$} and y\textsubscript{$i$} ($i \in \mathbb{Z}$) are vanishing.

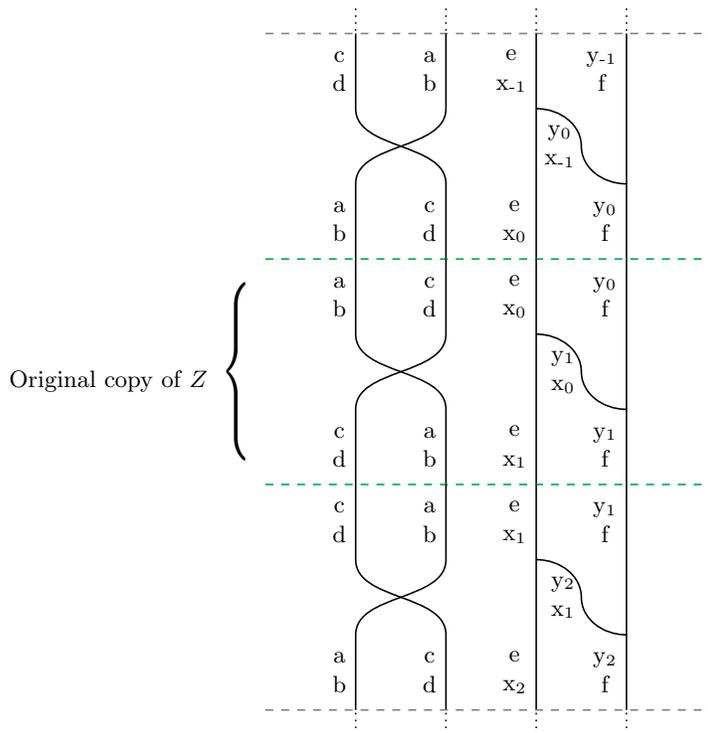
\begin{figure}\centering
\begin{tikzpicture}[font=\small,xscale=1.2]
    \begin{scope}[yshift=3cm]
    \draw[dotted] (0,.333) -- (0,0);
    \draw[dotted] (1,.333) -- (1,0);
    \draw[dotted] (2,.333) -- (2,0);
    \draw[dotted] (3,.333) -- (3,0);
    \draw[dashed,gray] (-1,0) -- (4,0);
    \draw (0,0) -- node[left,align=center]{c\\d} (0,-1) to[out=270,in=90] (1,-2) -- node[left,align=center]{c\\d} (1,-3);
    \draw (1,0) -- node[left,align=center]{a\\b} (1,-1) to[out=270,in=90] (0,-2) -- node[left,align=center]{a\\b} (0,-3);
    \draw (2,0) -- node[left,align=center]{e\\x\textsubscript{-1}} (2,-1) -- (2,-2) -- node[left,align=center]{e\\x\textsubscript{0}} (2,-3);
    \draw (3,0) -- node[left,align=center]{y\textsubscript{-1}\\f} (3,-1) -- (3,-2) -- node[left,align=center]{y\textsubscript{0}\\f} (3,-3);
    \draw (2,-1) to[out=0,in=90] (2.5,-1.5) node[left,align=center,xshift=1.25pt]{y\textsubscript{0}\\x\textsubscript{-1}} to[out=270,in=180] (3,-2);
    \end{scope}
    \draw[dashed,Green] (-1,0) -- (4,0);
    \draw (0,0) -- node[left,align=center]{a\\b} (0,-1) to[out=270,in=90] (1,-2) -- node[left,align=center]{a\\b} (1,-3);
    \draw (1,0) -- node[left,align=center]{c\\d} (1,-1) to[out=270,in=90] (0,-2) -- node[left,align=center]{c\\d} (0,-3);
    \draw (2,0) -- node[left,align=center]{e\\x\textsubscript{0}} (2,-1) -- (2,-2) -- node[left,align=center]{e\\x\textsubscript{1}} (2,-3);
    \draw (3,0) -- node[left,align=center]{y\textsubscript{0}\\f} (3,-1) -- (3,-2) -- node[left,align=center]{y\textsubscript{1}\\f} (3,-3);
    \draw (2,-1) to[out=0,in=90] (2.5,-1.5) node[left,align=center,xshift=1.25pt]{y\textsubscript{1}\\x\textsubscript{0}} to[out=270,in=180] (3,-2);
    \draw[dashed,Green] (-1,-3) -- (4,-3);
    \draw (-2.5,-1.5) node{Original copy of $Z \:$ \scalebox{1.5}[2.5]{$\Biggl\{$}};
    \begin{scope}[yshift=-3cm]
    \draw (0,0) -- node[left,align=center]{c\\d} (0,-1) to[out=270,in=90] (1,-2) -- node[left,align=center]{c\\d} (1,-3);
    \draw (1,0) -- node[left,align=center]{a\\b} (1,-1) to[out=270,in=90] (0,-2) -- node[left,align=center]{a\\b} (0,-3);
    \draw (2,0) -- node[left,align=center]{e\\x\textsubscript{1}} (2,-1) -- (2,-2) -- node[left,align=center]{e\\x\textsubscript{2}} (2,-3);
    \draw (3,0) -- node[left,align=center]{y\textsubscript{1}\\f} (3,-1) -- (3,-2) -- node[left,align=center]{y\textsubscript{2}\\f} (3,-3);
    \draw (2,-1) to[out=0,in=90] (2.5,-1.5) node[left,align=center,xshift=1.25pt]{y\textsubscript{2}\\x\textsubscript{1}} to[out=270,in=180] (3,-2);
    \draw[dashed,gray] (-1,-3) -- (4,-3);
    \draw[dotted] (0,-3.333) -- (0,-3);
    \draw[dotted] (1,-3.333) -- (1,-3);
    \draw[dotted] (2,-3.333) -- (2,-3);
    \draw[dotted] (3,-3.333) -- (3,-3);
    \end{scope}
\end{tikzpicture}
\caption{An example of $Z^\infty$ for an element $Z$ based on the replacement rules for Thompson's group $F$, depicted in \cref{fig_replacement_interval}.}
\label{fig_infinite_power}
\end{figure}

For the curious reader, keeping in mind that each symbol represents a vertex in a graph expansion, one can see that each stable symbol corresponds to a point of the limit space that is periodic under the action of the rearrangement represented by $Z$.

\begin{remark}
\label{RMK vanishing symbols}
    Because of point (3) of \cref{DEF R-branching}, if a symbol is vanishing, then it can only appear in one connected component of the closed diagram $\zeta$ obtained from $Z$.
\end{remark}

\begin{remark}
\label{RMK stable symbols}
    Observe that there is a finite amount of distinguished stable symbols, even if (by definition) there are infinitely many instances in which each stable symbol appears.
    Indeed, each copy of $Z$ contains finitely many symbols, and the stable symbols are the same in every copy of $Z$ featured in $Z^\infty$ (even if they may switch places).
\end{remark}

The next Proposition tells us that moving the base line does not change the labeling, up to a permutation of the stable symbols (for example, in \cref{fig_infinite_power} the pairs of symbols (a,b) and (c,d) may be switched), and the results of this permutation can be listed by an algorithm, as stated in the Corollary below.

\begin{proposition}
    Let $\zeta$ be the closed diagram obtained from $Z$.
    If a sequence of shifts moves the base line of $\zeta$ back to the original position, then the resulting labeling of the diagram remains the same up to renaming symbols.
    This renaming induces a permutation of the stable symbols in each connected component of $\zeta$.
\end{proposition}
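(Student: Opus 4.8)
The plan is to pass to the infinite cyclic cover of $\zeta$, which is precisely $Z^\infty$ (the diagram obtained by unrolling $\zeta$ around its central hole), and to translate a loop of shifts into a rigid translation of this cover. Before doing so, I would record a structural fact that does most of the work: a shift of the base line never alters the underlying directed graph of $\zeta$, its colouring, its rotation system, or its set of splits and merges; it only relocates the base line across one split or merge and adjusts the labels near that split or merge so that the diagram remains $R$-branching (\cref{DEF R-branching}). Consequently, if a finite sequence of shifts brings the base line back to the original position, the only feature that can have changed is the labelling, and it remains to control how the labels can change. (Equivalently, one can phrase this using \cref{RMK cohomology}: the cut determined by the base line is a fixed $1$-cochain once the base line has returned, so the remaining freedom is purely in the labels.)

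Next I would identify $Z^\infty$ with the cyclic cover of $\zeta$: the base line of $\zeta$ lifts to the bi-infinite ladder of base lines $(\ell_i)_{i\in\mathbb{Z}}$ of $Z^\infty$, and the deck transformation ``shift by one copy of $Z$'' sends $\ell_i \mapsto \ell_{i+1}$, permutes the finitely many stable symbols (using \cref{RMK stable symbols}) by a fixed permutation $\pi$, and sends each vanishing symbol to a new vanishing symbol (this is exactly the defining dichotomy of stable versus vanishing symbols, together with \cref{RMK vanishing symbols}, which confines vanishing symbols to a single connected component). A single shift of $\zeta$ lifts to the simultaneous shift of every $\ell_i$ past the corresponding lift of one split or merge, so a finite sequence of shifts lifts to a rigid sliding of the whole ladder. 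If this sequence returns the base line of $\zeta$ to the original cut, the ladder is slid to a position at which $\{\ell_i\}$ is the same set; the copy of $Z$ squeezed between two consecutive lines at the new position is then $Z$ with its stable symbols permuted by $\pi^{\,n}$, where $n$ is the net winding number of the base line around the hole during the sequence (interpreted component by component, which is legitimate since shifts act on one component at a time and, by \cref{RMK vanishing symbols}, vanishing symbols cannot straddle components).

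Finally I would cut $\zeta$ along the returned base line to obtain a strand diagram $Z'$: its labels are the ones occurring in that new middle copy of $Z$, so its stable symbols coincide with those of $Z$, permuted within each connected component by a power of the corresponding $\pi$, while its vanishing symbols, being confined to a finite window of $Z^\infty$ and replaced outside that window, may simply be renamed to match those of $Z$. Thus $Z'$, and hence the closed diagram it reconstitutes, agrees with $\zeta$ up to a renaming of symbols that restricts to a permutation of the stable symbols in each connected component, which is the assertion.

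The step I expect to be the main obstacle is making the identification of the cyclic cover with $Z^\infty$ precise and verifying that shifts commute with the covering projection, so that a loop of shifts really does act as a rigid translation of the ladder $(\ell_i)$; tied to this is the bookkeeping of the symbols generated and destroyed by individual (expanding and reducing) shifts, and the argument that these are exactly reabsorbed by the final renaming, uniformly over the connected components. Everything else is routine once this correspondence is in place.
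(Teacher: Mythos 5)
Your proposal is correct and follows essentially the same route as the paper: lift the loop of shifts to $Z^\infty$, observe that returning the base line amounts to sliding the window by an integer number of copies of $Z$ through each connected component, and then invoke \cref{RMK vanishing symbols} to rename the vanishing symbols and \cref{RMK stable symbols} to conclude that the finitely many stable symbols are merely permuted within each component. The extra details you supply (the per-copy deck permutation $\pi$ and its power given by the winding number) are consistent refinements of the paper's argument rather than a different method.
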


\begin{proof}
    Observe that each shift of the base line of $\zeta$ corresponds to applying the same shift to the base lines of $Z^\infty$.
    Thus, moving the base line of $\zeta$ to the original position is equivalent to moving the ``window'' between the two main base lines to some other portion of $Z^\infty$ that is the same up to renaming symbols.
    More precisely, this results in moving every base line of $Z^\infty$ up or down by $n_i$ steps (where $n_i \in \mathbb{Z}$) through each connected component $Z_i$ of $Z^\infty$.
    Now, as noted in \cref{RMK vanishing symbols}, vanishing symbols are distinct in each connected component, so one can always rename them to get them back to the original configuration.
    Instead, by \cref{RMK stable symbols} there is a finite amount of distinguished stable symbols, so they must be permuted independently in each connected component.
\end{proof}

Since there are only finitely many distinguished stable symbols, the permutation associated by cycling once through a connected component of a closed strand diagram has finite order.
Thus, if one cycles through the base line in this way multiple times, at some point one will find the original configuration of labels of the connected component (up to renaming the vanishing symbols).
Going through this process for every connected component yields an algorithm that, by exhaustion, lists every possible such configuration of the stable symbols, so we have the following.

\begin{corollary}
\label{COR labels}
    There is an algorithm that lists the finitely many possible configurations of labels of a closed diagram $\zeta$ (up to renaming) obtained by shifting the base line from a fixed position back to itself.
\end{corollary}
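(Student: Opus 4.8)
The plan is to use the Proposition just above, which already does the conceptual work: any sequence of shifts that returns the base line of $\zeta$ to its original position acts on the labels by a renaming that, in each connected component, can be taken to fix a chosen normal form of the vanishing symbols and to permute the stable symbols. First I would record that, by \cref{RMK stable symbols}, each connected component carries only finitely many distinguished stable symbols, so there are only finitely many such permutations and hence only finitely many resulting label configurations. The substance of the statement is therefore not finiteness but effectiveness: producing the list.

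Next I would isolate, for each connected component $Z_i$ of the closed diagram ($i = 1, \dots, r$, with $r$ finite since $\zeta$ is a finite graph), a single distinguished transformation $\phi_i$, namely the effect on $\zeta$ of advancing the ``window'' of $Z^\infty$ by one copy of $Z$ within the component $Z_i$ and leaving the other components untouched, exactly as in the proof of the Proposition. This $\phi_i$ is realized by an explicit finite sequence of shifts (the shifts that successively expand the splits and merges of one copy of $Z$ inside that component and then reduce the mirrored ones on the far side), so it is computable; and by the Proposition it induces a permutation $\pi_i$ of the finitely many stable symbols of $Z_i$, which therefore has finite order $m_i$ (bounded by the factorial of the number of stable symbols of $Z_i$).

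The key step is then the decomposition: every way of shifting the base line back to itself induces, up to renaming the vanishing symbols, a relabeling of the form $\prod_{i=1}^{r} \phi_i^{a_i}$ with $a_i \in \mathbb{Z}$. A sequence of shifts moves the base line around each connected component some net integer number of times, and within a single component the only freedom a shift provides is to slide the base line one step along the loop; so, for the base line to return to its starting position, the net displacement in $Z_i$ must be an integer number of $Z$-periods, and the induced effect on the stable symbols of $Z_i$ is a power of $\pi_i$; the components being independent, the total relabeling is the corresponding product. Hence the achievable relabelings of stable symbols form the finite group $\prod_{i=1}^{r} \langle \pi_i \rangle$, and the algorithm is: for each $i$ compute $\pi_i$ and its order $m_i$ by iterating $\phi_i$ until the labeling of $Z_i$ returns to its initial form; then output, for every tuple $(a_1, \dots, a_r)$ with $0 \le a_i < m_i$, the configuration obtained by applying $\prod_i \phi_i^{a_i}$, recording only the stable symbols with a fixed normalization of the vanishing ones (legitimate by \cref{RMK vanishing symbols}). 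By construction this list is exactly the set of configurations reachable by shifting the base line back to itself.

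I expect the main obstacle to be this decomposition claim — rigorously verifying that an arbitrary base-line-returning sequence of shifts has the same effect on labels as some $\prod_i \phi_i^{a_i}$. This means carefully tracking, across the connected components, the winding of the base line and confirming that the $Z^\infty$ bookkeeping from the proof of the Proposition really does force the net displacement in each component to be an integer number of $Z$-periods and that no relabelings beyond the powers of $\pi_i$ can arise. Everything else — finiteness of the stable symbols, termination of the iteration of $\phi_i$, the explicit realization of $\phi_i$ by shifts, and the normalization of vanishing symbols — is routine given the machinery already established.
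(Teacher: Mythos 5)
Your proposal is correct and follows essentially the same route as the paper: the paper's justification is exactly the observation that cycling the base line once around a connected component induces a finite-order permutation of the finitely many stable symbols, so iterating this per component until the labeling returns (up to renaming vanishing symbols) and exhausting over all components lists all configurations. The decomposition you flag as the main obstacle is already supplied by the preceding Proposition, whose proof shows that any base-line-returning sequence of shifts amounts to moving the window of $Z^\infty$ by some integer number of $Z$-periods independently in each connected component.
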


\section{Solving the Conjugacy Problem}
\label{SEC conjugacy problem}

In this last Section we examine conjugacy in the case of reduction-confluent replacement rules (\cref{SUB conf}) and we provide a pseudo-algorithm to solve the conjugacy problem under this assumption (\cref{SUB algorithm}).
Then we discuss the problem without assumption of reduction-confluence, solving the conjugacy problem in the Airplane rearrangement group (\cref{SUB non conf}).
Finally, in \cref{SUB DPO} we link this problematic to the general setting of DPO graph rewriting systems studied in computer science.

\subsection{Reduction-confluent Replacement Rules}
\label{SUB conf}

Let $\mathcal{X} = (X_0, R, \mathrm{C})$ be a fixed a replacement system.

\begin{proposition}
Suppose that $(R, \mathrm{C})$ are reduction-confluent replacement rules.
Then, if two rearrangements $f$ and $g$ of the same replacement system $\mathcal{X}$ are conjugate in $G_\mathcal{X}$, their reduced closed diagrams are similar.
\end{proposition}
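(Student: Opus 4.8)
The plan is to reduce the statement to the uniqueness of reduced closed diagrams (\cref{LEM reduced CSD}), which is available here precisely because $(R,\mathrm{C})$ is reduction-confluent. Write $g = h f h^{-1}$ with $h \in G_\mathcal{X}$, and realize $f$, $g$, $h$ by reduced $\mathcal{X}$-strand diagrams (see \cpageref{TXT X-SDs}). It then suffices to show that $\llbracket f \rrbracket$ and $\llbracket g \rrbracket$ lie in a single equivalence class of closed diagrams modulo similarity: since reduction-confluence forces that class to contain a \emph{unique} reduced similarity class (\cref{LEM reduced CSD}), the reduced closed diagrams of $f$ and of $g$ are then similar, as desired. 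The first, routine, observation is that the composite $\mathcal{X}$-strand diagram $h \circ f \circ h^{-1}$ reduces to the reduced diagram for $g$ by finitely many Type 1 and Type 2 reductions, none of which involves the sources or the sinks; upon closing up these become reductions of closed diagrams that never cross the base line, so $\llbracket g \rrbracket$ is equivalent to $\llbracket h \circ f \circ h^{-1} \rrbracket$.

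The crux is a ``cyclic invariance of the closure'': I would show that $\llbracket h \circ f \circ h^{-1} \rrbracket$ is similar, up to reductions, to $\llbracket f \rrbracket$. In $\llbracket h \circ f \circ h^{-1} \rrbracket$ the base line sits at the seam joining the sources of the $h^{-1}$-block to the sinks of the $h$-block, and following the strands from there one traverses $h^{-1}$, then $f$, then $h$. By \cref{SUB groupoid generators} the diagram $h^{-1}$ is a finite composition of split, merge and permutation diagrams, so I would transport the base line across the whole $h^{-1}$-block by a finite sequence of shifts (one per split or merge it contains, as in \cref{fig_CSD_shift,fig_CSD_shift_ex}) interspersed with permutations of the base line, bringing it to the position between $f$ and $h^{-1}$. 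This transport is a composition of similarities, so it changes neither the similarity class nor the equivalence class. Now the $h^{-1}$-block and the $h$-block are adjacent along an interface disjoint from the base line, and their composite reduces to the identity $\mathcal{X}$-strand diagram on $X_0$ by Type 1 and Type 2 reductions (the standard computation showing that composing with the inverse yields the identity); since this interface is not on the base line, these are legitimate closed-diagram reductions, and performing them leaves exactly $\llbracket f \rrbracket$. Hence $\llbracket g \rrbracket$ is equivalent to $\llbracket f \rrbracket$ modulo similarity, and \cref{LEM reduced CSD} completes the proof.

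The hardest part will be the bookkeeping in the transport of the base line: one must check that each elementary move really is a shift or a permutation, and in particular keep track of the new symbols a shift has to generate when it expands a split or a merge (the phenomenon illustrated by the letters $v,w$ in \cref{fig_CSD_shift_ex}), ensuring that the labels produced this way are compatible with the renaming used to form the composite $h \circ f \circ h^{-1}$, so that the cancellation of the $h^{-1}$- and $h$-blocks genuinely collapses to straight strands. A cleaner organisation — which I expect matches the later treatment of \cref{PROP conjugator} — is to first verify that conjugating a rearrangement by a single generator of the replacement groupoid (a permutation, split, or merge diagram) corresponds on the closed diagram to a permutation, an expanding shift, or a reducing shift of the base line (each followed by a reduction when the relevant split or merge can be reduced), and then to compose these over a generator decomposition $h = s_1 \circ \cdots \circ s_m$; this localizes the renaming bookkeeping to one generator at a time and makes the argument essentially formal.
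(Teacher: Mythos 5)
Your proposal is correct and follows essentially the same route as the paper: close the composite diagram for the conjugated element, transport the base line by shifts (and permutations) across one conjugator block so that the $h$- and $h^{-1}$-blocks become adjacent and cancel by Type 1 and 2 reductions, leaving the closure of the other element, and then invoke \cref{LEM reduced CSD} for uniqueness of the reduced similarity class. Your extra care with the generator decomposition of $h$ and the label bookkeeping is just a more explicit organisation of the shifts that the paper performs in one stroke.
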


\begin{proof}
Suppose that $f = g^h$, for $f$, $g$ and $h$ in $G_\mathcal{X}$.
Then you can represent the strand diagram for $f$ as shown in \cref{fig_conj}.
This is usually not a reduced strand diagram, but we do not need it to be.
If we now close the diagram, we can perform shifts to move the base line counterclockwise until it is right between $h$ and $g$, as represented in green in \cref{fig_conj}.
In this configuration, $h$ and $h^{-1}$ cancel out completely, leaving the same closed strand diagram that one would obtain when closing $g$.
Thus the closures of $f$ and $g$ are equivalent to a common closed strand diagram, and so because of \cref{LEM reduced CSD} they must have the same reduced closed strand diagram, up to similarity.
\end{proof}

\begin{figure}\centering
\begin{tikzpicture}[scale=.4]
    \useasboundingbox (-6,-26) rectangle (12.25,4.5);
    %
    \begin{scope}[draw=Orange]
    \draw (-1.75,-.5) node{\textcolor{Orange}{$\cdots$}};
    \draw (1.75,-.5) node{\textcolor{Orange}{$\cdots$}};
    \draw (-3.5,0) -- (-3.5,-2);
    \draw (-3.5,-5) -- (-3.5,-7);
    \draw[dotted] (-4,-2.5) to[out=90,in=180] (-3.5,-2) to[out=0,in=90] (-3,-2.5);
    \draw[dotted] (-4,-4.5) to[out=270,in=180] (-3.5,-5) to[out=0,in=270] (-3,-4.5);
    \draw (3.5,0) -- (3.5,-2);
    \draw (3.5,-5) -- (3.5,-7);
    \draw[dotted] (4,-2.5) to[out=90,in=0] (3.5,-2) to[out=180,in=90] (3,-2.5);
    \draw[dotted] (4,-4.5) to[out=270,in=0] (3.5,-5) to[out=180,in=270] (3,-4.5);
    \draw (0,0) -- (0,-1);
    \draw[dotted] (0,-1) -- (0,-2);
    \draw (0,-1) to[out=180,in=90] (-1.5,-2);
    \draw (0,-1) to[out=0,in=90] (1.5,-2);
    \draw[dotted] (-2,-2.5) to[out=90,in=180] (-1.5,-2) to[out=0,in=90] (-1,-2.5);
    \draw[dotted] (2,-2.5) to[out=90,in=0] (1.5,-2) to[out=180,in=90] (1,-2.5);
    \draw (-5.3,-3.5) node{\textcolor{Orange}{\Large$h^{-1}$}};
    \draw (0,-3.5) node{\textcolor{Orange}{\LARGE$\cdots$}};
    \draw (0,-7) -- (0,-6);
    \draw[dotted] (0,-6) -- (0,-5);
    \draw (0,-6) to[out=0,in=270] (1.5,-5);
    \draw (0,-6) to[out=180,in=270] (-1.5,-5);
    \draw[dotted] (-2,-4.5) to[out=270,in=180] (-1.5,-5) to[out=0,in=270] (-1,-4.5);
    \draw[dotted] (2,-4.5) to[out=270,in=0] (1.5,-5) to[out=180,in=270] (1,-4.5);
    \end{scope}
    \begin{scope}[draw=Plum,yshift=-7cm]
    \draw (-3.5,0) -- (-3.5,-2);
    \draw (-3.5,-5) -- (-3.5,-7);
    \draw[dotted] (-4,-2.5) to[out=90,in=180] (-3.5,-2) to[out=0,in=90] (-3,-2.5);
    \draw[dotted] (-4,-4.5) to[out=270,in=180] (-3.5,-5) to[out=0,in=270] (-3,-4.5);
    \draw (3.5,0) -- (3.5,-2);
    \draw (3.5,-5) -- (3.5,-7);
    \draw[dotted] (4,-2.5) to[out=90,in=0] (3.5,-2) to[out=180,in=90] (3,-2.5);
    \draw[dotted] (4,-4.5) to[out=270,in=0] (3.5,-5) to[out=180,in=270] (3,-4.5);
    \draw (0,0) -- (0,-1);
    \draw[dotted] (0,-1) -- (0,-2);
    \draw (0,-1) to[out=180,in=90] (-1.5,-2);
    \draw (0,-1) to[out=0,in=90] (1.5,-2);
    \draw[dotted] (-2,-2.5) to[out=90,in=180] (-1.5,-2) to[out=0,in=90] (-1,-2.5);
    \draw[dotted] (2,-2.5) to[out=90,in=0] (1.5,-2) to[out=180,in=90] (1,-2.5);
    \draw (-5.3,-3.5) node{\textcolor{Plum}{\Large$g$}};
    \draw (0,-3.5) node{\textcolor{Plum}{\LARGE$\cdots$}};
    \draw (0,-7) -- (0,-6);
    \draw[dotted] (0,-6) -- (0,-5);
    \draw (0,-6) to[out=0,in=270] (1.5,-5);
    \draw (0,-6) to[out=180,in=270] (-1.5,-5);
    \draw[dotted] (-2,-4.5) to[out=270,in=180] (-1.5,-5) to[out=0,in=270] (-1,-4.5);
    \draw[dotted] (2,-4.5) to[out=270,in=0] (1.5,-5) to[out=180,in=270] (1,-4.5);
    \end{scope}
    \begin{scope}[draw=Orange,yshift=-14cm]
    \draw (-3.5,0) -- (-3.5,-2);
    \draw (-3.5,-5) -- (-3.5,-7);
    \draw[dotted] (-4,-2.5) to[out=90,in=180] (-3.5,-2) to[out=0,in=90] (-3,-2.5);
    \draw[dotted] (-4,-4.5) to[out=270,in=180] (-3.5,-5) to[out=0,in=270] (-3,-4.5);
    \draw (3.5,0) -- (3.5,-2);
    \draw (3.5,-5) -- (3.5,-7);
    \draw[dotted] (4,-2.5) to[out=90,in=0] (3.5,-2) to[out=180,in=90] (3,-2.5);
    \draw[dotted] (4,-4.5) to[out=270,in=0] (3.5,-5) to[out=180,in=270] (3,-4.5);
    \draw (0,0) -- (0,-1);
    \draw[dotted] (0,-1) -- (0,-2);
    \draw (0,-1) to[out=180,in=90] (-1.5,-2);
    \draw (0,-1) to[out=0,in=90] (1.5,-2);
    \draw[dotted] (-2,-2.5) to[out=90,in=180] (-1.5,-2) to[out=0,in=90] (-1,-2.5);
    \draw[dotted] (2,-2.5) to[out=90,in=0] (1.5,-2) to[out=180,in=90] (1,-2.5);
    \draw (-5.3,-3.5) node{\textcolor{Orange}{\Large$h$}};
    \draw (0,-3.5) node{\textcolor{Orange}{\LARGE$\cdots$}};
    \draw (0,-7) -- (0,-6);
    \draw[dotted] (0,-6) -- (0,-5);
    \draw (0,-6) to[out=0,in=270] (1.5,-5);
    \draw (0,-6) to[out=180,in=270] (-1.5,-5);
    \draw[dotted] (-2,-4.5) to[out=270,in=180] (-1.5,-5) to[out=0,in=270] (-1,-4.5);
    \draw[dotted] (2,-4.5) to[out=270,in=0] (1.5,-5) to[out=180,in=270] (1,-4.5);
    \draw (-1.75,-6.5) node{\textcolor{Orange}{$\cdots$}};
    \draw (1.75,-6.5) node{\textcolor{Orange}{$\cdots$}};
    \end{scope}
    \draw (-3.5,-21) to[out=270,in=270,looseness=1] (12.25,-22) -- (12.25,0) to[out=90,in=90,looseness=1] (-3.5,0);
    \draw (0,-21) to[out=270,in=270,looseness=1] (10.5,-21.5) -- (10.5,0) to[out=90,in=90,looseness=1] (0,0);
    \draw (3.5,-21) to[out=270,in=270,looseness=1] (8.75,-21) -- (8.75,0) to[out=90,in=90,looseness=1] (3.5,0);
    \draw[dashed] (7,-10.5) to[out=90,in=0,looseness=.75] (3.5,0) -- (-5,0);
    \draw[dashed,ForestGreen] (7,-10.5) to[out=90,in=0,looseness=1] (3.5,-7) -- (-5,-7);
    \draw[fill=gray] (7,-10.5) circle (.25);
\end{tikzpicture}
\caption{A closed strand diagram obtained from $f = g^h$ and, in \textcolor{ForestGreen}{green}, a different position of the base line.}
\label{fig_conj}
\end{figure}
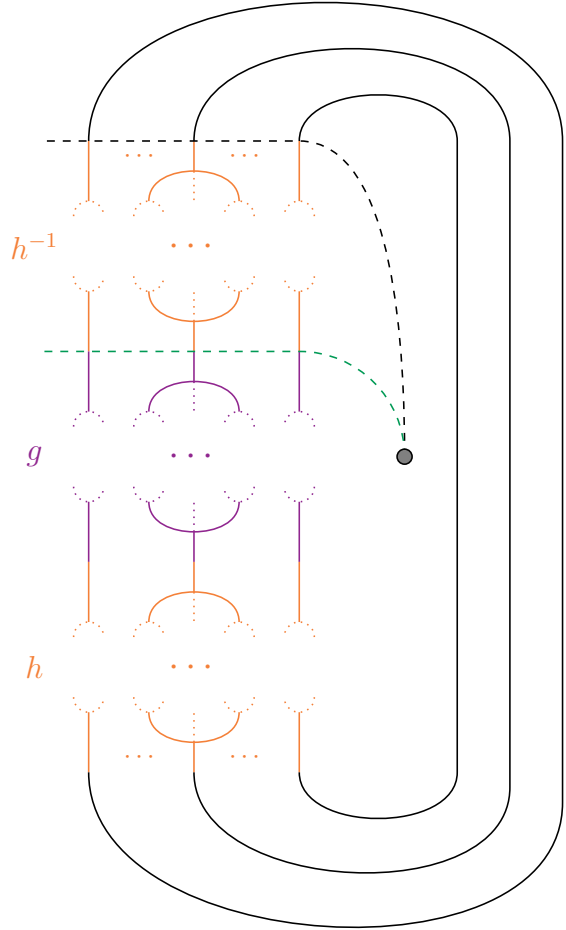

The next result is the bridge that links reductions to conjugacy.
Note that it does not require the hypothesis of reduction-confluence. 

\begin{proposition}
\label{PROP conjugator}
Given $f, g \in G_\mathcal{X}$, if up to similarity the closures of any of their strand diagrams can be rewritten as a common reduced closed diagram using reductions and inverses of reductions, then $f$ and $g$ are conjugate in $G_\mathcal{X}$.
\end{proposition}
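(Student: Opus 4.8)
The plan is to reverse the reduction process of the previous Proposition: there we showed that conjugate elements have a common reduced closed diagram (up to similarity), and now we must establish the converse, that a common reduced closed diagram forces conjugacy. The key observation is that each of the three transformations on closed diagrams — Type 1 and 2 reductions, Type 3 reductions, permutations, and shifts — corresponds to conjugating the underlying rearrangement by an element of the rearrangement group $G_\mathcal{X}$; so if the closures of strand diagrams for $f$ and for $g$ are joined by a finite chain of such moves and their inverses, composing the corresponding conjugators produces a single $h \in G_\mathcal{X}$ with $f = g^h$.

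First I would make precise the dictionary between transformations and conjugations. Given a closed diagram $\zeta$ for a rearrangement $r$, a transformation $\zeta \rightsquigarrow \zeta'$ is realized by ``cutting'' $\zeta$ and $\zeta'$ at their base lines (using \cref{LEM cut SDs} in the closed setting) to obtain $\mathcal{X}$-strand diagrams $Z$ and $Z'$ with $\llbracket Z \rrbracket = \zeta$, $\llbracket Z' \rrbracket = \zeta'$. For a permutation of the base points, $Z' = P^{-1} \circ Z \circ P$ where $P$ is the corresponding permutation diagram; for a shift across a split (merge), $Z' = S^{-1} \circ Z \circ S$ (respectively $M^{-1} \circ Z \circ M$) where $S$ (respectively $M$) is the split (merge) diagram that the base line crossed — this is exactly the content flagged at \cpageref{TXT shifts} and in the statement following \cref{PROP conjugator} about shifts representing conjugations by split and merge diagrams. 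For a Type 1 or 2 reduction, $Z$ and $Z'$ are equivalent as $R$-branching strand diagrams, hence represent the same rearrangement, so the conjugator is trivial. For a Type 3 reduction, the underlying rearrangement is unchanged — it only ``anti-expands'' the base graph — but the base graph itself changes, so one must check that both $Z$ and $Z'$ are genuine $\mathcal{X}$-strand diagrams; this is where some care is needed, since a single Type 3 reduction can leave an intermediate diagram that is not over the base graph $X_0$. In each of these cases the conjugator produced is a split, merge, or permutation diagram over the appropriate base graph; by the discussion at \cpageref{TXT X-SDs}, when the base graphs of source and sink match $X_0$ these are honest elements of $G_\mathcal{X}$.

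The main obstacle, which I would address head-on, is that an individual step in the chain of reductions and inverse reductions need not keep the diagram a closed $\mathcal{X}$-strand diagram: the base graph can drift away from $X_0$ mid-chain (under Type 3 reductions and expanding shifts), so the intermediate ``conjugators'' live in the replacement groupoid rather than in $G_\mathcal{X}$. The fix is to work in the groupoid throughout: each transformation $\zeta_i \rightsquigarrow \zeta_{i+1}$ yields an element $h_i$ of the replacement groupoid associated to $R$ with $Z_{i+1} = h_i^{-1} \circ Z_i \circ h_i$ (where the $Z_i$ are strand diagrams for the successive closed diagrams, obtained by cutting), and composability of the $h_i$ is guaranteed because consecutive diagrams share a base graph. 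Setting $h = h_1 \circ h_2 \circ \cdots \circ h_m$, the source of $h$ is the base graph of $\zeta$ (a copy of $X_0$) and its sink is the base graph of the final $\zeta'$ (also a copy of $X_0$, since $f, g \in G_\mathcal{X}$), so $h \in G_\mathcal{X}$; and by telescoping, $Z' = h^{-1} \circ Z \circ h$, i.e. $g = f^{\,h}$ (or $f = g^{h^{-1}}$, after swapping which of $f,g$ one started from). One then notes that passing to the common reduced closed diagram and back is exactly such a chain — the hypothesis gives reductions and inverse reductions joining the two closures up to similarity, and similarities are themselves shifts and permutations — so this completes the argument.
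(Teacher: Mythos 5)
Your proposal is correct and follows essentially the same route as the paper: you translate each permutation, shift, and Type 3 reduction into conjugation of the opened diagrams by a permutation, split/merge, or split diagram in the replacement groupoid (Type 1 and 2 reductions giving trivial conjugators), compose these conjugators along the chain leading to the common reduced closed diagram, and conclude that the composite lies in $G_\mathcal{X}$ because its source and sink both represent the base graph $X_0$, exactly as in the paper's proof. The only slight imprecision is calling the rearrangement ``unchanged'' under a Type 3 reduction --- the opened diagrams are in fact conjugate by as many split diagrams as the winding number, which is how the paper states it --- but since you then treat every step uniformly as conjugation by a groupoid element, the argument stands.
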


\begin{proof}
First, observe that it does not matter which strand diagrams we choose to represent $f$ and $g$.
Indeed, two distinct strand diagrams for the same elements would only differ by a sequence of Type 1 and Type 2 reductions, which can also be performed on the closure of such diagrams.

We now note that, if two closed strand diagrams $\eta$ and $\zeta$ differ by a transformation, then the respective ``open'' strand diagrams $o(\eta)$ and $o(\zeta)$ are conjugate by a specific element of the replacement groupoid which depends on the transformation, as explained in the points below.

\begin{itemize}
\phantomsection\label{TXT transformations to conjugacy}
    \item If the transformation is a shift, then the open strand diagrams $o(\eta)$ and $o(\zeta)$ differ from conjugating by a split or a merge diagram.
    \item If the transformation is a permutation, then the open strand diagrams $o(\eta)$ and $o(\zeta)$ differ from conjugating by a permutation diagram.
    \item If the transformation is a Type 1 or 2 reduction, then the open strand diagrams $o(\eta)$ and $o(\zeta)$ represent the same rearrangement.
    \item If the transformation is a Type 3 reduction, then the open strand diagrams $o(\eta)$ and $o(\zeta)$ differ from conjugating by an amount of split diagrams equal to the winding number of the loops involved in the reduction.
\end{itemize}

Suppose that two closed diagrams $\eta$ and $\zeta$ obtained from $f$ and $g$ have the same reduced closed diagram $\rho$.
Then there exist sequences of transformations from $\eta$ to $\rho$ and from $\zeta$ to $\rho$.
Since each transformation corresponds to a (possibly trivial) conjugation of the open diagrams, it follows that both $o(\eta)^{h_\eta} = o(\rho)$ and $o(\zeta)^{h_\eta} = o(\rho)$, where $h_\eta$  and $h_\zeta$ are compositions of split diagrams, merge diagrams and permutation diagrams, which are elements of the replacement groupoid.
Thus, $o(\eta) = o(\zeta)^{h_\zeta \circ h_\eta^{-1}}$.

Finally, we need to show that $h_\zeta \circ h_\eta^{-1}$ is a strand diagram that represents an element of $G_\mathcal{X}$.
The composition $o(\zeta)^{h_\zeta \circ h_\eta}$ exists and results in $o(\eta)$, so $h_\zeta \circ h_\eta^{-1}$ is an element of the replacement groupoid whose source and sink graphs represent the base graph of $\mathcal{X}$, i.e., $h_\zeta \circ h_\eta^{-1}$ is an $X$-strand diagram (as defined at \cpageref{TXT X-SDs}).
As discussed at \cpageref{TXT X-SDs}, these are exactly the elements of $\mathcal{X}$, so we are done.
\end{proof}

Together these two Propositions yield the following result.

\begin{proposition}
\label{PROP conjugate}
Suppose that $\mathcal{X} = (X_0, R, \mathrm{C})$ is an expanding replacement system whose set of replacement rules $(R, \mathrm{C})$ is reduction-confluent.
Two elements $f, g \in G_\mathcal{X}$ are conjugate in $G_\mathcal{X}$ if and only if their reduced closed diagrams are similar.
\end{proposition}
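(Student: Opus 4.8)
The plan is to derive this immediately by combining the two Propositions of this subsection with the uniqueness statement \cref{LEM reduced CSD}. The forward implication — if $f$ and $g$ are conjugate in $G_\mathcal{X}$ then their reduced closed diagrams are similar — is exactly the content of the first Proposition above, so nothing new is needed there; note that this is precisely where reduction-confluence enters, through \cref{LEM reduced CSD}.

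For the converse, suppose the reduced closed diagrams of $f$ and $g$ are similar. First I would pick arbitrary strand diagrams representing $f$ and $g$ and form their closures $\eta_f$ and $\eta_g$; by the opening remark in the proof of \cref{PROP conjugator}, the choice of strand diagram is immaterial, since two strand diagrams for the same element differ by Type 1 and Type 2 reductions, which descend to the closures. Next, since $(R,\mathrm{C})$ is reduction-confluent (and the replacement system is expanding, so the reduction system on closed diagrams is terminating), \cref{LEM reduced CSD} gives that $\llbracket \eta_f \rrbracket$ and $\llbracket \eta_g \rrbracket$ are equivalent — via a sequence of reductions and inverse reductions, up to similarity — to their respective reduced similarity classes $\llbracket \rho_f \rrbracket$ and $\llbracket \rho_g \rrbracket$. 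By hypothesis $\rho_f$ and $\rho_g$ are similar, so both $\eta_f$ and $\eta_g$ can be rewritten, up to similarity and using reductions and their inverses, to the single common reduced closed diagram $\rho_f$. Applying \cref{PROP conjugator} then yields that $f$ and $g$ are conjugate in $G_\mathcal{X}$, completing the proof.

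I do not expect a genuine obstacle: the statement is a formal consequence of the machinery already assembled. The one point that needs care is the bookkeeping among the several equivalences in play — equality up to renaming of symbols, similarity (shifts and permutations of the base line), and equivalence under reductions — and in particular checking that the notion of ``equivalent'' used in \cref{LEM reduced CSD}, namely connectedness in the reduction graph (hence reductions together with inverse reductions), is exactly the one consumed by \cref{PROP conjugator}, with similarity serving as the bridge between the two reduced representatives $\rho_f$ and $\rho_g$. It is also worth recording explicitly that reduction-confluence is used only to secure uniqueness of the reduced closed diagram: Type 1 and Type 2 reductions are confluent unconditionally, so the failure of the statement for non-reduction-confluent systems (as for the Airplane, cf.\ \cref{fig_TA_reduced}) is attributable precisely to Type 3 reductions, which is what the adapted treatment in \cref{SUB non conf} will have to address.
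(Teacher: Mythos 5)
Your proposal is correct and is essentially the paper's own argument: the paper proves this Proposition simply by combining the two preceding results, exactly as you do — the forward direction is the first (unnamed) Proposition, and the converse follows by passing from the similarity of the reduced diagrams (unique up to similarity by \cref{LEM reduced CSD}) to a common reduced closed diagram and invoking \cref{PROP conjugator}. Your additional bookkeeping about which notion of equivalence is consumed where, and the remark that reduction-confluence enters only through uniqueness of the reduced similarity class, is accurate and consistent with the paper.
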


\subsection{The Algorithm for Reduction-confluent Replacement Rules}
\label{SUB algorithm}

Using the results from the last Subsection, under the assumption of reduction-confluence we can algorithmically check whether two rearrangements $f$ and $g$ are conjugate in the rearrangement group:
it suffices to consider the closed strand diagrams $\eta$ and $\zeta$ for $f$ and $g$, respectively, compute their reduced diagrams $\eta^*$ and $\zeta^*$ and finally check whether they are the same up to similarity.
In more detail, the algorithm is the following.

\begin{enumerate}
    \setcounter{enumi}{-1}
    \item Consider the closed strand diagrams $\eta$ and $\zeta$ obtained from $f$ and $g$.
    \item Compute their reduced closed diagrams $\eta^*$ and $\zeta^*$ in the following way.
    \begin{itemize}
        \item Check whether the diagram contains a Type 1 or 2 reduction up to permutations and shifts (which can be done by ``forgetting'' about the base line and the base points). If it does, perform the reduction, possibly performing a permutation of the base points or moving the base line with a shift if necessary to ``unlock'' the reduction. Continue doing this until no Type 1 and 2 reductions are possible. This procedure must eventually stop, since Type 1 and 2 reductions decrease the amount of strands.
        \item Check whether the diagram contains a Type 3 reduction, up to permutations. If it does, perform the reduction, possibly performing a permutation of the base points if necessary to ``unlock'' the reduction.
    \end{itemize}
    \item In order to check similarity, consider the graphs obtained by ``forgetting'' about the labels, the base line and the base points of the two diagrams and proceed as follows.
    \begin{itemize}
        \item Check whether the graphs are isomorphic (this terminates because the graphs are finite). If they are not isomorphic, then the strand diagrams cannot be similar. Otherwise, execute the next steps multiple times, separately identifying the two graphs under each isomorphism.
        \item Check whether the two positions of the base lines are the same up to similarity.
        This is equivalent to asking whether they describe cohomologous cocycles by \cref{RMK cohomology}, so it can be efficiently computed.
        Explicit examples of these computations for Thompson's group $V$ are contained in the dissertation \cite{Algorithm}, whose content can be adapted to suit the setting of rearrangement groups. If the answer is negative, then the diagrams are not similar.
        \item Once established that the positions of the base lines are the same up to similarity, it only remains to check that labels are the same up to renaming. To do so, find the same position of the base line by computing, step by step and in parallel, all possible similarities of the second diagram $\zeta^*$ until the position of the base line matches the one in $\eta^*$ (we know that this process terminates because in the previous step we have established that the positions of the base lines are the same up to similarity).
        Once the same position of the base lines has been found, check whether labels are the same up to renaming and up to the finitely many permutations of stable symbols (see \cref{COR labels}).
    \end{itemize}
\end{enumerate}

Note that the order in which the reductions are performed in step (1) does not matter, as there is a unique reduced similarity class thanks to \cref{LEM reduced CSD}

Then we have the following:

\begin{theorem}
\label{THM conjugate}
Suppose that $\mathcal{X} = (X_0, R, \mathrm{C})$ is an expanding replacement system whose set of replacement rules $(R, \mathrm{C})$ is reduction-confluent.
Then the conjugacy problem of $G_\mathcal{X}$ is solvable.
\end{theorem}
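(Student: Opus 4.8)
The plan is to combine \cref{PROP conjugate} with the algorithm sketched in \cref{SUB algorithm}, and then to verify carefully that every step of that algorithm is effective and terminating. By \cref{PROP conjugate}, two elements $f, g \in G_\mathcal{X}$ are conjugate in $G_\mathcal{X}$ if and only if their reduced closed diagrams are similar; so it suffices to exhibit an algorithm that, given $f$ and $g$ (say presented as reduced forest pair diagrams, equivalently as reduced $\mathcal{X}$-strand diagrams), decides whether their reduced closed diagrams are similar.

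First I would pass from $f$ and $g$ to closed diagrams $\eta$ and $\zeta$ by gluing the associated $\mathcal{X}$-strand diagrams around their holes as in \cref{SUB closed strand diagrams}, and then compute reduced closed diagrams $\eta^*$ and $\zeta^*$. This computation is effective: Type 1 and Type 2 reductions strictly decrease the number of strands, and every reduction strictly decreases the number of splits and merges (as used in the termination part of \cref{LEM reduced CSD}), so only finitely many reductions can be applied in total; at each stage there are only finitely many candidate reductions to test, possibly after one of the finitely many permutations of the base points or a single shift of the base line to ``unlock'' a reduction. Crucially, by \cref{LEM reduced CSD} — which is precisely where reduction-confluence is used — the similarity class of the outcome does not depend on the order of reductions nor on the auxiliary permutations and shifts, so $\eta^*$ and $\zeta^*$ are well defined up to similarity.

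Next I would argue that similarity of two reduced closed diagrams is decidable, and this is the delicate point, since a similarity class is infinite (one may keep shifting the base line and permuting base points). The idea is that similarity is captured by a triple of finite invariants. Forgetting the base line, the base points and the labels turns a closed diagram into a finite directed graph with a rotation system; similar diagrams produce isomorphic such graphs, so one enumerates the finitely many isomorphisms between the underlying graphs of $\eta^*$ and $\zeta^*$, treating each in turn (if none exists, the diagrams are not similar). Having fixed an isomorphism, the base line of each diagram is a cocycle in the graph cohomology of the underlying graph (\cref{RMK cohomology}), and the two diagrams agree up to similarity, ignoring labels, exactly when the cocycles are cohomologous — a finite linear-algebra check. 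Finally, after bringing the base lines into coincidence, it remains to compare the labelings; these agree up to similarity precisely when they agree up to renaming symbols and up to one of the finitely many permutations of the stable symbols produced by cycling the base line through each connected component, and \cref{COR labels} supplies an algorithm listing exactly these configurations, so this last comparison is a bounded search.

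The main obstacle is exactly this finiteness of the similarity test: one cannot naively enumerate all closed diagrams similar to $\zeta^*$ and look for $\eta^*$, so the argument must route through the finite data above — the underlying graph with rotation system, the cohomology class of the base-line cocycle, and the finite orbit of label configurations from \cref{COR labels} — and show each comparison is effective. The preceding sections are arranged so that this reduction goes through cleanly. Assembling these observations, the algorithm of \cref{SUB algorithm} terminates and correctly decides whether the reduced closed diagrams of $f$ and $g$ are similar, hence by \cref{PROP conjugate} whether $f$ and $g$ are conjugate, which proves the theorem.
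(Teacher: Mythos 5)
Your proposal is correct and follows essentially the same route as the paper: it combines \cref{PROP conjugate} with the reduction algorithm (whose well-definedness up to similarity rests on \cref{LEM reduced CSD}, the only place reduction-confluence enters) and decides similarity through the same finite invariants the paper uses, namely the underlying graph up to isomorphism, the cohomology class of the base-line cocycle from \cref{RMK cohomology}, and the finitely many stable-symbol configurations of \cref{COR labels}. No gaps beyond the paper's own level of detail.
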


Using this method, not only we can say whether two given rearrangements are conjugate, but when they are we can also explicitly compute a conjugating element, as seen in the proof of \cref{PROP conjugator}.
Thus, we also have:

\begin{corollary}
Suppose that $\mathcal{X} = (X_0, R, \mathrm{C})$ is an expanding replacement system whose set of replacement rules $(R, \mathrm{C})$ is reduction-confluent.
Then the conjugacy search problem of $G_\mathcal{X}$ is solvable.
\end{corollary}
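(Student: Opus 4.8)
The plan is to observe that the algorithm underlying \cref{THM conjugate} already carries, as a byproduct, all the data needed to \emph{build} a conjugator, so the conjugacy search problem reduces to bookkeeping the transformations it performs. First I would run that algorithm on $f$ and $g$: fix strand diagrams $F$ and $G$ representing them, form their closures $\eta = \llbracket F \rrbracket$ and $\zeta = \llbracket G \rrbracket$, and reduce them to $\eta^*$ and $\zeta^*$ using Type 1, 2 and 3 reductions together with the auxiliary permutations and shifts. By \cref{LEM reduced CSD} and \cref{PROP conjugate}, $f$ and $g$ are conjugate in $G_\mathcal{X}$ if and only if $\eta^*$ and $\zeta^*$ are similar; if they are not, there is nothing to output and we stop.

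If $\eta^*$ and $\zeta^*$ are similar, I would record the explicit finite sequences of transformations witnessing this: the reductions (with the auxiliary permutations and shifts used to ``unlock'' them) from $\eta$ to $\eta^*$ and from $\zeta$ to $\zeta^*$, and the similarity — a finite sequence of permutations and shifts — taking $\zeta^*$ to $\eta^*$. All of these are produced effectively by the algorithm: the base-line comparison is the cohomology test of \cref{RMK cohomology}, and the finitely many admissible relabelings of stable symbols that must be tried are enumerated by \cref{COR labels}. Concatenating, I obtain a finite sequence of transformations from $\eta$ to a common reduced closed diagram $\rho$, and another from $\zeta$ to the same $\rho$.

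Next I would invoke the dictionary established inside the proof of \cref{PROP conjugator} (the bulleted list at \cpageref{TXT transformations to conjugacy}): a shift corresponds to conjugating the open diagram by a split or a merge diagram, a permutation corresponds to conjugating by a permutation diagram, a Type 3 reduction of winding number $n$ corresponds to conjugating by $n$ split diagrams, and Type 1 or 2 reductions change the open diagram not at all. Composing the corresponding groupoid elements along the two recorded sequences yields explicit $R$-branching strand diagrams $h_\eta$ and $h_\zeta$ with $o(\eta)^{h_\eta} = o(\rho) = o(\zeta)^{h_\zeta}$, whence $f = g^{\,h_\zeta \circ h_\eta^{-1}}$; the composite $h_\zeta \circ h_\eta^{-1}$ is defined precisely because all the source and sink graphs involved represent the base graph $X_0$, so it is an $\mathcal{X}$-strand diagram and hence an element of $G_\mathcal{X}$. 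To return the conjugator as an actual rearrangement rather than as a strand diagram, reduce $h_\zeta \circ h_\eta^{-1}$ and cut it via \cref{LEM cut SDs} into a (reduced) forest pair diagram, then read off the associated homeomorphism of the limit space.

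The main obstacle is the careful tracking of symbol renamings: each expanding shift introduces fresh symbols below a split or above a merge, and composing strand diagrams (\cref{SUB SDs composition}) requires renaming one factor so that the gluing stays $R$-branching. One must check that these renamings can be propagated consistently through the entire concatenated sequence of transformations, so that the composite $h_\zeta \circ h_\eta^{-1}$ is genuinely well-defined and genuinely an $\mathcal{X}$-strand diagram — this is routine but deserves attention. Everything else, namely finiteness of the search over graph isomorphisms in the similarity test, termination of the reduction process, and effectiveness of the cohomology computation and of the stable-symbol enumeration of \cref{COR labels}, is already guaranteed by the results cited above.
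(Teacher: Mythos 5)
Your proposal is correct and follows essentially the same route as the paper: the paper derives this corollary directly from the observation that the algorithm of \cref{THM conjugate}, combined with the transformation-to-conjugator dictionary in the proof of \cref{PROP conjugator}, lets one explicitly assemble a conjugating element from the recorded permutations, shifts and reductions. Your additional remarks on tracking renamings and cutting the resulting $\mathcal{X}$-strand diagram into a forest pair diagram are just a more detailed spelling-out of what the paper leaves implicit.
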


As a consequence of this Theorem, since the Higman-Thompson groups, the Houghton groups, the groups $QV$, $\tilde{Q}V$, $QT$ and $QF$, the Basilica, the Rabbit, the Vicsek and the Bubble Bath replacement rules are reduction-confluent (as noted at \cpageref{TXT Basilica Vicsek BubbleBath Houghton QV}), the conjugacy problem and the conjugacy search problem in their rearrangement groups can be solved using closed strand diagrams.

\begin{remark}
    This algorithm is arguably not the most efficient that it can be.
    For example, an algorithm that checks whether the two base lines describe cohomologous cycles could also provide an explicit sequence of shifts that bring the first base line to the second, allowing to partially skip the subsequent step of the algorithm.
    There might be more places where the algorithm may be optimized, which might be of interest for possible applications to group-based cryptography (see for example Section 3 of \cite{crypto}), but a full analysis of the efficiency of this method is beyond the scope of this paper.
\end{remark}

\subsection{Non Reduction-confluent Replacement Rules}
\label{SUB non conf}

\cref{THM conjugate} shows that reduction-confluence of the replacement system (\cref{DEF red-conf}) is a sufficient condition for solving the conjugacy problem with closed strand diagrams.
However, we can still adapt this method to solve the conjugacy problem in certain rearrangement groups that do not have this property.

For example, at \cpageref{TXT airplane non confluent} we showed that the Airplane replacement system $\mathcal{A}$ is not reduction-confluent.
However, this issue is only caused by the fact that, whenever we have a subgraph that is isomorphic to the base graph of $\mathcal{A}$ and that features at least one vertex of degree 1, this subgraph can be reduced in two different manners, as shown in \cref{fig_TA_reduced}.
It is intuitive that such a small issue can be overcome, and indeed one can modify the reduction system (defined in \cref{SUB reduction systems}) by adding the reduction described in \cref{fig_Airplane_graph_reduction}.
This additional rule means that a subgraph isomorphic to the one on the left in the figure, if no edge is adjacent to the vertex x, can be replaced with the one on the right, maintaining the adjacencies of the vertex v.
This can be stated more precisely and more generally using DPO graph rewriting systems, as will be discussed in the following \cref{SUB DPO}, but the idea should be clear without having to introduce that machinery.

It is not hard to prove that this newly built reduction system (comprised of the two original reductions alongside the new one) is locally confluent and terminating, thus by the usual argument we find that each connected component has a unique reduced graph.
The reason why this translates to conjugacy is that the newly added reduction of graphs is precisely the same as a red expansion followed by a blue reduction, thus we must allow the new special kind of Type 3 reduction that, from looping strands as in the bottom left of \cref{fig_TA_reduced}, produces a looping strand as in the bottom right of that same figure.
Then one can prove an analogous to \cref{PROP conjugator} by adding the following rule to the list at \cpageref{TXT transformations to conjugacy}:
\begin{itemize}
    \item If the transformation is a Type 3 reduction of the newly added kind, then the open strand diagrams differ from conjugation by the diagram in \cref{fig_Airplane_conj} (a number of these diagrams equal to the winding number of the loops).
\end{itemize}
It is easy to check that the new rewriting system is confluent, by the same reasoning used to prove \cref{LEM reduced CSD} along with the fact that the reduction system comprised of the two original reductions alongside the new one is confluent.

\begin{figure}\centering
\begin{tikzpicture}[scale=1]
    \draw[blue] (0,0) node[black,circle,fill,inner sep=1.25]{} node[black,above]{v} -- (2,0) node[black,anchor=south east]{x};
    \draw[red,->-=.5] (2,0) to[out=270,in=270,looseness=1.7] (3,0) to[out=90,in=90,looseness=1.7] (2,0) node[black,circle,fill,inner sep=1.25]{};
    \draw[thick, -stealth] (3.7,0) -- (4.3,0);
    \draw[blue] (5,0) node[black,circle,fill,inner sep=1.25]{} node[black,above]{v} -- (7,0) node[black,circle,fill,inner sep=1.25]{} node[black,above]{w};
\end{tikzpicture}
\caption{The additional graph reduction that allows to solve the conjugacy problem in the Airplane rearrangement group.}
\label{fig_Airplane_graph_reduction}
\end{figure}
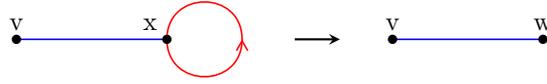

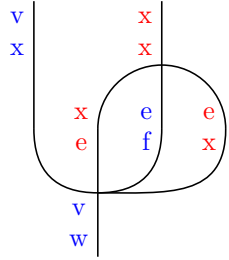
\begin{figure}\centering
\begin{tikzpicture}[scale=.85]
    \draw (0,0) -- node[left,blue,align=center,looseness=1.2]{v\\x} (0,-1) -- (0,-2);
    \draw (2,0) -- node[left,red,align=center,looseness=1.2]{x\\x} (2,-1);
    \draw (2,-1) to[out=180,in=90] (1,-2) node[left,red,align=center]{x\\e};
    \draw (2,-1) to (2,-2) node[left,blue,align=center]{e\\f};
    \draw (2,-1) to[out=0,in=90] (3,-2) node[left,red,align=center]{e\\x};
    \draw (0,-2) to[out=270,in=180,looseness=1.2] (1,-3);
    \draw (1,-2) to (1,-3);
    \draw (2,-2) to[out=270,in=0,looseness=1.2] (1,-3);
    \draw (3,-2) to[out=270,in=0,looseness=1.25] (1,-3);
    \draw (1,-3) -- node[left,blue,align=center]{v\\w} (1,-4);
\end{tikzpicture}
\caption{The conjugator corresponding to a Type 3 reduction associated with the graph reduction shown in \cref{fig_Airplane_graph_reduction}.}
\label{fig_Airplane_conj}
\end{figure}

Thus, collectively we have the following result.

\begin{theorem}\label{THM known groups}
Strand diagrams can be used to solve the conjugacy problem and the conjugacy search problem in the Higman-Thompson groups, the Airplane rearrangement group $T_A$, the Rabbit rearrangement groups (including the Basilica rearrangement group $T_B$) the Vicsek and Bubble Bath rearrangement groups, the Houghton groups $H_n$, the groups $QV$, $\tilde{Q}V$, $QT$, $QF$ and those topological full groups of (one-sided) subshifts of finite type whose replacement system is reduction-confluent.
\end{theorem}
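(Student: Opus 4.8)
The plan is to derive \cref{THM known groups} almost entirely from \cref{THM conjugate} and its Corollary, treating the Airplane rearrangement group $T_A$ as the single exception via the modification of \cref{SUB non conf}. For the Higman-Thompson groups, the Rabbit rearrangement groups (the Basilica among them), the Vicsek and Bubble Bath rearrangement groups, the Houghton groups $H_n$, and the groups $QV$, $\tilde{Q}V$, $QT$, $QF$, I would first recall the explicit expanding replacement systems displayed in \cref{SUB examples} (\cref{fig_replacement_interval,fig_replacement_circle,fig_replacement_cantor}, \cref{fig replacement B}, \cref{fig replacement Vic}, \cref{fig replacement BB}, \cref{fig Houghton}, \cref{fig QV}, together with their Higman-style variants obtained by changing the base graph) and then check that the associated replacement rules are reduction-confluent (\cref{DEF red-conf}). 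As sketched at \cpageref{TXT Basilica Vicsek BubbleBath Houghton QV}, this is a Newman's Diamond Lemma \cite{NewmanDiamond} argument: termination is immediate since every graph reduction strictly decreases the number of edges (this is where the expanding hypothesis enters), and local confluence reduces to the finite case analysis of overlaps between two distinct anti-expansions of a single graph. The Basilica case is carried out around \cref{fig_B_reduction_confluent}, and I would run the same analysis for the remaining systems: in the one-color systems there is only one replacement graph to overlap with itself, and in the systems for $H_n$, $QV$, $QT$, $QF$ the black replacement graph supports only trivial rearrangements, so genuine overlaps come only from the non-black colors and each produces a unique outcome. Once reduction-confluence is established, \cref{THM conjugate} solves the conjugacy problem and its Corollary the conjugacy search problem, with the conjugator produced explicitly since the proof of \cref{PROP conjugator} realizes it as a composition of split, merge and permutation diagrams. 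For the topological full groups of one-sided subshifts of finite type I would use the realization recalled at the end of \cref{SUB examples} (colors are the vertices of the defining graph, replacement graphs are disjoint unions of edges), restrict to those whose replacement system is reduction-confluent, and again invoke \cref{THM conjugate}.

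Next, for $T_A$: the Airplane replacement system $\mathcal{A}$ is expanding but, as shown at \cpageref{TXT airplane non confluent} and in \cref{fig_TA_reduced}, not reduction-confluent, the only obstruction being that a subgraph isomorphic to the base graph of $\mathcal{A}$ carrying a pendant vertex admits two incompatible reductions. Following \cref{SUB non conf}, I would augment the reduction system of \cref{SUB reduction systems} with the extra graph reduction of \cref{fig_Airplane_graph_reduction}, verify that the augmented system is still terminating and now locally confluent (hence confluent, so that each connected component has a unique reduced graph), and correspondingly introduce a new variant of Type 3 reduction on closed strand diagrams, taking looping strands of the shape in the bottom-left of \cref{fig_TA_reduced} to one of the shape in the bottom-right. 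I would then re-run the proofs of \cref{LEM reduced CSD}, \cref{PROP conjugator} and \cref{PROP conjugate} with this enlarged set of moves. Two things must be checked along the way: first, that the new graph reduction equals a red expansion followed by a blue reduction, so that it preserves the equivalence relation generated by the rewriting; and second, that the new Type 3 reduction corresponds, on open strand diagrams, to conjugation by the explicit diagram of \cref{fig_Airplane_conj} (one such diagram per unit of winding number), the bullet to be appended to the list at \cpageref{TXT transformations to conjugacy}. Granting these, the analogues of \cref{LEM reduced CSD} and \cref{PROP conjugate} go through for $T_A$, and they solve both the conjugacy problem and the conjugacy search problem there.

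The step I expect to be the main obstacle is not any single one of these verifications in isolation but the book-keeping in the confluence arguments: proving local confluence of the (possibly augmented) reduction systems, and, for $T_A$, of the closed-diagram rewriting system once the new Type 3 move has been added, requires ruling out every bad overlap between the old and new moves, in the spirit of the proof of \cref{LEM reduced CSD}, with the confluence of the augmented graph reduction system feeding back in exactly as it does there. That is the step I would write out with the most care; everything else is a direct application of results already established in the paper.
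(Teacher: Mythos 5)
Your proposal is correct and follows essentially the same route as the paper: reduction-confluence (checked via Newman's Diamond Lemma, as at the Basilica discussion) plus \cref{THM conjugate} and its Corollary for all the listed groups except $T_A$, and for $T_A$ the augmented reduction system of \cref{fig_Airplane_graph_reduction} with the corresponding new Type 3 move and conjugator diagram of \cref{fig_Airplane_conj}, re-running \cref{LEM reduced CSD}, \cref{PROP conjugator} and \cref{PROP conjugate}. The verifications you single out (termination, local confluence of the augmented systems, and that the new reduction preserves the rewriting equivalence) are exactly the ones the paper relies on.
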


Instead, this does not seem to immediately apply to replacement rules such as the one portrayed in \cref{fig replacement Bad}, as adding the obvious reduction depicted in \cref{fig replacement Bad_conf} creates new non-confluent paths of reductions.
It may still be possible that adding a large (but finite) amount of new reductions might solve this.
It should also be noted that the rearrangement group for this specific replacement system is a diagram group, thus the conjugacy problem is solvable if and only if the semigroup $\langle r,b \mid r = r b b r, b = b r r b \rangle$ ($r$ and $b$ standing for red and blue, respectively) has solvable word problem, by a result in \cite{guba1997diagram}.

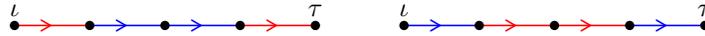
\begin{figure}\centering
\begin{subfigure}{.4\textwidth}\centering
    \begin{tikzpicture}[scale=1]
        \draw[->-=.5,red] (0,0) node[black,circle,fill,inner sep=1.25]{} node[above,black]{$\iota$} -- (1,0);
        \draw[->-=.5,blue] (1,0) node[black,circle,fill,inner sep=1.25]{} -- (2,0);
        \draw[->-=.5,blue] (2,0) node[black,circle,fill,inner sep=1.25]{} -- (3,0);
        \draw[->-=.5,red] (3,0) node[black,circle,fill,inner sep=1.25]{} -- (4,0) node[black,circle,fill,inner sep=1.25]{} node[above,black]{$\tau$};
    \end{tikzpicture}
\end{subfigure}
\begin{subfigure}{.4\textwidth}\centering
    \begin{tikzpicture}[scale=1]
        \draw[->-=.5,blue] (0,0) node[black,circle,fill,inner sep=1.25]{} node[above,black]{$\iota$} -- (1,0);
        \draw[->-=.5,red] (1,0) node[black,circle,fill,inner sep=1.25]{} -- (2,0);
        \draw[->-=.5,red] (2,0) node[black,circle,fill,inner sep=1.25]{} -- (3,0);
        \draw[->-=.5,blue] (3,0) node[black,circle,fill,inner sep=1.25]{} -- (4,0) node[black,circle,fill,inner sep=1.25]{} node[above,black]{$\tau$};
    \end{tikzpicture}
\end{subfigure}
\caption{The \textcolor{red}{red} and \textcolor{blue}{blue} replacement graphs for a non confluent-reduction set of replacement rules where closed strand diagrams do not seem to easily solve the conjugacy problem.}
\label{fig replacement Bad}
\end{figure}

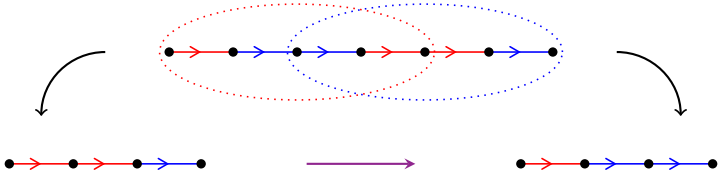
\begin{figure}\centering
\begin{tikzpicture}[scale=.85]
    \begin{scope}[xshift=-3cm]
    \draw[->-=.5,red] (0,0) node[black,circle,fill,inner sep=1.25]{} -- (1,0);
    \draw[->-=.5,blue] (1,0) node[black,circle,fill,inner sep=1.25]{} -- (2,0);
    \draw[->-=.5,blue] (2,0) node[black,circle,fill,inner sep=1.25]{} -- (3,0);
    \draw[->-=.5,red] (3,0) node[black,circle,fill,inner sep=1.25]{} -- (4,0);
    \draw[->-=.5,red] (4,0) node[black,circle,fill,inner sep=1.25]{} -- (5,0);
    \draw[->-=.5,blue] (5,0) node[black,circle,fill,inner sep=1.25]{} -- (6,0) node[black,circle,fill,inner sep=1.25]{};
    \draw[red,dotted] (2,0) ellipse (2.15cm and .75cm);
    \draw[thick,-to] (-1,0) to[out=180,in=90] (-2,-1);
    \draw[blue,dotted] (4,0) ellipse (2.15cm and .75cm);
    \draw[thick,-to] (7,0) to[out=0,in=90] (8,-1);
    \end{scope}
    \begin{scope}[xshift=-5.5cm,yshift=-1.75cm]
    \draw[->-=.5,red] (0,0) node[black,circle,fill,inner sep=1.25]{} -- (1,0);
    \draw[->-=.5,red] (1,0) node[black,circle,fill,inner sep=1.25]{} -- (2,0);
    \draw[->-=.5,blue] (2,0) node[black,circle,fill,inner sep=1.25]{} -- (3,0) node[black,circle,fill,inner sep=1.25]{};
    \end{scope}
    \begin{scope}[xshift=2.5cm,yshift=-1.75cm]
    \draw[->-=.5,red] (0,0) node[black,circle,fill,inner sep=1.25]{} -- (1,0);
    \draw[->-=.5,blue] (1,0) node[black,circle,fill,inner sep=1.25]{} -- (2,0);
    \draw[->-=.5,blue] (2,0) node[black,circle,fill,inner sep=1.25]{} -- (3,0) node[black,circle,fill,inner sep=1.25]{};
    \end{scope}
    \draw[Plum,thick,-stealth] (-.85,-1.75) -- (.85,-1.75);
\end{tikzpicture}
\caption{Adding \textcolor{Plum}{this reduction} to the reduction system associated to the replacement rules of \cref{fig replacement Bad} does not make the reduction system confluent, differently from how the new reduction of \cref{fig_Airplane_graph_reduction} worked for the Airplane reduction system.}
\label{fig replacement Bad_conf}
\end{figure}

\subsection{Graph Rewriting Systems and Confluence}
\label{SUB DPO}

Replacement rules (\cref{DEF replacement}), reduction systems (\cref{SUB reduction systems}) and the modified reduction system discussed for the Airplane from the previous \cref{SUB non conf} are all instances of \textbf{Graph Rewriting Systems}, which is a specific instance of abstract rewriting systems whose objects are graphs.
The modern DPO (double pushout) approach to graph rewriting systems was first introduced in \cite{4569741}, but the precise definition may vary, as explained and investigated in \cite{habel_müller_plump_2001}.

An application of this general theory is the Critical Pair Lemma, which is Lemma 10 from \cite{Plump2005}.
It states that it suffices to check specific pairs of reductions (called \textit{critical pairs}) in order to establish that a graph rewriting system is locally confluent (which implies being confluent if it is terminating).
Our proof that the Basilica replacement system is reduction-confluent (\cpageref{TXT Basilica Vicsek BubbleBath Houghton QV}) can actually be seen as an instance of this Lemma.
However, the Critical Pair Lemma does not provide a necessary condition, and in general the problem of deciding whether a graph rewriting system is confluent is undecidable (Theorem 5 of \cite{Plump2005}).
Nevertheless, since reductions of expanding replacement systems are a very specific and arguably quite simple family of graph rewriting systems, it may very well be that checking for confluence in this context is instead decidable.
Checking whether this is the case or not is an interesting question that we will not tackle here.

Finally, what we did in the previous \cref{SUB non conf} to solve the conjugacy problem for the Airplane replacement system can be called a \textit{confluent completion} of the reduction system: we have added new reductions that make the reduction system confluent, while preserving termination and the equivalence generated by the rewriting.
In the different context of \textit{term rewriting systems}, the Knuth-Bendix completion algorithm (\cite{Knuth1983}) does exactly what we just described.
However, such an algorithm in the larger context of graph rewriting systems has not been developed yet, and is currently of interest to the community of graph rewriting systems (as stated in Section 5 of \cite{Plump2005}).
So it is possible that some future application of the theory of graph rewriting systems will apply to reduction systems of replacement systems, possibly providing a wider sufficient condition for the feasibility of the method presented here to solve the conjugacy problem in rearrangement groups.
As far as the author knows, if a completion algorithm could always be produced for the reduction system of any replacement system, then the conjugacy (search) problem in every rearrangement group would be solved with this approach based on building confluent completions that allow algorithmic comparisons of closed strand diagrams.

\printbibliography[heading=bibintoc]

\end{document}